\theoremstyle{plain}
\makeatletter \@addtoreset{equation}{section}
\newtheorem{theorem}{Theorem}[section]
\newtheorem*{theorem*}{Theorem}
\newtheorem{corollary}[theorem]{Corollary}
\newtheorem{definition}[theorem]{Definition}
\newtheorem{proposition}[theorem]{Proposition}
\newtheorem*{Proposition}{Proposition}
\newtheorem{remark}[theorem]{Remark}
\newtheorem{example}[theorem]{Example}
\def\pr1{\prod\hskip -2.07ex * \hskip 0.9 ex}
\begin{document}
\noindent
\title{Zero sets and  Nullstellensatz type theorems for slice regular quaternionic polynomials}
\author{Anna Gori $^1$} 
 \thanks{$^1$ Dipartimento di Matematica - Universit\`a di Milano,                
 Via Saldini 50, 20133  Milano, Italy ({anna.gori@unimi.it})}
 \author{Giulia Sarfatti $^2$}
 \thanks{ $^2$ DIISM - Universit\`a Politecnica delle Marche,               
 	Via Brecce Bianche 12,  60131, Ancona, Italy ({g.sarfatti@univpm.it})}
 \author{Fabio Vlacci $^3$}
 \thanks{$^3$ DiSPeS  - Universit\`a di Trieste, Piazzale Europa 1, 34100, Trieste, Italy ({fvlacci@units.it})}

 \begin{abstract}
   We study the vanishing sets of slice regular polynomials in several
   quaternionic variables.  
  We obtain a geometric description of the vanishing sets in two variables, which leads to a
   new version of the {\em Strong} Hilbert Nullstellensatz in the quaternionic setting.

 \end{abstract}
\keywords{Nullstellensatz, quaternionic slice regular polynomials, vanishing sets
	\\
{\bf MSC:} 30G35, 16S36} 
\maketitle
\noindent
\section*{Acknowledgments}
 The authors are partially supported by:
GNSAGA-INdAM via the project ``Hypercomplex function theory and applications''; by MUR, via the project Finanziamento Premiale FOE 2014 ``Splines for accUrate NumeRics: adaptIve models for Simulation Environments''; the first author is also partially supported by MUR project PRIN 2022 ``Real and Complex Manifolds: Geometry and Holomorphic Dynamics'', the second and the third authors are also partially supported by MUR project PRIN 2022 ``Interactions between Geometric Structures
and Function Theories''.
\section{Introduction}

The so called {\em Weak} Hilbert  Nullstellensatz can be regarded as a generalization of
the Fundamental Theorem of Algebra to the case of polynomials in
several complex variables, as pointed out in \cite{manetti}.   It has also a {\em Strong} formulation which
provides a correspondence between radical ideals and vanishing sets of polynomials. This theorem represents a central tool in many active research fields in
Mathematics, and especially in Algebraic Geometry. 
In the complex setting these two versions of the Hilbert Nullstellesantz, are actually equivalent thanks to the fact that point-evaluation of a
polynomial is a ring homomorphism of complex polynomials.

The Hilbert Nullstellensatz is generally
stated in the framework of algebraically closed fields, but in recent
times some new interest has been addressed to a
formalization of the Nullstellensatz in a noncommutative setting
(see, e.g., \cite{israeliani-1, israeliani, BR}).  In particular, Alon
and Paran in \cite{israeliani-1} proved both a Weak and a Strong
version of the Nullstellensatz in the ring $\mathbb{H}[x_1,\ldots,x_n]$ of quaternionic polynomials with {\em central}
variables, i.e. such that $x_\ell x_m=x_m x_\ell$  for all $\ell,m=1\dots n$. 
We remark here that, in this framework, the equivalence of the two versions of the Nullstellensatz cannot be shown using the point-evaluation of polynomials since it is not a ring homomorphism in general.  

In the last decade the theory of slice regular 
quaternionic functions has proved to be central for the development of
the study of quaternionic maps which resemble the main properties of
holomorphic functions in the complex setting (see,
e.g., \cite{ libroGSS,GS}).  Furthermore, even though an
analog of the Fundamental Theorem of Algebra does not hold in general
for polynomials over the quaternions, in \cite{GSV} a positive result
in this sense has been obtained for slice regular polynomials over
quaternions and octonions.  Thus it is quite natural to look for a
version of the Nullstellensatz in the framework of slice regular
polynomials  in $n$ quaternionic variables $\mathbb{H}[q_1, \ldots, q_n]$.\\ It is possible to
establish an isomorphism $\varphi$ between the ring $\mathbb{H}[q_1, \ldots, q_n]$ equipped with the so called {\em $*$-product}
(see Section \ref{sec1}) and the ring of 
polynomials in several {central} variables $\mathbb{H}[x_1, \ldots, x_n]$ equipped with the standard
pointwise product, considered in \cite{israeliani-1}. Thanks to the isomorphism $\varphi$, it is immediate to rephrase in our setting the weak version of the quaternionic Hilbert Nullstellensatz proved in \cite{israeliani-1}.
However, the established isomorphism cannot be directly used to study the vanishing sets of polynomials. Indeed,
an element in $\mathbb{H}[x_1, \ldots, x_n]$ can be evaluated as a
function only on $n$-tuples $(a_1,\ldots,a_n)\in\mathbb{H}^n$ with
commuting components $a_la_m=a_ma_l$ for any $l,m=1,\ldots,n$ (a
nowhere dense subset of $\mathbb{H}^n$)
%
%
whereas the corresponding regular polynomial function is well defined in $\mathbb{H}^n$.

In order to study the zero set of slice regular polynomials, we begin by focusing our attention to their factorization properties in terms of the {\em $*$-product}; in particular we prove

	\begin{Proposition}
	Let $P\in \mathbb{H}[q_1, \ldots, q_n]$
	be a slice regular polynomial in $n$ variables and let $1\le m \le n$. Then $P$ vanishes on $\mathbb{H}^{m-1}\times\{a\}\times (C_a)^{n-m}$
if and only if there exists $P_m\in\mathbb{H}[q_1, \ldots, q_n]$ such that 
\[P(q_1, \ldots, q_n)=(q_m-a)*P_m(q_1, \ldots, q_n),
\]	
where $C_a:=\{q\in\mathbb{H}\ |\ aq=qa\}$.
        \end{Proposition}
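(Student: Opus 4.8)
The strategy is to reduce everything to a division with remainder performed through the ring isomorphism $\varphi$ of Section~\ref{sec1}, together with an identity principle for slice regular polynomials. Under $\varphi$ the $*$-product becomes the ordinary product of $\mathbb{H}[x_1,\dots,x_n]$ and $x_m$ is a \emph{central} variable there; since $x_m-a$ is monic of degree one in $x_m$, one may divide $\varphi(P)$ on the left by it and obtain $\varphi(P)=(x_m-a)\,\varphi(P_m)+\varphi(\rho)$ for some $P_m$ and some $\rho$ that does not involve $x_m$. Undoing $\varphi$ this reads $P=(q_m-a)*P_m+\rho$ with $\rho\in\mathbb{H}[q_1,\dots,q_{m-1},q_{m+1},\dots,q_n]$. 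So the proposition will follow from two facts: first, that $(q_m-a)*P_m$ \emph{always} vanishes on $\Sigma:=\mathbb{H}^{m-1}\times\{a\}\times(C_a)^{n-m}$ (which already gives the ``if'' direction); and second, that a slice regular polynomial in the variables $q_1,\dots,q_{m-1},q_{m+1},\dots,q_n$ vanishing on $\mathbb{H}^{m-1}\times(C_a)^{n-m}$ is the zero polynomial. Granting these, for the ``only if'' direction one notes that $\rho$ is constant in $q_m$, so on $\Sigma$ one has $P=\rho$; hence if $P\equiv0$ on $\Sigma$ then $\rho$ vanishes on $\mathbb{H}^{m-1}\times(C_a)^{n-m}$, so $\rho\equiv0$ and $P=(q_m-a)*P_m$.

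For the first fact I would expand $(q_m-a)*P_m$ in the monomial basis rather than try to ``evaluate the factorization'' (point-evaluation being far from a ring homomorphism here). If $P_m$ corresponds to $\sum_\beta q^{\beta}d_\beta$ with $q^\beta=q_1^{\beta_1}\cdots q_n^{\beta_n}$ and coefficients on the right, then a short computation with $\varphi$ gives $(q_m-a)*P_m=\sum_\beta q^{\beta+e_m}d_\beta-\sum_\beta q^{\beta}(a\,d_\beta)$, where $e_m$ is the $m$-th unit multi-index. Evaluating at a point of $\Sigma$, that is with $q_m=a$ and with $q_j\in C_a$ for every $j>m$, the $\beta$-th monomial of the first sum carries an extra factor $a$ coming from $q_m^{\beta_m+1}$; this $a$ commutes with the powers of $a$ to its left and with the elements of $C_a$ to its right, so it can be slid rightwards until it stands immediately before $d_\beta$, turning the $\beta$-th term of the first sum into the $\beta$-th term of the second. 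The two sums therefore cancel term by term, and $(q_m-a)*P_m$ vanishes on $\Sigma$. This is exactly where the hypothesis $q_j\in C_a$ for $j>m$ is used, and it is the technical heart of the argument.

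For the second fact I would peel off one variable at a time and invoke the one-variable theory. In the monomial basis (coefficients on the right) one checks that specializing all the \emph{non-leading} variables to fixed values turns a slice regular polynomial into a one-variable slice regular polynomial in the leading variable, whose coefficients are values of slice regular polynomials in the remaining variables; by the one-variable identity principle, vanishing on all of $\mathbb{H}$ forces those coefficients to vanish. Iterating over the ``free'' variables $q_1,\dots,q_{m-1}$ (which precede $q_{m+1},\dots,q_n$ in the chosen order), the problem reduces to showing that a slice regular polynomial $Q$ in $q_{m+1},\dots,q_n$ vanishing on $(C_a)^{n-m}$ is zero. If $a\in\mathbb{R}$ then $C_a=\mathbb{H}$, so $Q$ vanishes everywhere and is zero. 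If $a\notin\mathbb{R}$ then $C_a$ is the slice $\mathbb{C}_{i_a}$; restricting $Q$ there all these variables commute, and writing $\mathbb{H}=\mathbb{C}_{i_a}\oplus\mathbb{C}_{i_a}J$ for an imaginary unit $J$ with $Ji_a=-i_aJ$, one sees that $Q|_{(\mathbb{C}_{i_a})^{n-m}}=P_1+P_2\,J$ with $P_1,P_2$ ordinary polynomials with coefficients in $\mathbb{C}_{i_a}$ in $n-m$ commuting complex variables; vanishing identically forces $P_1\equiv P_2\equiv0$, hence all coefficients of $Q$ vanish. Unwinding the peeling gives $\rho\equiv0$, completing the proof.

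The main obstacle is precisely the tension between the two pictures: the factorization lives naturally in the central-variable ring $\mathbb{H}[x_1,\dots,x_n]$, where division by $x_m-a$ is available, but the vanishing condition concerns the genuinely noncommutative point-evaluation of the slice regular polynomial, which is not multiplicative. Consequently the vanishing of $(q_m-a)*P_m$ on $\Sigma$ cannot be read off from the factored form and must be verified directly on the coefficient expansion, and it is there that the centralizer hypothesis on the last $n-m$ slots enters. A secondary, purely bookkeeping subtlety is that, because coefficients sit on one fixed side of the monomials, a restriction stays a one-variable slice regular polynomial only when the specialized variables are all the non-leading ones, so the peeling in the identity-principle step has to be organized in that order.
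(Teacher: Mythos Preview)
Your argument is correct and follows the same three-step structure as the paper: Euclidean division by $q_m-a$ to obtain $P=(q_m-a)*P_m+\rho$ with $\rho$ independent of $q_m$; direct term-by-term verification in the monomial basis that $(q_m-a)*P_m$ vanishes on $\Sigma$ by sliding the extra factor $a$ past the $C_a$-valued variables; and an identity principle on $\mathbb{H}^{m-1}\times(C_a)^{n-m}$ to kill $\rho$. The only differences are cosmetic: the paper performs the division directly in $(\mathbb{H}[q_1,\dots,q_n],*)$ (its Proposition~\ref{euclidean}) rather than through $\varphi$---note that $\varphi$ is an \emph{anti}-isomorphism sending $a\mapsto\bar a$, so your displayed formula $\varphi(P)=(x_m-a)\varphi(P_m)+\varphi(\rho)$ should read $\varphi(P)=\varphi(P_m)(x_m-\bar a)+\varphi(\rho)$, though this does not affect the conclusion---and the paper simply invokes the Identity Principle (Theorem~\ref{Id}) rather than reproving it by your variable-peeling and splitting over $\mathbb{C}_{J_a}\oplus\mathbb{C}_{J_a}J$.
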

 While polynomials vanishing at points with commuting components are studied in \cite{israeliani-1}, we obtain the following  result for slice regular polynomials vanishing at generic points.
 
\begin{Proposition}
	A slice regular polynomial $P\in
	\mathbb{H}[q_1,\ldots,q_n]$ vanishes at $(a_1,\ldots,a_n)\in \mathbb{H}^n$ if and only if there exist
	$P_k\in \mathbb H[q_1,\ldots,q_k]$ for any $k=1,\ldots,n$ such that
	\begin{equation}
P(q_1,\ldots,q_n)=\sum_{k=1}^n(q_k-a_k)*P_k(q_1, \ldots,q_k).
	\end{equation}	
\end{Proposition}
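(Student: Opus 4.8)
The plan is to deduce the ``if'' implication directly from the preceding Proposition and to prove the ``only if'' implication by induction on the number of variables, peeling off one linear factor at a time by division.

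For the ``if'' direction I would use that point-evaluation of slice regular polynomials, although not multiplicative, is $\mathbb{R}$-linear; hence it is enough to check that each summand $(q_k-a_k)*P_k(q_1,\ldots,q_k)$ vanishes at $(a_1,\ldots,a_n)$. Since $P_k$ involves only $q_1,\ldots,q_k$ and the inclusion $\mathbb{H}[q_1,\ldots,q_k]\hookrightarrow\mathbb{H}[q_1,\ldots,q_n]$ is compatible with the $*$-product, the polynomial $(q_k-a_k)*P_k$ again involves only $q_1,\ldots,q_k$, so its value at $(a_1,\ldots,a_n)$ coincides with its value at $(a_1,\ldots,a_k)$. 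Applying the previous Proposition in $k$ variables with $m=k$ (so that no $C_a$ factor is present) shows that $(q_k-a_k)*P_k$ vanishes on $\mathbb{H}^{k-1}\times\{a_k\}$, hence at $(a_1,\ldots,a_k)$; summing over $k$ gives $P(a_1,\ldots,a_n)=0$.

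For the ``only if'' direction I would induct on $n$. The case $n=1$ is the classical statement that a one-variable slice regular polynomial vanishing at $a_1$ is left divisible by $q_1-a_1$ (equivalently, the case $n=m=1$ of the preceding Proposition). For the inductive step, let $P\in\mathbb{H}[q_1,\ldots,q_n]$ vanish at $(a_1,\ldots,a_n)$. Regarding $\mathbb{H}[q_1,\ldots,q_n]$ with the $*$-product as the polynomial ring over $\mathbb{H}[q_1,\ldots,q_{n-1}]$ in the central indeterminate $q_n$ --- which is exactly what the isomorphism $\varphi$ with the ring of polynomials in central variables provides --- I would carry out left division by the monic polynomial $q_n-a_n$ and write
\[
P(q_1,\ldots,q_n)=(q_n-a_n)*P_n(q_1,\ldots,q_n)+R_n(q_1,\ldots,q_{n-1}),
\]
with $P_n\in\mathbb{H}[q_1,\ldots,q_n]$ and with a remainder $R_n\in\mathbb{H}[q_1,\ldots,q_{n-1}]$ having $q_n$-degree zero. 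By the preceding Proposition with $m=n$, the term $(q_n-a_n)*P_n$ vanishes on $\mathbb{H}^{n-1}\times\{a_n\}$, hence at $(a_1,\ldots,a_n)$; since evaluation is additive and $P$ vanishes there, this forces $R_n(a_1,\ldots,a_{n-1})=0$. The inductive hypothesis then gives $P_k\in\mathbb{H}[q_1,\ldots,q_k]$ for $k=1,\ldots,n-1$ with $R_n=\sum_{k=1}^{n-1}(q_k-a_k)*P_k$, and substituting yields the claimed decomposition of $P$.

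The step I expect to be the main obstacle is the division: one must be sure that left division of $P$ by $q_n-a_n$ with respect to the $*$-product really terminates with a remainder free of $q_n$, so that the induction can be fed back into it. This is precisely the place where the centrality of the variables in the $*$-product ring is essential; transporting the situation through $\varphi$ to $\mathbb{H}[x_1,\ldots,x_n]$, the usual degree-lowering argument for division by a monic polynomial in the last variable applies, with only the minor care of extracting the factor on the left so as to match the form in the statement. All the rest reduces to the preceding Proposition together with the additivity --- but not multiplicativity --- of point-evaluation.
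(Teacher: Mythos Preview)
Your proposal is correct and follows essentially the same approach as the paper. The paper's proof unrolls your induction explicitly: it successively $*$-divides by $q_n-a_n$, then the remainder by $q_{n-1}-a_{n-1}$, and so on down to a one-variable remainder $R_2(q_1)$, then invokes Proposition~\ref{mlineare} to see that evaluation at $(a_1,\ldots,a_n)$ kills all the $(q_k-a_k)*P_k$ terms, forcing $R_2(a_1)=0$ and hence the final factorization via the one-variable theory. The only cosmetic difference is that the paper cites its own Euclidean division result (Proposition~\ref{euclidean}) directly rather than detouring through the isomorphism $\varphi$; since that proposition is already available, there is no need to transport the problem to $\mathbb{H}[x_1,\ldots,x_n]$.
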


 Let $I$ be a right ideal  in $\mathbb{H}[q_1,\ldots,q_n]$; we
define $\mathcal{V}(I)$ to be the set of common zeros of polynomials
in $I$.  Let $Z$ be 
a subset of $\mathbb{H}^n$, 
we denote by
$\mathcal{I}(Z)$ the right ideal given by the intersection,  for ${(a_1,\ldots,a_n)\in Z},$ of the right ideals $\mathcal{I}_{(a_1,\ldots,a_n)}$ generated, via the
$*$-product, by $q_1-a_1, q_2-a_2, \ldots,q_n-a_n$ in $\mathbb{H}[q_1,\ldots, q_n]$. \\We point out that, in the
quaternionic setting, $\mathcal{I}({Z})$ does not always coincide
with the set of polynomials vanishing on $Z$.  Indeed the set of polynomials whose zero locus is  $Z$ is not an ideal, in general.  So it
is natural to consider also the ideal $\mathcal{J}(Z)$ generated by
polynomials vanishing on $Z$;  we then   investigate the relations of $\mathcal{J}(Z)$ with
$\mathcal{I}(Z)$.
The two sets $\mathcal{I}(Z)$ and $\mathcal{J}(Z)$ coincide, for instance,  when $Z$ consists only of points
with commuting components (this is the case considered  in \cite{israeliani-1}). 
In the general case, we can show the inclusion $\mathcal{J}(Z)\subseteq \mathcal{I}(Z)$.

\noindent After introducing the radical of an ideal $I$ as the
intersection of all {\em completely prime} ideals containing $I$, 
we can rephrase
the Strong version of the Nullstellensatz proved in \cite{israeliani-1} in 
the setting of slice regular polynomials.

In the two variable case, 
we  show some relevant geometric properties of the 
 vanishing set of an ideal in  $\mathbb{H}[q_1,q_2]$.
  Among other results, we prove 
  \begin{Proposition}
 A slice regular polynomial $P\in \mathbb{H}[q_1,q_2]$
vanishes on $\{a\}\times \mathbb{H}$ if and only if   there exists a polynomial $ Q\in\mathbb{H}[q_1]$
such that $P(q_1,q_2)=(q_1-a)*Q(q_1)$. 
\end{Proposition}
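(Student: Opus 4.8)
The plan is to prove the two implications separately; the content is in the ``only if'' part. For ``if'', suppose $P(q_1,q_2)=(q_1-a)*Q(q_1)$ with $Q\in\mathbb H[q_1]$: then $P$ does not involve $q_2$, so for every $b\in\mathbb H$ the value $P(a,b)$ is the value at $q_1=a$ of the one-variable polynomial $(q_1-a)*Q(q_1)$, which is $0$ because its left factor vanishes at $a$; hence $P$ vanishes on $\{a\}\times\mathbb H$.

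For ``only if'', assume $P$ vanishes on $\{a\}\times\mathbb H$. Since $\{a\}\times\mathbb H\supseteq\{a\}\times C_a$, the first Proposition above (with $n=2$, $m=1$) gives $P_1\in\mathbb H[q_1,q_2]$ with $P=(q_1-a)*P_1$, and $P_1$ is uniquely determined because $\mathbb H[q_1,q_2]$ is a domain. The whole point is then to show that $P_1$ does not involve $q_2$, i.e.\ $P_1\in\mathbb H[q_1]$, so that $Q:=P_1$ works. Write $P_1=\sum_{j=0}^{d}B_j(q_1)*q_2^{\,j}$ with $B_j\in\mathbb H[q_1]$, whence
\[
P(q_1,q_2)=\sum_{j=0}^{d}\bigl((q_1-a)*B_j(q_1)\bigr)*q_2^{\,j},
\]
and suppose for contradiction that $d\ge1$ with $B_d\ne0$; note that each $(q_1-a)*B_j$ vanishes at $q_1=a$.

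The core of the argument is to contradict this using $P(a,b)=0$ for \emph{all} $b\in\mathbb H$. Here one must use the evaluation of a $*$-product of slice regular polynomials in two variables --- which, unlike the one-variable case, is not plain substitution --- and the fact that, for $a\notin\mathbb R$, $a$ does not commute with a generic $b$. Specializing $q_1$ to $a$, the $j$-th summand above contributes a quaternion depending $\mathbb R$-linearly on the coefficients of $B_j$, multiplied by a fixed function of $b$ governed by the commutator of $a$ with $b^{\,j}$. Restricting $b$ to a complex slice $\mathbb C_J$ not containing $a$ and using that the functions $b\mapsto b^{\,j}$, $j=1,\dots,d$, are $\mathbb R$-linearly independent on $\mathbb C_J$ --- and then letting $J$ vary so that the commutator factors sweep out a full $2$-plane of directions --- one forces all coefficients of $B_1,\dots,B_d$ to vanish, contradicting $B_d\ne0$. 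Hence $d=0$ and $P=(q_1-a)*B_0(q_1)$ with $B_0\in\mathbb H[q_1]$.

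The delicate point, and the main obstacle, is exactly this last step: writing down and manipulating the two-variable $*$-evaluation at $(a,b)$ with $b$ not commuting with $a$, and carrying out the linear-independence ``deconvolution'' carefully enough to conclude that each $B_j$ with $j\ge1$ vanishes \emph{identically}, not merely at $q_1=a$. An alternative route for this step is to apply the second Proposition above at every point $(a,b)$, so that $P\in\mathcal I_{(a,b)}$ for all $b$ and hence $P\in\bigcap_{b\in\mathbb H}\mathcal I_{(a,b)}$, and then to show directly that this intersection of right ideals equals $(q_1-a)*\mathbb H[q_1]$.
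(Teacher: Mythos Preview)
Your ``if'' direction is fine. The ``only if'' direction, however, cannot be completed---and this is not a defect of your strategy but of the statement itself. Take $a=i$ and $P(q_1,q_2)=(q_1^2+1)\,q_2=q_1^2q_2+q_2$. Then $P(i,q_2)=-q_2+q_2=0$ for every $q_2\in\mathbb H$, so $P$ vanishes on $\{i\}\times\mathbb H$; yet $P$ genuinely depends on $q_2$ and hence cannot equal $(q_1-i)*Q(q_1)$ for any $Q\in\mathbb H[q_1]$. In your notation one finds $P_1(q_1,q_2)=(q_1+i)*q_2$, i.e.\ $B_0=0$ and $B_1(q_1)=q_1+i\not\equiv0$.

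Your ``deconvolution'' step is exactly where this shows up. If you carry out the evaluation honestly, writing $a=a_0+Ja_1$, $b=x+Ky$, $a^n=u_n+Jv_n$, $b^m=\alpha_m+K\beta_m$, you obtain
\[
P(a,b)=2a_1\,(J\wedge K)\sum_{m\ge1}\beta_m\Bigl(\sum_n \bar a^{\,n}c_{n,m}\Bigr)
=2a_1\,(J\wedge K)\sum_{m\ge1}\beta_m\,B_m(\bar a),
\]
so the vanishing of $P(a,b)$ for all $b$ yields only $B_m(\bar a)=0$ for $m\ge1$, not $B_m\equiv0$. The correct factorization one can extract is
\[
P(q_1,q_2)=(q_1-a)*Q(q_1)+\bigl(q_1^2-2\,\mathrm{Re}(a)\,q_1+|a|^2\bigr)*R(q_1,q_2)
\]
with $Q\in\mathbb H[q_1]$, $R\in\mathbb H[q_1,q_2]$; the counterexample corresponds to $Q=0$, $R=q_2$.

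For comparison, the paper's own argument is organized differently: it writes $P=\sum_n q_1^nP_n(q_2)$ and tries to deduce iteratively that each $P_n$ is constant from the identity $\sum_n a^nP_n(q_2)\equiv0$, using that left multiplication by $a\notin\mathbb R$ destroys slice regularity. That step is valid when applied to a single slice regular factor, but the paper applies it to the non--slice-regular combination $P_1+aP_2+\cdots+a^{N-1}P_N$, and the implication fails there (in the counterexample this combination is $iq_2$, and $-i\cdot(iq_2)=q_2$ \emph{is} slice regular without $iq_2$ being constant). So both your approach and the paper's stall at the same place, for the same underlying reason.
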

If  $a\in\mathbb{H}\setminus\mathbb{R}$, we define  $\mathbb{S}_a:=\{cac^{-1} \ :\ |c|=1\} $. 
 Then
\begin{Proposition}
A slice regular  polynomial $P\in \mathbb{H}[q_1,q_2]$ vanishes on $\mathbb{S}_a\times \{b\}$ if and only
	if there exist $P_1\in \mathbb{H}[q_1]$ and $P_2\in \mathbb{H}[q_1,q_2]$ such that
	\begin{align*}
	P(q_1,q_2)&=(q_1^2-2{\rm Re}(a)q_1+|a|^2)*P_1(q_1)+(q_2-b)*P_2(q_1,q_2) 
\end{align*}	
\end{Proposition}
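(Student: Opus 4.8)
The plan is to prove both implications, the backward one being a routine computation and the forward one requiring the structure theory already developed in the excerpt. For the backward direction, observe that $q_1^2-2\,{\rm Re}(a)q_1+|a|^2$ is the minimal polynomial of the conjugacy class $\mathbb{S}_a$, so it vanishes identically on $\mathbb{S}_a\times\mathbb{H}$; consequently the first summand vanishes on $\mathbb{S}_a\times\{b\}$ (indeed on $\mathbb{S}_a\times\mathbb{H}$), and the second summand vanishes on $\mathbb{H}\times\{b\}$ because $(q_2-b)*P_2$ evaluated at a point with second coordinate $b$ is zero (here one must be slightly careful, since $*$-evaluation is not pointwise multiplication, but the factor $q_2-b$ on the left gives $0$ when $q_2=b$ regardless of the value of the first slot; this follows from the same argument used in the first Proposition of the excerpt, applied with $m=n=2$ and $a=b$). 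Hence $P$ vanishes on $\mathbb{S}_a\times\{b\}$.

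For the forward direction, suppose $P\in\mathbb{H}[q_1,q_2]$ vanishes on $\mathbb{S}_a\times\{b\}$. First I would fix $q_2=b$ and regard $P(q_1,b)$ as a one-variable slice regular polynomial in $q_1$; but rather than substituting naively I would use the first Proposition (with $m=n=2$, $a$ replaced by $b$): since $P$ vanishes on $\mathbb{H}\times\{b\}\supseteq\mathbb{S}_a\times\{b\}$ is \emph{not} assumed, I instead apply the Proposition in the other slot. Concretely, the clean route is: because $P$ vanishes on $\mathbb{S}_a\times\{b\}$, in particular it vanishes at the single point $(a,b)$ and, more importantly, at every point $(cac^{-1},b)$. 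The key step is to peel off the $q_2$-direction first. Write, using the division algorithm for the $*$-product in the variable $q_2$,
\[
P(q_1,q_2)=(q_2-b)*P_2(q_1,q_2)+R(q_1),
\]
where $R\in\mathbb{H}[q_1]$ is the $*$-remainder (independent of $q_2$). The second summand vanishes on $\mathbb{H}\times\{b\}$, hence on $\mathbb{S}_a\times\{b\}$, so $R$ must vanish on all of $\mathbb{S}_a$. Now $R$ is a one-variable slice regular polynomial vanishing on the whole conjugacy class $\mathbb{S}_a$; by the well-known one-variable theory (a slice regular polynomial vanishing on a $2$-sphere $\mathbb{S}_a$ is left-divisible by the real quadratic factor $q_1^2-2\,{\rm Re}(a)q_1+|a|^2$, which is its \emph{companion} or \emph{characteristic} polynomial), there exists $P_1\in\mathbb{H}[q_1]$ with $R(q_1)=(q_1^2-2\,{\rm Re}(a)q_1+|a|^2)*P_1(q_1)$. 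Substituting back gives exactly the claimed decomposition.

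The main obstacle I anticipate is justifying the existence and the properties of the $*$-remainder $R$: one must check that $R$ is genuinely independent of $q_2$ and that the "evaluation at $q_2=b$ kills $(q_2-b)*P_2$" step is valid despite the noncommutativity of $*$-evaluation — this is precisely the content (in two variables, second slot) of the first Proposition quoted in the excerpt, so I would cite it rather than reprove it. The second delicate point is invoking the one-variable fact that vanishing on $\mathbb{S}_a$ forces divisibility by the characteristic quadratic; this is classical for slice regular functions (it follows from the representation formula, or from the structure of the zero set of one-variable slice regular polynomials as established in \cite{GSV}), and I would cite it. Everything else is bookkeeping with the $*$-product.
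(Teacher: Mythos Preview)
Your proposal is correct and follows essentially the same route as the paper: divide $P$ by $q_2-b$ via the $*$-Euclidean algorithm to obtain $P=(q_2-b)*P_2+R(q_1)$, observe that $R$ must vanish on $\mathbb{S}_a$, and then factor $R$ using the one-variable characteristic polynomial. The paper's proof is terser (it omits the backward implication and simply cites \cite[Proposition 3.18]{libroGSS} for the one-variable step), but the argument is identical.
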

We then say 
that a subset $D\subseteq \mathbb{H}^2$ is  {\em
          $q_1$-symmetric} if, for any $(a,b)\in  D$, the set
        $\mathbb S_a \times \{b\}$ is contained in $D$. With this notation, we have 

\begin{Proposition} 
	Given an ideal $I\subseteq \mathbb{H}[q_1,q_2]$, either $\mathcal V(I)$  consists only of points with commuting components or $\mathcal V(I)$ is $q_1$-symmetric. \end{Proposition}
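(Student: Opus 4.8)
The plan is to show that if $\mathcal{V}(I)$ contains even a single point $(a,b)$ with $a\notin\mathbb{R}$ (i.e.\ with non-commuting components is possible), then in fact $\mathbb{S}_a\times\{b\}\subseteq\mathcal{V}(I)$, which immediately gives the dichotomy: either every point of $\mathcal{V}(I)$ has $a\in\mathbb{R}$ (so the components commute, since $b$ then commutes with the real $a$), or some point has $a\notin\mathbb{R}$ and then the whole sphere sits inside $\mathcal{V}(I)$, forcing $q_1$-symmetry. So it suffices to fix $P\in I$ and $(a,b)\in\mathcal{V}(I)$ with $a\in\mathbb{H}\setminus\mathbb{R}$, and prove $P$ vanishes on all of $\mathbb{S}_a\times\{b\}$.

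First I would reduce to a one-variable statement by freezing the second slot: consider $P(q_1,b)$. Using the Proposition on polynomials vanishing at a point in $\mathbb{H}^n$ (with $n=2$), from $P(a,b)=0$ we get $P(q_1,q_2)=(q_1-a)*P_1(q_1)+(q_2-b)*P_2(q_1,q_2)$; setting $q_2=b$ kills the second summand and leaves $P(q_1,b)=(q_1-a)*P_1(q_1)$, a one-variable slice regular polynomial divisible on the left by $(q_1-a)$. Now I invoke the standard structure theory of the $*$-product in one variable: the zero set of a one-variable slice regular polynomial is a union of isolated points and isolated spheres $\mathbb{S}_c$, and $(q_1-a)$ being a right factor means $a$ is a zero of $P(\cdot,b)$; the key classical fact (from the theory of $*$-products, e.g.\ as in \cite{libroGSS,GSV}) is that if a slice regular polynomial has two distinct zeros on the same sphere $\mathbb{S}_a$, then it vanishes on the entire sphere — equivalently, $(q_1^2-2\mathrm{Re}(a)q_1+|a|^2)$ divides it.

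The crucial extra input is that $\mathcal{V}(I)$ is the common zero set of an \emph{ideal}, not of a single polynomial, and this is where the non-commutative subtlety of the quaternionic setting is exploited. Given one zero $(a,b)$ of $I$ with $a\notin\mathbb{R}$, I would produce a second zero of $I$ on $\mathbb{S}_a\times\{b\}$: if $P\in I$ and $P(a,b)=0$, then for any unit $c$ the polynomial $P(q_1,q_2)\cdot(\text{suitable right factor})$ or a conjugated polynomial lies in $I$ (since $I$ is a right ideal, one can multiply on the right), and one checks that the conjugated data $cac^{-1}$ is also annihilated — here one uses that $I$ being a \emph{two-sided} ideal (or at least closed under the relevant operations; the statement says "ideal") lets us bring in $(q_1-cac^{-1})*(\cdot)$ type generators. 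Concretely: if every $P\in I$ satisfies $P(a,b)=0$, I want to show every $P\in I$ satisfies $P(cac^{-1},b)=0$. The cleanest route is: the one-variable restriction $P(q_1,b)$ is right-divisible by $(q_1-a)$; I'd argue that $I$ being an ideal forces a "symmetrization", e.g.\ $\tilde{P}(q_1,q_2):=$ the $*$-product of $P$ with its conjugate (or the symmetrized polynomial $P^s$) also lies in $I$, and $P^s(q_1,b)$ has real coefficients in the appropriate sense so that vanishing at $a$ forces vanishing on $\mathbb{S}_a$; then $P$ itself, being a right factor of something vanishing on $\mathbb{S}_a\times\{b\}$, must vanish there too, or one runs the argument of the third displayed Proposition in reverse.

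The main obstacle I anticipate is precisely this step: bridging from "$I$ annihilates $(a,b)$" to "$I$ annihilates $cac^{-1},b)$" for all units $c$, because point-evaluation is not a ring homomorphism, so I cannot naively say "if $P,Q\in I$ vanish at $(a,b)$ then so does $P*Q$ at $(cac^{-1},b)$." The resolution should come from the interplay between the ideal structure and the one-variable factorization: since $I$ is an ideal, for any $P\in I$ the product $(q_1^2-2\mathrm{Re}(a)q_1+|a|^2)$ times anything, and in particular the symmetrization, stays in $I$; and the symmetrization $P^s(q_1,b)$ of the restriction is a polynomial whose zero set is automatically sphere-saturated, so $a\in\mathbb{S}_a$ being a zero gives all of $\mathbb{S}_a$. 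Then applying the Proposition on vanishing on $\mathbb{S}_a\times\{b\}$ (the fourth displayed one) to conclude divisibility, and using that $I$ contains such a generator, I'd deduce $\mathbb{S}_a\times\{b\}\subseteq\mathcal{V}(I)$. I would carry out the steps in this order: (1) the dichotomy reduction; (2) fix $(a,b)$ with $a\notin\mathbb{R}$ and reduce to one variable via restriction; (3) pass to the symmetrized polynomial and use one-variable sphere-saturation; (4) transfer back to arbitrary $P\in I$ via the right-ideal property and conclude $q_1$-symmetry.
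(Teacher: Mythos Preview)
Your reduction and your main mechanism both fail. First, the dichotomy is not ``$a\in\mathbb{R}$'' versus ``$a\notin\mathbb{R}$'' but ``$ab=ba$'' versus ``$ab\neq ba$''. Your claim that $\mathbb{S}_a\times\{b\}\subseteq\mathcal{V}(I)$ whenever $(a,b)\in\mathcal{V}(I)$ with $a\notin\mathbb{R}$ is simply false: for $I=\mathcal{I}_{(i,2i)}$ one has $\mathcal{V}(I)=\{(i,2i)\}$, a single point with $a=i\notin\mathbb{R}$, and the sphere $\mathbb{S}_i\times\{2i\}$ is certainly not contained. More seriously, your proposed tool for the genuine case $ab\neq ba$---passing to the symmetrization $P^s=P*P^c$ and exploiting sphere-saturation of its zeros---breaks down exactly when it is needed. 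The paper explicitly records (immediately after the corollary on zeros of symmetrizations) that for $P(q_1,q_2)=q_1q_2-k$ one has $P(i,j)=0$ while $P^s(i,j)=2$; so when $ab\neq ba$ the symmetrization need not vanish at $(a,b)$ and yields no information about $\mathbb{S}_a\times\{b\}$. Symmetrizing the one-variable restriction $P(\cdot,b)$ instead only shows that \emph{its} symmetrization vanishes on $\mathbb{S}_a$, not $P(\cdot,b)$ itself. Your other suggestions (unspecified right factors, or left factors of the form $(q_1-cac^{-1})*(\cdot)$, which would require a left-ideal hypothesis you do not have) remain too vague to constitute an argument.

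The paper's proof (via Proposition~\ref{idealisfere}) uses the right-ideal property in a much sharper and more specific way. Since $P\in I$, also $P*q_2\in I$, hence $(P*q_2)(a,b)=0$. Because $q_2$ is a monomial with real coefficient, $P*q_2=q_2*P$, and the standard relation between the $*$-product and pointwise multiplication gives $(q_2*P)(q_1,q_2)=q_2\,P(q_2^{-1}q_1q_2,\,q_2)$. Evaluating at $(a,b)$ yields $b\,P(b^{-1}ab,\,b)=0$, so $P(b^{-1}ab,\,b)=0$. Since $ab\neq ba$, the point $b^{-1}ab$ is a second, distinct element of $\mathbb{S}_a$ at which the one-variable polynomial $P(\cdot,b)$ vanishes, and the one-variable theory then forces $P(\cdot,b)\equiv 0$ on all of $\mathbb{S}_a$. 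The choice of the specific right multiplier $q_2$---producing exactly the conjugate $b^{-1}ab$---is the idea your sketch is missing.
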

%
This is the key ingredient to show that $\mathcal{J}(\mathcal{ V }(I))$ coincides
with $\mathcal{I}(\mathcal{ V }(I))$ in $\mathbb{H}[q_1,q_2]$
and thus to have a more 
      geometric interpretation of the Strong Nullstellensatz in this
      framework. 
       \begin{theorem*}[Strong Nullstellensatz in $\mathbb{H}^2$]
Let $I$ be a right ideal in $\mathbb{H}[q_1,q_2]$. Then
\[\mathcal{J}(\mathcal{V}(I))=\sqrt{I}.\] 	
Moreover,  $\sqrt I$ coincides with the ideal of polynomials vanishing on $\mathcal V(I)$.
\end{theorem*}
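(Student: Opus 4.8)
The plan is to deduce the Strong Nullstellensatz in $\mathbb{H}^2$ from the already-established ``rephrased'' Strong Nullstellensatz (the one transported via the isomorphism $\varphi$ from \cite{israeliani-1}), together with the geometric results stated above specific to two variables. The rephrased version gives, for a right ideal $I\subseteq\mathbb{H}[q_1,q_2]$, an equality of the form $\mathcal I(\mathcal V(I))=\sqrt I$ (the intersection of completely prime ideals containing $I$), since $\sqrt I$ is defined purely algebraically and transports cleanly. So the entire content of the theorem to be proved is the single identification
\[
\mathcal J(\mathcal V(I))=\mathcal I(\mathcal V(I)),
\]
after which $\mathcal J(\mathcal V(I))=\mathcal I(\mathcal V(I))=\sqrt I$ follows, and the final sentence (``$\sqrt I$ coincides with the ideal of polynomials vanishing on $\mathcal V(I)$'') is just the definition of $\mathcal J$ combined with this chain of equalities.

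First I would recall the general inclusion $\mathcal J(Z)\subseteq\mathcal I(Z)$, valid for any $Z\subseteq\mathbb{H}^n$ and stated in the excerpt; applied to $Z=\mathcal V(I)$ this gives one containment for free. For the reverse containment $\mathcal I(\mathcal V(I))\subseteq\mathcal J(\mathcal V(I))$ I would split into the two cases furnished by the $q_1$-symmetry Proposition: either $\mathcal V(I)$ consists only of points with commuting components, or $\mathcal V(I)$ is $q_1$-symmetric. In the first case the equality $\mathcal I(Z)=\mathcal J(Z)$ for sets $Z$ of points with commuting components is exactly the fact quoted in the excerpt (``the two sets coincide, for instance, when $Z$ consists only of points with commuting components''), so we are done immediately. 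The work is therefore concentrated in the $q_1$-symmetric case.

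In the $q_1$-symmetric case, the strategy is to show that every generator of $\mathcal I(\mathcal V(I))$ already lies in $\mathcal J(\mathcal V(I))$, i.e.\ is a combination (as an element of the ideal generated by vanishing polynomials) of polynomials that genuinely vanish on $\mathcal V(I)$. Pick $(a,b)\in\mathcal V(I)$; the local ideal $\mathcal I_{(a,b)}$ is generated by $q_1-a$ and $q_2-b$. The factor $q_2-b$ already vanishes at $(a,b)$ — but not necessarily on all of $\mathcal V(I)$ — so the issue is genuinely about producing, from the generators $q_1-a,q_2-b$, polynomials vanishing on the whole of $\mathcal V(I)$ whose ideal contains $\mathcal I_{(a,b)}$. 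Here I would exploit $q_1$-symmetry: since $\mathbb S_a\times\{b\}\subseteq\mathcal V(I)$ whenever $(a,b)\in\mathcal V(I)$ (for $a\notin\mathbb R$; if $a\in\mathbb R$ then $\mathbb S_a=\{a\}$ and $q_1-a$ is already a vanishing polynomial there), the Proposition on vanishing on $\mathbb S_a\times\{b\}$ tells us that the symmetrized quadratic $q_1^2-2\operatorname{Re}(a)q_1+|a|^2$ together with $q_2-b$ generate an ideal of polynomials, each vanishing on $\mathbb S_a\times\{b\}$; and the minimal polynomial $q_1^2-2\operatorname{Re}(a)q_1+|a|^2$ is a real-coefficient polynomial that $*$-divides, on the right, any slice regular polynomial in $q_1$ vanishing on $\mathbb S_a$, so in particular $q_1-a$ is recovered up to a conjugate-type factor. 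The key point will be to check that, modulo the vanishing polynomials, $\mathcal I_{(a,b)}$ is spanned by $q_1^2-2\operatorname{Re}(a)q_1+|a|^2$, $(q_1-\bar a)$ times appropriate factors, and $q_2-b$, and that all of these lie in $\mathcal J(\mathcal V(I))$ — the first because it vanishes on $\mathbb S_a\times\{b\}\subseteq\mathcal V(I)$, and combinations with $q_2-b$ handled by the two-generator Proposition — so that intersecting over all $(a,b)\in\mathcal V(I)$ yields $\mathcal I(\mathcal V(I))\subseteq\mathcal J(\mathcal V(I))$.

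The main obstacle I anticipate is precisely the bookkeeping at a single point $(a,b)$ with $a\notin\mathbb R$: showing that the \emph{right} ideal generated by $q_1-a$ and $q_2-b$ is contained in the right ideal generated by genuinely-vanishing polynomials. The subtlety is noncommutativity of the $*$-product together with the fact that $q_1-a$ alone does \emph{not} vanish on $\mathbb S_a$, only on $\{a\}$, so one cannot simply say ``$q_1-a$ is a vanishing polynomial.'' The resolution should come from writing $q_1-a$ in terms of the real quadratic $\Delta_a(q_1):=q_1^2-2\operatorname{Re}(a)q_1+|a|^2$ and the conjugate linear factor, using the standard fact $\Delta_a(q_1)=(q_1-\bar a)*(q_1-a)=(q_1-a)*(q_1-\bar a)$, and observing that membership in $\mathcal I_{(a,b)}$ of a general element $(q_1-a)*A+(q_2-b)*B$ can be rewritten, after $*$-multiplying suitably, so that the $q_1$-part appears through $\Delta_a$ — at the cost of enlarging the ideal by elements of $\mathcal J(\mathcal V(I))$, which is exactly what we are allowed to do. Once this local statement is secured uniformly in $(a,b)$, intersecting over $\mathcal V(I)$ and invoking the already-rephrased Strong Nullstellensatz $\mathcal I(\mathcal V(I))=\sqrt I$ closes the argument; the final clause is then immediate from $\mathcal J(\mathcal V(I))=\sqrt I$ unwinding the definition of $\mathcal J$.
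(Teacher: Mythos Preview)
Your overall framing is right: the content is to show $\mathcal J(\mathcal V(I))=\mathcal I(\mathcal V(I))$ and then cite the transported Strong Nullstellensatz. The case split via the $q_1$-symmetry proposition is also correct. But your treatment of the $q_1$-symmetric case does not work as written, and your handling of the ``moreover'' clause is based on a misreading of the definition of $\mathcal J$.

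In the $q_1$-symmetric case you try to work ``locally'' at each $(a,b)\in\mathcal V(I)$, analyzing generators of $\mathcal I_{(a,b)}$ and then ``intersecting over all $(a,b)$''. This cannot succeed. First, if $ab\neq ba$ then $\mathcal I_{(a,b)}=\mathbb H[q_1,q_2]$, so the statement ``$\mathcal I_{(a,b)}$ is contained in the right ideal generated by genuinely-vanishing polynomials'' is false whenever $\mathcal V(I)\neq\varnothing$. Second, your claim that $\Delta_a(q_1)=q_1^2-2\operatorname{Re}(a)q_1+|a|^2$ lies in $\mathcal J(\mathcal V(I))$ ``because it vanishes on $\mathbb S_a\times\{b\}\subseteq\mathcal V(I)$'' confuses the direction of the inclusion: to be a generator of $\mathcal J(\mathcal V(I))$ a polynomial must vanish on \emph{all} of $\mathcal V(I)$, and $\Delta_a$ certainly need not (take any $(a',b')\in\mathcal V(I)$ with $a'\notin\mathbb S_a$). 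Third, even if each local step were correct, inclusions of the form $\mathcal I_{(a,b)}\subseteq X$ do not pass to the intersection $\bigcap_{(a,b)}\mathcal I_{(a,b)}\subseteq X$ in any useful way; the intersection is smaller, not larger.

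The paper avoids all of this by routing through $\mathcal V_c(I)$. The key observation you are missing is that $\mathcal I(\mathcal V(I))=\mathcal I(\mathcal V_c(I))$ (points with non-commuting components contribute the whole ring to the intersection), and that $\mathcal I(\mathcal V_c(I))$ coincides with the \emph{set} of polynomials vanishing on $\mathcal V_c(I)$. One then shows directly, for each $P$ in this set and each $(a,b)\in\mathcal V(I)\setminus\mathcal V_c(I)$, that $P(a,b)=0$: by $q_1$-symmetry $\mathbb S_a\times\{b\}\subseteq\mathcal V(I)$, so in particular the two commuting points $(a_1,b),(\bar a_1,b)\in\mathcal V_c(I)$ (with $a_1\in\mathbb S_a\cap C_b$) are zeros of $P$; then the one-variable theory forces $P(\cdot,b)$ to vanish on all of $\mathbb S_a$. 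This simultaneously yields $\mathcal J(\mathcal V(I))=\mathcal I(\mathcal V(I))$ and the fact that this ideal equals the set of polynomials vanishing on $\mathcal V(I)$.

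On the final clause: $\mathcal J(Z)$ is by definition the ideal \emph{generated by} polynomials vanishing on $Z$, not the set of such polynomials (which is generally not an ideal, cf.\ Proposition~\ref{idealeono}). So ``unwinding the definition of $\mathcal J$'' does not give the ``moreover'' statement; you really need the argument above to know that every element of $\mathcal J(\mathcal V(I))$ vanishes on $\mathcal V(I)$.
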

These promising results  are in the direction of developing
  a theory of quaternionic algebraic varieties. \\
The present paper is organized as follows:
 in Section \ref{sec1} we shortly recall the main definitions and results from the theory of slice 
regular polynomial functions which will be
used in the sequel. The factorization
of slice regular polynomials and application to the study of ideals in $\mathbb{H}[q_1\ldots,,q_n]$ is treated in Section \ref{333}, where
moreover one can find  several properties of the vanishing sets of slice regular polynomials in two quaternionic variables.
 The Nullstellensatz type theorems 
 for slice regular polynomials
 are investigated
 in Section \ref{WeNu}; {
in particular, a new version of the Strong Nullstellensatz  Theorem  is proved for ideals in $\mathbb{H}[q_1,q_2]$.
Finally we provide examples  in several quaternionic variables that enforce the evidence that this new version of the Strong Nullstellensatz
should hold in $\mathbb{H}[q_1, \ldots, q_n]$.

\section{Introduction to quaternionic slice regular polynomials}\label{sec1}
Let $\mathbb{H}{=\mathbb{R}+i\mathbb{R}+j\mathbb{R}+k\mathbb{R}}$ denote the skew field of
quaternions and let $\mathbb{S}=\{q \in \mathbb{H} \ : \ q^2=-1\}$ be
the two dimensional sphere of quaternionic imaginary units.  Then
\[ \mathbb{H}=\bigcup_{J\in \mathbb{S}}(\mathbb{R}+\mathbb{R} J),  
\]
where the ``slice'' $\mathbb{C}_J:=\mathbb{R}+\mathbb{R} J$ can be
identified with the complex plane $\mathbb{C}$ for any
$J\in\mathbb{S}$.  { In this way, any $q\in \mathbb{H}$ can be
  expressed as $q=x+yJ$ with $x,y \in \mathbb{R}$ and $J \in
  \mathbb{S}$. The {\it real part }of $q$ is ${\rm Re}(q)=x$ and its
         {\it imaginary part} is ${\rm Im}(q)=yJ$; the {\it conjugate}
         of $q$ is $\bar q:={\rm Re}(q)-{\rm Im}(q)$.  For any non-real
         quaternion $a\in \mathbb{H}\setminus \mathbb{R}$ we will
         denote by $J_a:=\frac{{\rm Im}(a)}{|{\rm Im}(a)|}\in
         \mathbb{S}$ and by $\mathbb{S}_a:=\{{\rm Re}(a)+J |{\rm
           Im}(a)| \ : \ J\in \mathbb{S}\}$. If $a\in \mathbb{R}$,
         then $J_a$ is any imaginary unit.}\\ 
 In the present paper,  the central object of our study are slice regular quaternionic polynomial functions of the form
 \[q\mapsto \sum_{n=0}^{N}q^na_n,  
 \  a_n \in \mathbb{H};\]
 $N$ will represent the {\em degree} of the polynomial function.
While the sum of two slice regular polynomials  is clearly slice regular, an appropriate notion of
multiplication of slice regular polynomials is given by the so called {\em $*$-product}.
\begin{definition}
	If $P(q)=\sum_{n=0}^{N} q^n a_n$ and
        $Q(q)=\sum_{n=0}^{M} q^n b_n$ are two slice regular polynomials, then the $*$-product of $P$ and $Q$ is  defined by
	$$
	P\ast Q(q):=\sum_{n=0}^{N+M}q^n\sum_{k=0}^na_k b_{n-k}.
	$$
	\end{definition}
\noindent Notice that the $*$-product is associative but not commutative in general.

\noindent It is not difficult to see that if $P(q)=\sum_{n=0}^{N}q^na_n$ has real coefficients $\{a_n\}$ (i.e. $P$ is {\em
  slice preserving}), then, for any slice regulr polynomial $Q$,
$
P*Q(q)=
P(q)\cdot Q(q)=
Q*P(q).
$

\noindent
In general, the relation of the $*$-product with the usual pointwise product is the following (see \cite[Theorem 3.4]{libroGSS} ):
	$$
	P* Q(q)=\left\{\begin{array}{c cl}
	0 & &\text{if $P(q)=0 $}\\
	P(q)\cdot Q(P(q)^{-1}\cdot q \cdot P(q)) & &\text{if $P(q)\neq 0 $\,,}
	\end{array}\right.
	$$
	for any $P,Q$ slice regular polynomials.
	\noindent
        Notice that $P(q)^{-1}\cdot q\cdot P(q)$ belongs to the 
        sphere $\mathbb{S}_q$. Hence each zero of $P*Q$ in
        $\mathbb{S}_q$ is given either by a zero of $P$ or by a point
        which is a conjugate of a zero of $Q$ in the same sphere.
	
A peculiar aspect of  slice regular polynomials (more in general slice regular functions) is the structure of their zero
sets. In fact, besides isolated zeros, they can also vanish on two
dimensional spheres. As an example, the polynomial $q^2+1$ vanishes on
the entire sphere of imaginary units $\mathbb{S}$.  It can be proven
that also spherical zeros cannot accumulate.

\begin{theorem}\cite[Theorem 3.13]{libroGSS}
	Let $P$ be a slice regular polynomial. If $P$ does not vanish identically, then its zero
        set consists of isolated points or isolated $2$-spheres of the
        form $x +y \mathbb{S}$ with $x,y \in \mathbb{R}$, $y \neq 0$.
\end{theorem}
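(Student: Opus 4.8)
The plan is to reduce the statement to the slice preserving (real coefficient) case by symmetrization, and then to exploit the fact that a slice regular polynomial is affine along each sphere $x+y\mathbb{S}$. First I would introduce the \emph{regular conjugate} $P^c(q)=\sum_{n=0}^{N}q^n\bar a_n$ and the \emph{symmetrization} $N(P):=P*P^c$. A direct computation with the definition of the $*$-product shows that the coefficient of $q^n$ in $N(P)$ equals $\sum_{k=0}^n a_k\bar a_{n-k}=\sum_{k}2\,{\rm Re}(a_k\bar a_{n-k})$ (grouping $k$ with $n-k$), hence is real, so $N(P)$ is slice preserving; moreover its leading coefficient is $a_N\bar a_N=|a_N|^2\neq0$, so $N(P)$ is a nonzero polynomial of degree $2N$. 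Using the displayed formula relating the $*$-product to the pointwise product, if $P(q_0)=0$ then $N(P)(q_0)=(P*P^c)(q_0)=0$; thus the zero set of $P$ is contained in that of $N(P)$.

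Next I would describe the zero set of the real polynomial $N(P)$. Since $N(P)$ is slice preserving, for a non-real $q=x+yJ$ its value lies in the slice $\mathbb{C}_J$ and is obtained by substituting $J$ for the complex imaginary unit in the associated complex polynomial; in particular $N(P)(x+yJ)=0$ for one $J\in\mathbb{S}$ if and only if it holds for all $J\in\mathbb{S}$, if and only if $x+iy$ is a complex root of $N(P)$. Consequently the zero set of $N(P)$ is the finite union of the real points and the $2$-spheres $x+y\mathbb{S}$ determined by the (at most $2N$) complex roots of $N(P)$, and these components are isolated because the roots of a nonzero polynomial are isolated.

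It remains to analyze $P$ itself on each such component. Here I would invoke the representation formula, which yields an expression $P(x+yJ)=c+Jd$ for quaternions $c,d$ depending only on $x,y$ (recovered from the values of $P$ at the two points $x\pm yI$ of a fixed slice $\mathbb{C}_I$). On a sphere $x+y\mathbb{S}$ with $y\neq0$ this makes the equation $P(x+yJ)=0$ affine in $J$: if $d=0$ it holds either for every $J$, so that $P$ vanishes on the whole sphere, or for none; if $d\neq0$ it forces $J=-cd^{-1}$, giving at most one zero on that sphere. Combining this with the previous step, every point of the zero set of $P$ lies in the finite, isolated family of components of the zero set of $N(P)$, and on each component $P$ either vanishes identically (yielding an isolated $2$-sphere $x+y\mathbb{S}$) or vanishes at a single point (an isolated point, real if the component is a real point). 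This gives exactly the claimed dichotomy.

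The main obstacle is establishing the affine dependence $P(x+yJ)=c+Jd$ along spheres, i.e. the representation formula, which is the genuinely analytic input and the only place where slice regularity (rather than mere polynomiality) is used in an essential way. Once it is available, the symmetrization step and the description of the zero set of $N(P)$ are bookkeeping with the $*$-product, and the isolation of all components follows from the discreteness of the roots of the nonzero real polynomial $N(P)$.
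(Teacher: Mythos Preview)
The paper does not prove this theorem; it is simply quoted from \cite{libroGSS} as background, so there is no ``paper's own proof'' to compare against. Your sketch is in fact the standard argument one finds in that reference: pass to the slice preserving symmetrization $P^s=P*P^c$, observe $Z_P\subseteq Z_{P^s}$, describe $Z_{P^s}$ as a finite union of real points and $2$-spheres via the complex roots, and then use the affine dependence $P(x+yJ)=c+Jd$ along each sphere (representation formula) to get the dichotomy ``whole sphere or at most one point''. One cosmetic slip: the displayed identity $\sum_{k=0}^n a_k\bar a_{n-k}=\sum_k 2\,{\rm Re}(a_k\bar a_{n-k})$ is not literally correct (pairing $k$ with $n-k$ double-counts, and for even $n$ the middle term is $|a_{n/2}|^2$), but the intended conclusion that each coefficient of $P^s$ is real is of course right.
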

\noindent Slice regular polynomials can be viewed as an important subclass of slice regular functions. We refer to \cite{libroGSS} for a detailed survey on the theory of slice regular functions in one quaternionic variable. }
\subsection{Several variable case}

We will deal with polynomial functions $P: \mathbb{H}^n\to \mathbb{H}$ of the form 
	\[P(q_1,\ldots, q_n)=\sum_{ \substack{\ell_1=0,\ldots, L_1  \\
	\cdots\\ \ell_n=0,\ldots, L_n} }{q_1}^{\ell_1}\cdots {q_n}^{\ell_n}a_{\ell_1,\ldots,\ell_n} \]
	with $a_{\ell_1,\ldots,\ell_n}\in\mathbb{H}$, 
where $\deg_{{q_{\ell_j}}}P:=L_j$.\\	
\noindent
These polynomial functions are  examples of slice regular functions on $\mathbb{H}^n$. %
When considering functions of several quaternionic variables, the
definition of slice regularity 
relies on
the notion of {\em stem functions}.
The formulation of the theory of slice regular funcions in several quaternionic variables 
can be found in \cite{severalvariables}. 
For the sake of simplicity, we will give the main definitions and statements in the case of slice regular polynomial functions
in two quaternionic variables.
All the notions and properties discussed in this section can be easily adapted to the several variables case.

%

Slice regular polynomial functions of several variables can be endowed with an
appropriate notion of product, the so called {\em slice product}. It
will be denoted by the same symbol $*$ used in the one-variable
case. 
Let us recall here how it works for
slice regular polynomials in two variables.
\begin{definition}
	If $P(q_1,q_2)=\sum_{ \substack{n=0,\ldots,N_1  \\ m=0,\ldots,N_2} } q_1^nq_2^m a_{n,m}$ and
        $Q(q)=\sum_{ \substack{n=0,\ldots,L_1  \\ m=0,\ldots,L_2} }q_1^nq_2^m b_{n,m}$ are two
        polynomials,
        then the {\em $*$-product} of $P$ and $Q$ is the polynomial
        defined by
$$
P*Q (q_1,q_2):=\sum_{ \substack{n=0,\ldots,N_1+L_1  \\ m=0,\ldots,N_2+L_2}}q_1^nq_2^m\sum_{ \substack{r=0,\ldots,n  \\ s=0,\ldots,m} }a_{r,s} b_{n-r,m-s}
$$
\end{definition}
\noindent For example, if $a,b \in \mathbb{H}$, then
\begin{itemize}
	\item $q_1*q_2=q_2*q_1=q_1q_2$;
	\item $a*(q_1q_2)=(q_1q_2)*a=q_1q_2a$;
	\item $(q_1^nq_2^ma)*(q_1^rq_2^sb)=q_1^{n+r}q_2^{m+s}ab$.
\end{itemize} 	
\noindent moreover we point out that, if $P$ or $Q$ have real coefficients, then $P*Q=Q*P$.

{
\begin{remark}\label{valutazione}
Observe that 
the evaluation of slice regular power series
is not a multiplicative homomorphism.
Therefore, the zeros  of the $*$-product of two slice regular polynomials is not in general the union
    of the zeros of each of the factors.
For instance, $q_1-i$ vanishes on $\{i\}\times \mathbb{H}$,
    while $(q_1-i)*(q_2-j)=q_1q_2-q_1j-q_2i+k$, when $q_1=i$, vanishes only for  $q_2\in\mathbb{C}_i$.

\end{remark}
}


\medskip 

\noindent In the sequel we will denote by $\mathbb{H}[q_1,\ldots,q_n]$ the set of slice regular polynomials in $n$ quaternionic variables. Since the $*$-product is associative but not commutative, $(\mathbb{H}[q_1,\ldots,q_n], +, *)$ is
a noncommutative ring (without zero divisors). 

\begin{definition}
	A subset $ I$ of $\mathbb{H}[q_1,\ldots,q_n]$, closed under addition, is called
	\begin{itemize}
		\item a {\em left ideal} if for any $P\in\mathbb{H}[q_1,\ldots,q_n]$, $P* I=\{P*Q \ : \ Q \in  I\} \subseteq  I$;
		\item a {\em right ideal} if for any $P\in\mathbb{H}[q_1,\ldots,q_n]$, $I*P=\{Q*P \ : \ Q \in I\} \subseteq  I$;
		\item a {\em two-sided ideal} if $ I$ is both a left and a right ideal. 
	\end{itemize}
\end{definition}

From the algebraic point of
view,
the 
ring $(\mathbb{H}[q_1,\ldots,q_n], +, *)$ is
isomorphic to the ring $(\mathbb{H}[x_1,\ldots, x_n], +, \cdot)$ of
quaternionic polynomials in several central variables with left coefficients considered in
\cite{israeliani-1}, via the map defined on monomials as
\begin{equation}\label{iso}
	\begin{aligned}
		\varphi: (\mathbb{H}[q_1,\ldots,q_n], +, *) &\longrightarrow (\mathbb{H}[x_1,\ldots, x_n], +, \cdot)\\
	\varphi: q_1^{\ell_1}\cdots q_n^{\ell_n}a &\longmapsto \overline{a} x_1^{\ell_1}\cdots x_n^{\ell_n},
	\end{aligned}
\end{equation}
and then extended by additivity to polynomials.
The Identity Principle for Polynomials guarantees that $\varphi$ is a bijection. Moreover it satisfies the equality
\begin{equation*}
\begin{aligned}
	\varphi& (q_1^{\ell_1}\cdots q_n^{\ell_n}a*q_1^{k_1}\cdots q_n^{k_n}b)=\varphi(q_1^{\ell_1+k_1}\cdots q_n^{\ell_n+k_n}ab)= 
	\overline{ab} x_1^{\ell_1+k_1}\cdots x_n^{\ell_n+k_n}=
	\\=&\overline{b} \overline{a} x_1^{\ell_1+k_1}\cdots x_n^{\ell_n+k_n}
	=\overline{b} x_1^{\ell_1}\cdots x_n^{\ell_n}\cdot \overline{a} x_1^{k_1}\cdots x_n^{k_n}=\varphi(q_1^{k_1}\cdots q_n^{k_n}b) \cdot \varphi(q_1^{\ell_1}\cdots q_n^{\ell_n}a).
\end{aligned}	
 \end{equation*}
 { The isomorphism $\varphi$ inverts the order of the factors, this is in accordance with the fact that ideals considered in \cite{israeliani-1} are {\em left} ideals, while in our context it is natural to consider {\em right} ideals.
 }
 
{\begin{remark}\label{isomorph} 
Observe that, given $P\in \mathbb{H}[q_1,\ldots,q_n], +, *)$, as a slice regular polynomial funtion, it is defined in  $\mathbb{H}^n$, whereas $\varphi(P)\in (\mathbb{H}[x_1,\ldots, x_n], +, \cdot)$ 
can be regarded as a function only defined in
    $\bigcup_{J\in\mathbb{S}} (\mathbb{C}_J)^n
    \Subset
    \mathbb{H}^n$.

  
\end{remark}
}

{

 As a consequence of the previous remark, the isomorphism $\varphi$  cannot be used to have information on the evaluations of the isomorphic polynomials in the two polynomial rings.
 However, 
in $\bigcup_{J\in\mathbb{S}} (\mathbb{C}_J)^n
\Subset
    \mathbb{H}^n$
it is possible to establish a correspondence between values of slice regular polynomials in $\mathbb{H}[q_1,\ldots,q_n]$ and those  of their images in $\mathbb{H}[x_1,\ldots,x_n]$ through $\varphi$. Indeed,
let $(a_1,\ldots,a_n) \in \mathbb{H}^n$ be such that $a_\ell a_m=a_ma_\ell$ for any $\ell,m$, and let $P\in\mathbb{H}[q_1,\ldots,q_n]$. Then, it is immediate to see that 

\begin{equation}\label{valori}
	\varphi(P(a_1,\ldots,a_n))=(\varphi(P))(\overline{a_1},\ldots,\overline{a_n}).
	\end{equation} }

As in the one-variable case, it is possible to introduce two
        operators on slice regular polynomials (see \cite[Definition 1.46]{libroGSS}). For the sake of
        simplicity, we define them only in the case of two
        quaternionic variables.
\begin{definition}\label{R-coniugata2}
	Let $P(q_1,q_2)=\sum\limits_{\substack{n=0,\ldots,N  \\ m=0,\ldots,M}}q_1^nq_2^ma_{n,m}$ be
        a slice regular polynomial. 
Then the \emph{regular conjugate} of $P$ is the
        slice regular polynomial 
	\[P^c(q_1,q_2)=\sum_{\substack{n=0,\ldots,N  \\ m=0,\ldots,M}}q_1^nq_2^m\overline{a_{n,m}},\]
	and the
	\emph{symmetrization} of $P$ is the slice regular polynomial 
	\[P^s= P*P^c = P^c*P.\]
\end{definition}
 A very useful result, that will be used in the sequel,
  is the following version of {\em Identity Principle} for slice
  regular polynomial functions. See \cite[Corollary 2.13]{severalvariables}
  for the details.

\begin{theorem}[Identity Principle]\label{Id}
	Let $P, Q$ be two slice regular polynomials in $n$
        quaternionic variables. If there exist
        $J_1,\ldots,J_n \in \mathbb S$ such that $P\equiv Q$ on
        $
        \mathbb{C}_{J_1}\times\cdots\times\mathbb{C}_{J_n}$, then
        $P\equiv Q$ on $\mathbb{H}^n$.
\end{theorem}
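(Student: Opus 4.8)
The plan is to reduce the statement to showing that a single slice regular polynomial $R:=P-Q$ which vanishes on $\mathbb{C}_{J_1}\times\cdots\times\mathbb{C}_{J_n}$ must vanish identically on $\mathbb{H}^n$ (the difference of two slice regular polynomials is again slice regular), and to prove this by induction on the number $n$ of variables. The guiding principle is to keep all coefficients on the \emph{right} throughout the argument, so that every one-variable restriction we form is again a genuine slice regular polynomial of the form $\sum_k q^k c_k$; this is what makes the non-commutativity harmless.

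For the base case $n=1$, I would use the splitting $\mathbb{H}=\mathbb{C}_J\oplus\mathbb{C}_J J'$, where $J'\in\mathbb{S}$ is orthogonal to $J$. Writing $R(q)=\sum_k q^k a_k$ and decomposing each coefficient as $a_k=b_k+c_kJ'$ with $b_k,c_k\in\mathbb{C}_J$, for $q\in\mathbb{C}_J$ one gets $R(q)=F(q)+G(q)J'$ with $F(q)=\sum_k q^kb_k$ and $G(q)=\sum_k q^kc_k$ two holomorphic $\mathbb{C}_J$-valued polynomials (here $q^k,b_k,c_k$ all commute inside $\mathbb{C}_J$). Since the sum $\mathbb{C}_J\oplus\mathbb{C}_JJ'$ is direct, $R\equiv0$ on $\mathbb{C}_J$ forces $F\equiv G\equiv0$ on $\mathbb{C}_J\cong\mathbb{C}$, and the classical Identity Principle for complex polynomials yields $b_k=c_k=0$, hence $a_k=0$ for every $k$. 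Equivalently, one may invoke the structure of the zero set recalled above, since a nonzero one-variable polynomial cannot vanish on a whole slice.

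For the inductive step, assume the statement for $n-1$ variables and collect $R$ by powers of the first variable, $R(q_1,\dots,q_n)=\sum_{\ell=0}^{L}q_1^{\ell}R_\ell(q_2,\dots,q_n)$, where each $R_\ell\in\mathbb{H}[q_2,\dots,q_n]$ is an ordinary (right-coefficient) slice regular polynomial. Fix an arbitrary point $(q_2,\dots,q_n)\in\mathbb{C}_{J_2}\times\cdots\times\mathbb{C}_{J_n}$ and set $b_\ell:=R_\ell(q_2,\dots,q_n)\in\mathbb{H}$. Then $q_1\mapsto R(q_1,q_2,\dots,q_n)=\sum_\ell q_1^{\ell}b_\ell$ is a one-variable slice regular polynomial which, by hypothesis, vanishes for all $q_1\in\mathbb{C}_{J_1}$; the base case forces every coefficient $b_\ell$ to vanish, i.e.\ $R_\ell(q_2,\dots,q_n)=0$. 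As the point was arbitrary, each $R_\ell$ vanishes on $\mathbb{C}_{J_2}\times\cdots\times\mathbb{C}_{J_n}$, and the inductive hypothesis gives $R_\ell\equiv0$ on $\mathbb{H}^{n-1}$ for every $\ell$; hence $R\equiv0$ on $\mathbb{H}^n$.

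The step I expect to require the most care is precisely this bookkeeping of coefficients under non-commutativity. If one instead froze $q_1$ to a quaternionic value and tried to slice in the last variable, the monomials would acquire left factors $q_1^{\ell}$ and would no longer be of the admissible form $\sum q^k c_k$, so the one-variable Identity Principle could not be applied. Grouping by powers of $q_1$, with the remainder $R_\ell$ carrying the coefficients on the right, is exactly what keeps every intermediate object a bona fide slice regular polynomial and makes both the base case and the induction legitimate.
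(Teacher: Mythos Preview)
Your argument is correct. The reduction to $R:=P-Q$, the splitting-lemma proof of the base case, and the induction obtained by collecting powers of $q_1$ with right coefficients $R_\ell(q_2,\dots,q_n)\in\mathbb{H}[q_2,\dots,q_n]$ all go through; the key identity
\[
R(q_1,\dots,q_n)=\sum_{\ell}q_1^{\ell}\,R_\ell(q_2,\dots,q_n)
\]
holds \emph{pointwise} on $\mathbb{H}^n$ precisely because the variables appear in the order $q_1^{\ell_1}\cdots q_n^{\ell_n}$ with the quaternionic coefficient on the right, and this is exactly what you exploit. Your closing remark about why one must peel off $q_1$ rather than $q_n$ is accurate and identifies the only place where non-commutativity could have caused trouble.

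As for the comparison with the paper: the paper does \emph{not} prove this statement; it simply quotes it as \cite[Corollary 2.13]{severalvariables}, where it is obtained as a consequence of the general stem-function machinery for slice regular functions in several variables. Your proof is therefore not a reproduction of the paper's argument but a genuinely more elementary, self-contained alternative tailored to the polynomial case. What the stem-function approach buys is generality (it applies to all slice regular functions, not just polynomials); what your approach buys is that it stays entirely within the realm of polynomials and the one-variable splitting lemma, avoiding the heavier apparatus of \cite{severalvariables}.
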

\section{Some remarks on factorization of slice regular polynomials}\label{333}

In what follows we will only consider slice regular polynomials in quaternionic variables. For brevity, we will refer to them simply as {\em regular polynomials}.
 In the one-variable setting it is possible to perform both left and right Euclidean $*$-division between regular polynomials, see \cite[Proposition 3.42]{libroGSS}. For regular polynomials in several quaternionic variables, as in the complex case, we need to restrict our setting to the case of division of regular polynomials by a {\em monic} regular  polynomial.

\begin{definition}
A  regular polynomial $P(q_1,\ldots, q_2)$ is {\em monic} of degree $d$ in the variable
$q_j$ if it can be written as
\[	P(q_1,\ldots, q_n)=q_j^d+\sum_{k=1}^{d-1}q_j^{k}*P_k(q_1,\ldots,q_{j-1},q_{j+1}, \ldots,q_n)\] 
with $P_1,\ldots,P_{d-1} \in \mathbb{H}[q_1,\ldots,q_{j-1},q_{j+1}, \ldots, q_n]$.
\end{definition}
\begin{proposition}\label{euclidean}
	Let $M\in \mathbb{H}[q_1,\ldots,q_n]$ be a monic regular polynomial
        of degree $d$ in $q_j$, with $1\le j \le n$. Then, for any $P\in
        \mathbb{H}[q_1,\ldots,q_n]$, there exist, and are unique, regular polynomials
        $Q\in \mathbb{H}[q_1,\ldots,q_n]$ and $R_0, \ldots,
        R_{d-1}\in \mathbb{H}[q_1, \ldots,q_{j-1},q_{j+1},\ldots,q_n]$ such that
	\[P=M*Q+\sum_{k=0}^{d-1}q_j^k*R_{k}.\]
\end{proposition}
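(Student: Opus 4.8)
The statement is a several-variable, $*$-product version of Euclidean division by a monic polynomial; the natural approach is the classical one, splitting into existence and uniqueness. First I would prove existence by induction on $\deg_{q_j} P$. If $\deg_{q_j}P<d$, take $Q=0$ and let $R_0,\ldots,R_{d-1}$ be the coefficients of $P$ viewed as a polynomial in $q_j$ with coefficients in $\mathbb{H}[q_1,\ldots,q_{j-1},q_{j+1},\ldots,q_n]$—here one must be slightly careful, because the monomials $q_1^{\ell_1}\cdots q_n^{\ell_n}a$ are ordered, so one writes $P=\sum_{k=0}^{e}q_j^k*S_k$ with $S_k\in\mathbb{H}[q_1,\ldots,\widehat{q_j},\ldots,q_n]$, using that $q_j$ commutes (under $*$) with the other variables and with the ordering conventions recorded just before Remark~\ref{valutazione}. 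If $e:=\deg_{q_j}P\ge d$, write the top term as $q_j^{e}*S_e$ and subtract $M*(q_j^{e-d}*S_e)$: since $M$ is monic of degree $d$ in $q_j$, the product $M*(q_j^{e-d}*S_e)$ has leading term (in $q_j$) exactly $q_j^{e}*S_e$, so $P':=P-M*(q_j^{e-d}*S_e)$ has $\deg_{q_j}P'<e$, and the inductive hypothesis applies to $P'$; reassembling gives $P=M*Q+\sum_{k=0}^{d-1}q_j^k*R_k$.

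For uniqueness, suppose $M*Q+\sum_{k=0}^{d-1}q_j^k*R_k=M*Q'+\sum_{k=0}^{d-1}q_j^k*R'_k$. Then $M*(Q-Q')=\sum_{k=0}^{d-1}q_j^k*(R'_k-R_k)$, whose right-hand side has $\deg_{q_j}<d$. If $Q-Q'\ne 0$, then since $\mathbb{H}[q_1,\ldots,q_n]$ has no zero divisors and $M$ is monic of degree $d$ in $q_j$, the left-hand side has $\deg_{q_j}\ge d$—this is the point that needs the no-zero-divisor property and the fact that the leading coefficient of $M$ in $q_j$ is $1$, so it cannot be cancelled. This contradiction forces $Q=Q'$, whence $\sum_{k=0}^{d-1}q_j^k*(R'_k-R_k)=0$; comparing coefficients of $q_j^k$ (again using the monomial ordering and that the $R_k,R'_k$ do not involve $q_j$) gives $R_k=R'_k$ for all $k$.

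The main obstacle, and the place I would spend the most care, is the bookkeeping of the $*$-product in several variables: one must verify that "degree in $q_j$" behaves well under $*$-multiplication by $M$—specifically that $\deg_{q_j}(M*G)=d+\deg_{q_j}G$ with controlled leading coefficient, and that every regular polynomial can be uniquely written as $\sum_k q_j^k*S_k$ with $S_k$ not involving $q_j$. Both reduce to the observation that $q_j$ is $*$-central with respect to the other variables (so one may always pull all powers of $q_j$ to the left) together with the absence of zero divisors in $\mathbb{H}[q_1,\ldots,q_n]$. Once this is in place, the argument is a verbatim transcription of the commutative Euclidean algorithm. I would also remark that, by symmetry of the $*$-product in the other variables, nothing is lost in stating the proof for a fixed $j$; the reader can substitute any index.
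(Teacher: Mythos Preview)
Your proposal is correct and follows essentially the same induction-on-$\deg_{q_j}P$ argument as the paper for existence: write $P=\sum_k q_j^k*S_k$, and in the inductive step subtract $M*(q_j^{e-d}*S_e)$ to lower the degree. In fact your write-up is more complete than the paper's own proof, which establishes only existence and never addresses the uniqueness claimed in the statement; your degree argument for uniqueness (using that $M$ is monic and that the ring has no zero divisors) fills that gap.
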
  
\begin{proof}
Let $P_0,\ldots,P_{s}\in \mathbb{H}[q_1,\ldots,q_{j-1},q_{j+1}, \ldots, q_n]$ be such that
\[P=\sum_{k=0}^{s}q_j^k*P_{k},\]
and proceed by induction on the degree $s$ of $P$ in $q_j$.

\noindent
If $s<d$, then we set $Q\equiv 0$ and $R_{k}=P_k$ and we immediately
prove the statement.\\ 
Otherwise, consider $\hat P = P- M*q_j^{s-d}*P_s$.
Since $M$ is monic of degree $d$ in $q_j$, we get that $\deg_{q_j}\hat P<s$. We can therefore use the induction hypothesis to write 
$\hat P=M*\hat Q+\sum\limits_{k=0}^{d-1}q_j^k\hat
R_{k}$, so that
\[ P=\hat P + M*q_j^{s-d}*P_s=M*(\hat Q+q_j^{s-d}*P_s)+\sum_{k=0}^{d-1}q_j^k\hat R_{k}.\]  
Thus, setting $Q=\hat Q+q_j^{s-d}*P_s$ and $R_{k}=\hat R_k$, we conclude.
\end{proof}


           Here we begin the study of zeros of regular polynomials. 
           Our interest is twofold: on the one hand we aim to better understand the structure of the vanishing sets of regular polynomials, on the other hand we want to investigate the relation between such vanishing sets and ideals in $\mathbb{H}[q_1,\ldots, q_n]$. 

\begin{definition}
If $a\in\mathbb{H}$, let $C_a$ be the set of $q\in \mathbb{H}$ such that $aq=qa$, namely
\[ C_a=\left\{\begin{array}{rcl} \mathbb{C}_{J_a} &\mathrm{if}& a\in\mathbb{H}\setminus\mathbb{R}\\\\
\mathbb{H} &\mathrm{if}& a\in\mathbb{R}\end{array}
\right.\]

\end{definition}
%

  Let us first consider a simple example that enlights 
   how the lack of commutativity of the product affects the
  structure of the zero locus of a regular polynomial in two
  quaternionic variables and shows that these two variables are not interchangeable.
	\begin{example}
		Let $a,b\in \mathbb{H}$ be such that $ab\neq ba$ and consider the regular polynomials
		\[P(q_1,q_2)=(q_1-a)*(q_2-b)=q_1q_2-q_1b-q_2a+ab\]
		and 
		\[Q(q_1,q_2)=(q_2-b)*(q_1-a)=q_1q_2-q_1b-q_2a+ba.\]
For $q_1=a$ we have 
\[P(a,q_2)=aq_2-q_2a \quad \text{and} \quad Q(a,q_2)=aq_2-ab-q_2a+ba.\] 
Hence $P$ vanishes on $\{a\}\times C_a$, while $Q$ does not. \\
On the other hand, for $q_2=b$, 
\[P(q_1,b)=-ba+ab \quad \text{and} \quad Q(q_1,b)\equiv 0.\]
Hence $Q$ vanishes on $\mathbb{H}\times\{b\}$, while $P$ is never zero when $q_2=b$.  
\end{example}
%


{	\begin{remark}\label{noncorrisp}

		In addition to Remark \ref{isomorph}, observe that there is not a direct
		correspondence between the zero locus of regular polynomials in
		$(\mathbb{H}[q_1,q_2], +, *)$ with the zero locus of polynomials in
		$(\mathbb{H}[x_1,x_2], +, \cdot)$.
		In fact, the zero locus of the regular
		polynomial $P(q_1,q_2)=(q_2-i)*(q_1-j)$ in $\mathbb{H}^2$ contains
		 $\mathbb{H} \times \{i\}$, while its isomorphic image via $\varphi$ in
		$(\mathbb{H}[x_1,x_2], +, \cdot)$, namely $(x_1+j)\cdot (x_2+i)$,
		for $x_2=-i$ vanishes only when $x_1 \in\mathbb{C}_{i}$ (since it cannot be
		evaluated at points with noncommutative coordinates).

	\end{remark}
}

     The next proposition gives a first geometrical  description of the zero set of regular polynomials in several quaternionic variables.
        
	\begin{proposition}\label{mlineare}
	Let $P\in \mathbb{H}[q_1, \ldots, q_n]$
	be a regular polynomial in $n$ variables and let $1\le m \le n$. Then $P$ vanishes on $\mathbb{H}^{m-1}\times\{a\}\times (C_a)^{n-m}$
	if and only if there exists $P_m\in\mathbb{H}[q_1, \ldots, q_n]$ such that 
	\[P(q_1, \ldots, q_n)=(q_m-a)*P_m(q_1, \ldots, q_n).\]	
\end{proposition}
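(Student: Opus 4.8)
The statement is an "if and only if," so the plan is to treat the two directions separately, with the reverse direction being essentially a direct computation and the forward direction requiring a division argument. For the reverse implication, suppose $P(q_1,\ldots,q_n)=(q_m-a)*P_m(q_1,\ldots,q_n)$. I would evaluate along the prescribed set: take a point $(b_1,\ldots,b_{m-1},a,c_{m+1},\ldots,c_n)$ with each $c_\ell\in C_a$. Using the relation between the $*$-product and the pointwise product recalled in Section~\ref{sec1} (the formula $P*Q(q)=P(q)\cdot Q(P(q)^{-1}qP(q))$ when $P(q)\neq 0$, and $0$ otherwise), applied in the variable $q_m$ with all other variables frozen, one sees that whenever the first factor $q_m-a$ vanishes at $q_m=a$, the whole $*$-product vanishes there, regardless of the value of $P_m$. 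The role of the hypothesis $c_\ell\in C_a$ for $\ell>m$ is only needed because the set in question is $\mathbb H^{m-1}\times\{a\}\times(C_a)^{n-m}$; but actually the vanishing of the first factor at $q_m=a$ already forces the product to be $0$ at \emph{every} point of $\mathbb H^{m-1}\times\{a\}\times\mathbb H^{n-m}$, a fortiori on the smaller set. So the reverse direction is the easy one and costs almost nothing.

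For the forward implication, suppose $P$ vanishes on $\mathbb H^{m-1}\times\{a\}\times(C_a)^{n-m}$. The natural tool is Proposition~\ref{euclidean}: since $q_m-a$ is monic of degree $1$ in $q_m$, divide $P$ by it to get $P=(q_m-a)*Q+R_0$, where $R_0\in\mathbb H[q_1,\ldots,q_{m-1},q_{m+1},\ldots,q_n]$ does not involve $q_m$ at all. The goal is then to show $R_0\equiv 0$, from which $P_m:=Q$ finishes the proof. To see this, I would evaluate the identity $P=(q_m-a)*Q+R_0$ at points of the given vanishing set. At a point with $q_m=a$, the summand $(q_m-a)*Q$ vanishes (by the same $*$-product computation as above), and $P$ vanishes by hypothesis, so $R_0$ vanishes at every point $(b_1,\ldots,b_{m-1},c_{m+1},\ldots,c_n)\in\mathbb H^{m-1}\times(C_a)^{n-m}$.

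The main obstacle is precisely concluding that a polynomial $R_0$ in the variables $q_1,\ldots,q_{m-1},q_{m+1},\ldots,q_n$ which vanishes on the set $\mathbb H^{m-1}\times(C_a)^{n-m}$ must be identically zero. The variables $q_1,\ldots,q_{m-1}$ range over all of $\mathbb H$, which is fine, but the last $n-m$ variables range only over $C_a$; if $a\in\mathbb R$ then $C_a=\mathbb H$ and there is nothing to worry about, but if $a\notin\mathbb R$ then $C_a=\mathbb C_{J_a}$ is a single slice. The resolution is the Identity Principle, Theorem~\ref{Id}: a regular polynomial vanishing on $\mathbb C_{J_1}\times\cdots\times\mathbb C_{J_n}$ for some choice of imaginary units vanishes identically. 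For the variables $q_\ell$ with $\ell<m$ we may pick any slice $\mathbb C_{J_\ell}\subseteq\mathbb H$; for the variables $q_\ell$ with $\ell>m$ we use the slice $\mathbb C_{J_a}=C_a$, on which we already know $R_0$ vanishes. Hence $R_0\equiv 0$ on a product of slices, and by Theorem~\ref{Id} (applied to the polynomial $R_0$ in its $n-1$ variables) $R_0\equiv 0$ everywhere. This gives $P=(q_m-a)*Q$, completing the forward direction. One small point to be careful about in writing this up: one must check that the $*$-product $(q_m-a)*Q$ genuinely vanishes at $q_m=a$ as a \emph{function}, not merely formally — this is exactly the content of the case distinction in the $*$-product formula, since $q_m-a$ is a slice-preserving-type factor in the relevant variable and its vanishing set is a product of the hyperplane $\{q_m=a\}$ with everything else.
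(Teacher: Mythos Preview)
Your overall architecture (direct check for the reverse implication, Euclidean division plus the Identity Principle for the forward one) matches the paper's proof. However, your reverse implication contains a genuine error, and it propagates into the justification you give in the forward direction.

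You claim that the one-variable formula $P*Q(q)=P(q)\cdot Q(P(q)^{-1}qP(q))$ can be applied ``in the variable $q_m$ with all other variables frozen,'' and conclude that $(q_m-a)*P_m$ vanishes on all of $\mathbb H^{m-1}\times\{a\}\times\mathbb H^{n-m}$, so that the hypothesis $c_\ell\in C_a$ is superfluous. This is false. Freezing the other variables means \emph{evaluating} them, and evaluation is not a $*$-homomorphism in several variables; the resulting one-variable function of $q_m$ is not in general the one-variable $*$-product of $q_m-a$ with anything. The paper itself supplies the counterexample in Remark~\ref{valutazione}: with $m=1$, $a=i$, $P_1(q_1,q_2)=q_2-j$, one has
\[
(q_1-i)*(q_2-j)=q_1q_2-q_1 j-q_2 i+k,
\]
and at $q_1=i$ this equals $iq_2-q_2 i$, which is zero only for $q_2\in C_i=\mathbb C_i$, not for all $q_2\in\mathbb H$. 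So the $C_a$ constraint on the later variables is essential, not cosmetic.

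The paper's argument for the reverse direction is instead a direct monomial computation: writing $P_m=\sum q_1^{k_1}\cdots q_n^{k_n}b_{k_1\cdots k_n}$ and expanding $(q_m-a)*P_m$, the term coming from $q_m*P_m$ contributes $q_1^{k_1}\cdots a^{k_m+1}u_{m+1}^{k_{m+1}}\cdots u_n^{k_n}b_{k_1\cdots k_n}$ at a point with $q_m=a$, $q_\ell=u_\ell\in C_a$ for $\ell>m$, while the term from $a*P_m$ contributes $q_1^{k_1}\cdots a^{k_m}u_{m+1}^{k_{m+1}}\cdots u_n^{k_n}\,a\,b_{k_1\cdots k_n}$. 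These cancel precisely because each $u_\ell$ commutes with $a$, allowing the extra factor $a$ to slide past $u_{m+1}^{k_{m+1}}\cdots u_n^{k_n}$. Once you fix this, your forward direction goes through exactly as you wrote it: at points of $\mathbb H^{m-1}\times\{a\}\times(C_a)^{n-m}$ the term $(q_m-a)*Q$ vanishes by the \emph{corrected} computation, hence $R_0$ vanishes on $\mathbb H^{m-1}\times(C_a)^{n-m}$, and Theorem~\ref{Id} forces $R_0\equiv 0$.
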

\begin{proof}
	Let $P_m(q_1,\ldots, q_n):=
	\sum\limits_{\substack{k_j=0,\ldots,N_j  \\ j=1,\ldots,n}}q_1^{k_1}\cdots q_n^{k_n} b_{k_1\ldots k_n}$ be a regular polynomial in  $\mathbb{H}[q_1,\ldots, q_n]$
	and consider $P=(q_m-a)*P_m$. Then 
	
	\begin{eqnarray*}
		P(q_1,\ldots, q_n)&=&q_m*P_m(q_1,\ldots, q_n)-a*P_m(q_1,\ldots, q_n)\\
		&=&\sum \limits_{\substack{k_j=0,\ldots, N_j  \\ j=1,\ldots,n}} q_1^{k_1}\cdots q_m^{k_m+1} \cdots q_n^{k_n} b_{k_1 \ldots k_n}-\sum\limits_{\substack{k_j=0,\ldots,N_j  \\ j=1,\ldots,n}}q_1^{k_1}\cdots q_m^{k_m} \cdots q_n^{k_n}\cdot a \cdot b_{k_1\ldots k_n},	
	\end{eqnarray*}
	and hence, for any $(q_1,\ldots, q_{m-1}, a, u_{m+1},\ldots, u_n)\in \mathbb{H}^{m-1}\times\{a\}\times (C_a)^{n-m}$,
		\begin{eqnarray*}
		&&P(q_1,\ldots, q_{m-1}, a, u_{m+1},\ldots, u_n)\\ 
		&=&\sum\limits_{\substack{k_j=0,\ldots,N_j  \\ j=1,\ldots,n}}q_1^{k_1}\cdots a^{k_m+1} \cdots u_n^{k_n} \cdot b_{k_1\cdots k_n}-\sum\limits_{\substack{k_j=0,\ldots,N_j  \\ j=1,\ldots,n}}q_1^{k_1}\cdots a^{k_m}\cdots u_n^{k_n}\cdot a\cdot b_{k_1\cdots k_n}\\
		&=&0.	
	\end{eqnarray*}
	
	On the other hand, suppose that a regular polynomial $P$ vanishes on $\mathbb{H}^{m-1}\times\{a\}\times (C_a)^{n-m}$. Then, performing the $*$-division of $P$ by $q_m-a$ as in Proposition \ref{euclidean}, we obtain
	\[P(q_1,\ldots, q_n)=(q_m-a)*P_m(q_1,\ldots, q_n)
	+R(q_1,q_2,\ldots, q_{m-1},q_{m+1},\ldots, q_n).\]
	Now, for any $(q_1,\ldots, q_{m-1}, a, u_{m+1},\ldots, u_n)\in \mathbb{H}^{m-1}\times\{a\}\times (C_a)^{n-m}$ we have that
		$P(q_1,\ldots, q_{m-1}, a, u_{m+1},\ldots, u_n)=0$
	and, as in the previous considerations, also
	\[[(q_m-a)*P_m(q_1,\ldots, q_n)]|_{(q_1,\ldots, q_{m-1}, a, u_{m+1},\ldots, u_n)}=0.\]
Hence $R$ vanishes identically on $\mathbb{H}^{m-1}\times (C_a)^{n-m}$.
		Thanks to the Identity Principle \ref{Id}, we get that $R$ is identically zero and we conclude the proof.
	\end{proof}

We now want to establish a relation between zeros of regular polynomials and ideals in $\mathbb{H}[q_1,\ldots,q_n]$. 
	If $(a_1,\ldots,a_n)\in {\mathbb{H}}^n$, then denote by
\[\mathcal I_{(a_1,\ldots,a_n)}=\{(q_1-a_1)*P_1+\cdots+(q_n-a_n)*P_n \ | \ P_1,\ldots, P_n \in\mathbb{H}[q_1,\ldots,q_n]\}.\]
Thanks to the properties of the $*$-product, it is not difficult to
prove that $\mathcal I_{(a_1,\ldots,a_n)}$ is a right ideal, generated (via the
$*$-product) by $q_1-a_1, q_2-a_2, \ldots,q_n-a_n$ in $\mathbb{H}[q_1,\ldots, q_n]$.
{\begin{remark}\label{idealicorrispondenti}
 The isomorphism $\varphi$ introduced in \eqref{iso} maps the ideal $\mathcal I_{(a_1,\ldots,a_n)}\subseteq \mathbb H[q_1,\ldots,q_n]$ to the ideal $I_{\overline{a}}$ in $\mathbb H[x_1,\ldots,x_n]$, where $\overline{a}=(\overline{a_1},\ldots,\overline{a_n})$ (as in \cite{israeliani-1}).

\end{remark}

{From Lemma 2.1 and Proposition 2.2 in \cite{israeliani-1}}, taking into account Remark \ref{idealicorrispondenti}, we directly get: 
	
	\begin{proposition}[{\cite[Lemma 2.1 and Proposition 2.2]{israeliani-1}}]\label{idealeproprio}
	Let $(a_1,\ldots,a_n)\in {\mathbb{H}}^n$. 
	\begin{enumerate}
		\item  If $a_la_m=a_ma_l$ for any $1\le l,m \le n$,  then $\mathcal I_{(a_1,\ldots,a_n)}$ is a maximal ideal in $\mathbb{H}[q_1,\ldots,q_n]$;
		\item if there exist $l,m \in\{1,\ldots, n\}$ such that $a_la_m\neq a_ma_l$, 
		then $\mathcal I_{(a_1,\ldots,a_n)}=\mathbb{H}[q_1,\ldots,q_n]$.
	\end{enumerate} 
	\end{proposition}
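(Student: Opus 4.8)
The plan is to deduce the statement by transporting it through the ring isomorphism $\varphi$ of \eqref{iso} and then invoking the quoted results of Alon--Paran. The first step is to record what $\varphi$ does to the relevant structures. From $\varphi(P*Q)=\varphi(Q)\cdot\varphi(P)$ (the display preceding Remark~\ref{isomorph}), together with additivity and bijectivity, $\varphi$ is an anti-isomorphism of rings sending $1$ to $1$; hence it maps the lattice of \emph{right} ideals of $(\mathbb H[q_1,\ldots,q_n],+,*)$ order-isomorphically onto the lattice of \emph{left} ideals of $(\mathbb H[x_1,\ldots,x_n],+,\cdot)$, in particular sending the whole ring to the whole ring and maximal right ideals to maximal left ideals. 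By Remark~\ref{idealicorrispondenti} we have $\varphi(\mathcal I_{(a_1,\ldots,a_n)})=I_{\overline a}$, where $\overline a=(\overline{a_1},\ldots,\overline{a_n})$ and $I_{\overline a}$ is the left ideal of $\mathbb H[x_1,\ldots,x_n]$ considered in \cite{israeliani-1}.

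The second step is the observation that conjugation does not affect the commutativity pattern of the entries: since $x\mapsto\overline x$ is an anti-automorphism of $\mathbb H$, one has $\overline{a_l a_m}=\overline{a_m}\,\overline{a_l}$, so $a_l a_m=a_m a_l$ if and only if $\overline{a_l}\,\overline{a_m}=\overline{a_m}\,\overline{a_l}$. The two cases then follow at once. If all the $a_\ell$ pairwise commute, then so do all the $\overline{a_\ell}$, and \cite[Lemma~2.1]{israeliani-1} gives that $I_{\overline a}$ is a maximal left ideal of $\mathbb H[x_1,\ldots,x_n]$; pulling this back through $\varphi^{-1}$ shows that $\mathcal I_{(a_1,\ldots,a_n)}$ is a maximal right ideal of $\mathbb H[q_1,\ldots,q_n]$. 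If instead $a_l a_m\neq a_m a_l$ for some $l,m$, then $\overline{a_l}\,\overline{a_m}\neq\overline{a_m}\,\overline{a_l}$, and \cite[Proposition~2.2]{israeliani-1} gives $I_{\overline a}=\mathbb H[x_1,\ldots,x_n]$; applying $\varphi^{-1}$ yields $\mathcal I_{(a_1,\ldots,a_n)}=\mathbb H[q_1,\ldots,q_n]$.

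The only real care needed---hence what I regard as the main (mild) obstacle---is to keep track consistently of the order-reversal built into $\varphi$: one must match ``right ideal'' on the slice-regular side with ``left ideal'' on the central-variable side (which is exactly why \cite{israeliani-1} works with left ideals and we with right ones), and make sure that passing from $a_\ell$ to $\overline{a_\ell}$ neither creates nor destroys commutation relations, as recorded above. As a remark, case (2) also admits a self-contained argument avoiding \cite{israeliani-1}: since the variables $q_l,q_m$ commute with each other, a direct expansion of the $*$-products gives $(q_l-a_l)*(q_m-a_m)-(q_m-a_m)*(q_l-a_l)=a_la_m-a_ma_l$, which is a nonzero, hence invertible, constant lying in $\mathcal I_{(a_1,\ldots,a_n)}$; $*$-multiplying it on the right by its inverse shows $1\in\mathcal I_{(a_1,\ldots,a_n)}$, so $\mathcal I_{(a_1,\ldots,a_n)}=\mathbb H[q_1,\ldots,q_n]$.
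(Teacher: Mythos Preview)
Your proposal is correct and follows essentially the same route as the paper: the paper simply states that the proposition is obtained ``from Lemma 2.1 and Proposition 2.2 in \cite{israeliani-1}, taking into account Remark~\ref{idealicorrispondenti},'' and your argument spells out precisely this transport through the anti-isomorphism $\varphi$ (with the needed bookkeeping about right/left ideals and the preservation of commutation under conjugation). Your additional self-contained computation for part~(2) is a nice bonus not present in the paper.
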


%
%

Moreover, recalling Formula \eqref{valori} and Remark \ref{idealicorrispondenti}, we immediately obtain:
\begin{proposition}[{\cite[Proposition 2.2]{israeliani-1}}]\label{idealemassimale1}
	Let $(a_1,\ldots,a_n)\in {\mathbb{H}}^n$ with $a_la_m=a_ma_l$ for any $1\le l,m \le n$,  and let $\mathcal I_{(a_1,\ldots,a_n)}$ be
	the right ideal generated by $q_1-a_1, \ldots, q_n-a_n$.
	Then a regular polynomial $P\in \mathbb{H}[q_1,\ldots,q_n]$ belongs to $\mathcal I_{(a_1,\ldots,a_n)}$ if and only if $P(a_1,\ldots,a_n)=0$.
\end{proposition}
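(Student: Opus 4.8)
The plan is to deduce the statement from its counterpart over central variables by transporting everything through the ring isomorphism $\varphi$ of \eqref{iso}. First I would use Remark \ref{idealicorrispondenti}: since $\varphi$ is a ring isomorphism carrying each generator $q_j-a_j$ to $x_j-\overline{a_j}$, it maps $\mathcal I_{(a_1,\ldots,a_n)}$ bijectively onto the ideal $I_{\overline a}$ of $\mathbb{H}[x_1,\ldots,x_n]$ generated by $x_1-\overline{a_1},\ldots,x_n-\overline{a_n}$, where $\overline a=(\overline{a_1},\ldots,\overline{a_n})$. Next I would check that the hypothesis survives conjugation: from $a_\ell a_m=a_m a_\ell$ and $\overline{xy}=\overline y\,\overline x$ one gets $\overline{a_m}\,\overline{a_\ell}=\overline{a_\ell a_m}=\overline{a_m a_\ell}=\overline{a_\ell}\,\overline{a_m}$, so $\overline a$ again has pairwise commuting components, and hence \cite[Proposition 2.2]{israeliani-1} applies to it.

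The argument is then a chain of equivalences. By the first step, $P\in\mathcal I_{(a_1,\ldots,a_n)}$ if and only if $\varphi(P)\in I_{\overline a}$; by \cite[Proposition 2.2]{israeliani-1}, the latter holds if and only if $(\varphi(P))(\overline{a_1},\ldots,\overline{a_n})=0$; by Formula \eqref{valori}, which is available precisely because $(a_1,\ldots,a_n)$ has commuting coordinates, $(\varphi(P))(\overline{a_1},\ldots,\overline{a_n})=\varphi\bigl(P(a_1,\ldots,a_n)\bigr)$; and since $P(a_1,\ldots,a_n)$ is a quaternion on which $\varphi$ acts simply as the conjugation $c\mapsto\overline c$, an injection fixing $0$, this equals $0$ if and only if $P(a_1,\ldots,a_n)=0$. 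Concatenating gives the desired equivalence.

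I do not expect a genuine obstacle here: the mathematical content lies entirely in \cite{israeliani-1}, and what remains is only the verification that the dictionary furnished by $\varphi$ and \eqref{valori} is faithful — the delicate points being that the commutativity hypothesis is preserved under conjugating the $a_j$ and that the evaluation $P(a_1,\ldots,a_n)$ lands among the constants, where $\varphi$ is nothing but conjugation. As a self-contained alternative one can argue directly: for the nontrivial implication, write $P=\sum_{k=1}^n(q_k-a_k)*P_k$ with $P_k\in\mathbb{H}[q_1,\ldots,q_n]$ and note, by Proposition \ref{mlineare} applied with $m=k$ to each summand, that $(q_k-a_k)*P_k$ vanishes on $\mathbb{H}^{k-1}\times\{a_k\}\times(C_{a_k})^{n-k}$, a set containing $(a_1,\ldots,a_n)$ exactly because $a_k$ commutes with $a_{k+1},\ldots,a_n$, so that $P(a_1,\ldots,a_n)=0$; for the converse, divide $P$ successively by $q_n-a_n,q_{n-1}-a_{n-1},\ldots,q_1-a_1$ using Proposition \ref{euclidean}, each time invoking Proposition \ref{mlineare} to see that the remainder must vanish at the corresponding sub-tuple, until only a constant remainder survives, necessarily equal to $P(a_1,\ldots,a_n)=0$, and collect the quotients to exhibit $P\in\mathcal I_{(a_1,\ldots,a_n)}$.
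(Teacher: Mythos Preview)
Your primary argument via $\varphi$, Remark \ref{idealicorrispondenti}, and Formula \eqref{valori} is exactly the paper's proof: the sentence preceding the proposition reads ``recalling Formula \eqref{valori} and Remark \ref{idealicorrispondenti}, we immediately obtain'', and you have simply spelled out those details correctly, including the check that conjugation preserves the commutativity hypothesis. Your self-contained alternative is also sound and is essentially the argument the paper uses for Proposition \ref{zeri}, specialized to the commuting case so that the final constant remainder is forced to vanish.
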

}

Let us now investigate what kind of information we obtain on the $*$--factorization of  a regular polynomial which vanishes at a generic point $(a_1,\ldots,a_n)$, without any assumption on the commutativity of its components.

\begin{proposition}\label{zeri}
	A regular polynomial in $P\in
	\mathbb{H}[q_1,\ldots,q_n]$ vanishes at $(a_1,\ldots,a_n)\in \mathbb{H}^n$ if and only if there exist
	$P_k\in \mathbb H[q_1,\ldots,q_k]$ for any $k=1,\ldots,n$ such that
	\begin{equation*}
P(q_1,\ldots,q_n)=\sum_{k=1}^n(q_k-a_k)*P_k(q_1, \ldots,q_k).
	\end{equation*}	
\end{proposition}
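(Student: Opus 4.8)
The plan is to prove the two implications separately, using the Euclidean $*$-division of Proposition \ref{euclidean} as the main tool and proceeding by induction on the number of variables $n$.

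\textbf{The easy direction ($\Leftarrow$).} Suppose $P(q_1,\ldots,q_n)=\sum_{k=1}^n(q_k-a_k)*P_k(q_1,\ldots,q_k)$ with $P_k\in\mathbb{H}[q_1,\ldots,q_k]$. I want to evaluate at $(a_1,\ldots,a_n)$. The key observation is that each summand $(q_k-a_k)*P_k(q_1,\ldots,q_k)$ vanishes at this point: since $P_k$ involves only the variables $q_1,\ldots,q_k$, the factor $q_k-a_k$ sits to the left of a polynomial in $q_1,\ldots,q_k$, and by the same monomial computation carried out in the proof of Proposition \ref{mlineare} (writing $q_k*P_k - a_k*P_k$ in terms of monomials $q_1^{\ell_1}\cdots q_k^{\ell_k}$ and substituting $q_k=a_k$), the two pieces cancel. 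More precisely, after substituting $q_1=a_1,\ldots,q_k=a_k$, a monomial $q_1^{\ell_1}\cdots q_k^{\ell_k}c$ becomes $a_1^{\ell_1}\cdots a_k^{\ell_k}c$, so $[q_k*P_k](a_1,\ldots,a_k)=\sum a_1^{\ell_1}\cdots a_k^{\ell_k+1}b_{\ell}$ while $[a_k*P_k](a_1,\ldots,a_k)=\sum a_1^{\ell_1}\cdots a_k^{\ell_k}a_k b_{\ell}$, and these agree termwise. Hence $P(a_1,\ldots,a_n)=0$. Note here it is essential that $P_k$ does not involve $q_{k+1},\ldots,q_n$, so the factor $q_k-a_k$ really is the leftmost occurrence of $q_k$.

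\textbf{The hard direction ($\Rightarrow$) by induction on $n$.} For $n=1$ this is the classical fact that $P(a_1)=0$ iff $q_1-a_1$ left-$*$-divides $P$, which follows from Euclidean $*$-division by the monic polynomial $q_1-a_1$: we get $P=(q_1-a_1)*Q+R$ with $R\in\mathbb{H}$ a constant, and evaluating at $a_1$ (using that $[(q_1-a_1)*Q](a_1)=0$ as above) gives $R=P(a_1)=0$. For the inductive step, assume $P\in\mathbb{H}[q_1,\ldots,q_n]$ vanishes at $(a_1,\ldots,a_n)$. Apply Proposition \ref{euclidean} with the monic degree-one polynomial $q_n-a_n$ in the variable $q_n$: this yields
\[
P(q_1,\ldots,q_n)=(q_n-a_n)*P_n(q_1,\ldots,q_n)+R(q_1,\ldots,q_{n-1}),
\]
with $P_n\in\mathbb{H}[q_1,\ldots,q_n]$ and $R\in\mathbb{H}[q_1,\ldots,q_{n-1}]$. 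Since the first summand vanishes at $(a_1,\ldots,a_n)$ by the monomial computation above (again $q_n-a_n$ is leftmost), we get $R(a_1,\ldots,a_{n-1})=0$. By the inductive hypothesis applied to $R\in\mathbb{H}[q_1,\ldots,q_{n-1}]$, there exist $P_k\in\mathbb{H}[q_1,\ldots,q_k]$ for $k=1,\ldots,n-1$ with $R=\sum_{k=1}^{n-1}(q_k-a_k)*P_k(q_1,\ldots,q_k)$. Substituting back gives the desired decomposition $P=\sum_{k=1}^{n}(q_k-a_k)*P_k$, where $P_n$ is the quotient just obtained.

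\textbf{Main obstacle.} The only subtle point — and the step I would be most careful about — is verifying that $[(q_k-a_k)*P_k](a_1,\ldots,a_k)=0$ whenever $P_k$ involves only $q_1,\ldots,q_k$; this is exactly where the noncommutativity of the $*$-product matters and where the restriction on the variables appearing in each $P_k$ is used. One has to write out the $*$-product explicitly on monomials and check that the substitution $q_j\mapsto a_j$ commutes appropriately with the reindexing $q_k^{\ell_k}\mapsto q_k^{\ell_k+1}$; this is the same computation already displayed in the proof of Proposition \ref{mlineare} (restricted there to a single linear factor), so it can be invoked rather than redone. Everything else is a clean induction driven by Euclidean $*$-division, with the Identity Principle not even needed here since division by a degree-one monic polynomial leaves a remainder with one fewer variable that can be handled directly by evaluation.
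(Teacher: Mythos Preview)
Your proof is correct and follows essentially the same route as the paper's: repeated Euclidean $*$-division by $q_n-a_n$, then $q_{n-1}-a_{n-1}$, etc., with the vanishing of each piece $(q_k-a_k)*P_k$ at the point handled via the monomial computation of Proposition \ref{mlineare}. The only cosmetic difference is that you package the iteration as an induction on $n$ and spell out the $\Leftarrow$ direction explicitly, whereas the paper unrolls the divisions down to a one-variable remainder $R_2(q_1)$ and then invokes the one-variable factorization; both arguments are the same in substance.
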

\begin{proof} We start performing the $*$-division of $P$ by $(q_n-a_n)$ as in Proposition \ref{euclidean}; we thus obtain 
	\[P(q_1,\ldots,q_n)=(q_n-a_n)*P_n(q_1,\ldots,q_n)+R_n(q_1,\ldots,q_{n-1}).\]
	Notice that $R_n(q_1,\ldots, q_{n-1})$ does not depend on $q_n$ since
	$\deg_{q_n}R<1$. If we now divide $R_{n}$ by $(q_{n-1}-a_{n-1})$, we obtain
	\[P(q_1,\ldots,q_n)=(q_n-a_n)*P_n(q_1,\ldots,q_n)+(q_{n-1}-a_{n-1})*P_{n-1}(q_1,\ldots,q_{n-1})+R_{n-1}(q_1,\ldots,q_{n-2}).\]
	Iterating this process, at the $(n-2)$-th step we get
\begin{align*}
P(q_1,\ldots,q_n)&=(q_n-a_n)*P_n(q_1,\ldots,q_n)+(q_{n-1}-a_{n-1})*P_{n-1}(q_1,\ldots,q_{n-1})+\\
&+\cdots+(q_{2}-a_{2})*P_{2}(q_1,q_2)+R_{2}(q_1)
\end{align*}
where $R_{2}$ is a one-variable regular polynomial in $q_1$.
Thanks to Proposition \ref{mlineare}, evaluating at $(a_1,\ldots,a_n)$ the previous equality gives
\[0=P(a_1,\ldots,a_n)=R_{2}(a_1)\]
which, recalling the one-variable theory (see \cite[Proposition 3.18]{libroGSS}), 
implies that there exists $P_1\in \mathbb{H}[q_1]$ such that $R_{2}(q_1)=(q_1-a_1)*P_1(q_1)$.
Therefore we prove the statement.
\end{proof}
 Let $(a_1,\ldots,a_n)\in \mathbb{H}^n$, and let us denote
  by $E_{(a_1,\ldots,a_n)}$ the set of regular polynomials in
  $\mathbb{H}[q_1,\ldots,q_n]$ which vanish at $(a_1,\ldots,a_n)$,
  namely
	\[E_{(a_1,\ldots,a_n)}:=\left\{\sum_{k=1}^n(q_k-a_k)*P_k(q_1,\ldots,q_k) \ : P_k \in \mathbb{H}[q_1, \ldots,q_k] \text{ for any $k=1,\ldots,n$}\right\}.\] 

        \begin{proposition}\label{idealeono}
		Let $(a_1,\ldots,a_n) \in \mathbb{H}^n$. The set $E_{(a_1,\ldots,a_n)}$ of regular
                polynomials vanishing at $(a_1,\ldots,a_n)$ is an
                ideal if and only if $a_la_m=a_ma_l$ for any
                $l,m=1,\ldots,n$.
	\end{proposition}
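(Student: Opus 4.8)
The plan is to prove both implications. For the easy direction, suppose $a_\ell a_m = a_m a_\ell$ for all $\ell, m$. Then by Proposition \ref{idealemassimale1}, $E_{(a_1,\ldots,a_n)}$ coincides with the maximal right ideal $\mathcal I_{(a_1,\ldots,a_n)}$ (a regular polynomial vanishes at $(a_1,\ldots,a_n)$ if and only if it lies in that ideal), and in particular it is an ideal. So this direction is immediate from the results already established.

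For the harder direction, I would argue by contraposition: assume there exist indices $\ell, m$ with $a_\ell a_m \neq a_m a_\ell$, and exhibit a pair of regular polynomials showing $E_{(a_1,\ldots,a_n)}$ fails to be closed under left (or right) multiplication by arbitrary regular polynomials. The natural candidate starts from a simple element of $E_{(a_1,\ldots,a_n)}$, for instance $q_\ell - a_\ell$, which vanishes at the point in question. Multiplying on the left by $(q_m - a_m)$ gives $(q_m - a_m)*(q_\ell - a_\ell)$; one evaluates this at $(a_1,\ldots,a_n)$ — using the evaluation rule for the $*$-product recalled in Section \ref{sec1}, namely that $(F*G)(p) = F(p)\cdot G(F(p)^{-1} p F(p))$ when $F(p)\neq 0$ — and checks that the value is a nonzero quaternion precisely because $a_m$ and $a_\ell$ do not commute. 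Concretely, evaluating the expanded form $q_m q_\ell - q_m a_\ell - q_\ell a_m + a_m a_\ell$ at $q_\ell = a_\ell$, $q_m = a_m$ gives $a_m a_\ell - a_m a_\ell - a_\ell a_m + a_m a_\ell = a_m a_\ell - a_\ell a_m \neq 0$ (the cross term $q_\ell a_m$ at $q_\ell = a_\ell$ contributes $a_\ell a_m$, not $a_m a_\ell$, which is the whole point). Hence $(q_m-a_m)*(q_\ell-a_\ell) \notin E_{(a_1,\ldots,a_n)}$ even though $(q_\ell - a_\ell) \in E_{(a_1,\ldots,a_n)}$, so $E_{(a_1,\ldots,a_n)}$ is not a left ideal; a symmetric computation multiplying $(q_m - a_m) \in E_{(a_1,\ldots,a_n)}$ on the right handles the right-ideal case if one wants to rule that out too.

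A small technical point I would need to handle carefully is the bookkeeping when $\ell$ and $m$ are arbitrary among $1,\ldots,n$ and the other coordinates $a_r$ are present: one should observe that the monomials not involving $q_\ell$ or $q_m$ play no role, so it suffices to verify the two-variable computation and then note that evaluating the full $n$-variable product at $(a_1,\ldots,a_n)$ reduces to exactly this two-variable evaluation (the other variables are simply set to their values $a_r$ and factor through harmlessly, by the same mechanism as in the proof of Proposition \ref{mlineare}). The main obstacle, such as it is, is not conceptual but notational: making the $*$-product evaluation at a point with non-commuting coordinates fully rigorous, i.e. being careful that $q^k$ evaluated gives $a^k$ but that the $*$-product reorders coefficients in a way that does not commute past the $a_r$'s. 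Once the two-variable toy computation above is pinned down, the general case follows with no new ideas.
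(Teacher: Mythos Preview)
Your forward direction coincides with the paper's. For the converse the paper argues more abstractly: since each $q_k-a_k$ lies in $E_{(a_1,\ldots,a_n)}$, if the latter were a right ideal it would contain $\mathcal I_{(a_1,\ldots,a_n)}$, which by Proposition~\ref{idealeproprio} is all of $\mathbb H[q_1,\ldots,q_n]$ --- a contradiction. Your direct counterexample is essentially the computation underlying that proposition, so the two routes are morally the same.

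There is, however, a genuine slip in your computation. The value of $(q_m-a_m)*(q_\ell-a_\ell)$ at $(a_1,\ldots,a_n)$ depends on the relative order of the indices: the $*$-product rewrites $q_m*q_\ell$ as the ordered monomial $q_{\min(m,\ell)}q_{\max(m,\ell)}$, which evaluates to $a_{\min(m,\ell)}a_{\max(m,\ell)}$, not to $a_m a_\ell$ in general. If $m>\ell$ the expansion is $q_\ell q_m - q_m a_\ell - q_\ell a_m + a_m a_\ell$, and substituting gives $a_\ell a_m - a_m a_\ell - a_\ell a_m + a_m a_\ell = 0$, so your chosen product actually \emph{does} vanish at the point and the argument collapses. (This is consistent with Proposition~\ref{zeri}: with $m>\ell$ one has $q_\ell-a_\ell\in\mathbb H[q_1,\ldots,q_m]$, so the product is of the admissible form $(q_m-a_m)*P_m$.) The fix is immediate --- declare without loss of generality that $m<\ell$ --- but it is not optional. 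Once that is done, note that both factors already lie in $E_{(a_1,\ldots,a_n)}$, so this single product simultaneously rules out $E_{(a_1,\ldots,a_n)}$ being a left or a right ideal; the ``symmetric computation'' you mention is unnecessary.
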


        \begin{proof}
		If $(a_1,\ldots,a_n)\in \mathbb{H}^n$ is such that
                $a_la_m=a_ma_l$ for any $l,m=1,\ldots,n$, then
                $E_{(a_1,\ldots,a_n)}$ coincides with the ideal
                $\mathcal I_{(a_1,\ldots,a_n)}$. Indeed
                $E_{(a_1,\ldots,a_n)} \subseteq \mathcal
                I_{(a_1,\ldots,a_n)}$ by definition. On the other
                hand, by Proposition \ref{idealemassimale1} all regular
                polynomials in $\mathcal{I }_{(a_1,\ldots,a_n)}$
                vanish at $(a_1,\ldots,a_n)$ and hence $\mathcal
                I_{(a_1,\ldots,a_n)} \subseteq E_{(a_1,\ldots,a_n)}$.
		
		Let now $(a_1,\ldots,a_n)\in \mathbb{H}^n$ be such
                that $a_la_m\neq a_ma_l$ for some
                $l,m\in\{1,\ldots,n\}$ and suppose that
                $E_{(a_1,\ldots,a_n)}$ is an ideal. Since the regular
                polynomials $q_1 -a_1, \ldots,q_n - a_n$ belong to
                $E_{(a_1,\ldots,a_n)}$, the same holds for the ideal
                generated by them $\mathcal{I
                }_{(a_1,\ldots,a_n)}=\langle q_k-a_k \ :
                \ k=1,\ldots,n \rangle$. Recalling Proposition
                \ref{idealeproprio}, $\mathcal{I
                }_{(a_1,\ldots,a_n)}=\mathbb H[q_1,\ldots,q_n]
                \subseteq E_{(a_1,\ldots,a_n)}$, a contradiction.
		
	\end{proof}
	%
	%
	\begin{remark}\label{zeritrasmessi}
			        The previous Proposition can be read as follows: if a
        regular polynomial $P$ vanishes at $(a_1,\ldots,a_n)$ with
        $a_la_m=a_ma_l$ for any $l,m=1,\ldots,n$, then $P*Q$ still
        vanishes at $(a_1,\ldots,a_n)$ for any $Q$ in $\mathbb
        H[q_1,\ldots,q_n]$. The same is not in general true if a zero
        does not have commuting components.  
        \end{remark}
        The previous remark is also relevant
        when considering the zeros of the symmetrization of a regular 
        polynomial.  Recall that if a slice regular function in one
        quaternionic variable vanishes at $a \in \mathbb{H}$, then
        also its symmetrization does.  In several quaternionic variables, we can
        show that the situation is the same only assuming the
        commutativity of the components of the assigned zero.
	\begin{corollary}\label{zerisym}
		If $P\in \mathbb{H}[q_1, \ldots, q_n]$ vanishes at
                $(a_1,\ldots, a_n)$, with $a_la_m=a_ma_l$ for any
                $l,m=1,\ldots,n$, then also its symmetrization does
                since $P^s=P*P^c$.
	\end{corollary}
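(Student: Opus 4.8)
The statement is an immediate consequence of Remark \ref{zeritrasmessi}, and the plan is simply to make that dependence precise. Since the components $a_1,\ldots,a_n$ pairwise commute, Proposition \ref{idealeono} applies and tells us that the set $E_{(a_1,\ldots,a_n)}$ of regular polynomials vanishing at $(a_1,\ldots,a_n)$ is an ideal; in particular it is a right ideal. The hypothesis is precisely that $P\in E_{(a_1,\ldots,a_n)}$, and $P^c$ is a regular polynomial in $\mathbb{H}[q_1,\ldots,q_n]$, so the right-ideal property yields $P^s=P*P^c\in E_{(a_1,\ldots,a_n)}$, that is, $P^s(a_1,\ldots,a_n)=0$. (One may equally use $P^s=P^c*P$ together with the corresponding left-sided statement, but the right-sided version is the one already established.)

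There is no genuine obstacle here: the whole content is carried by Proposition \ref{idealeono}, and the role of this corollary is only to record the consequence for symmetrizations, in contrast with the one-variable case where no commutativity assumption is needed. What is worth emphasizing in the write-up is where the hypothesis $a_la_m=a_ma_l$ actually enters: it is exactly the condition that makes $E_{(a_1,\ldots,a_n)}$ closed under $*$-multiplication on the right (cf. the example with $ab\neq ba$), so that the vanishing of $P$ is transmitted to $P*P^c$; dropping it, one should not expect $P^s$ to vanish at a zero of $P$ with non-commuting components.

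For completeness one could also give a more hands-on argument avoiding the explicit identification of $E_{(a_1,\ldots,a_n)}$ with an ideal: start from the decomposition $P=\sum_{k=1}^n(q_k-a_k)*P_k$ furnished by Proposition \ref{zeri}, multiply on the right by $P^c$ to get $P^s=\sum_{k=1}^n(q_k-a_k)*\bigl(P_k*P^c\bigr)$, and observe that by Proposition \ref{mlineare} each summand $(q_k-a_k)*\bigl(P_k*P^c\bigr)$ vanishes on $\mathbb{H}^{k-1}\times\{a_k\}\times (C_{a_k})^{n-k}$, a set which contains $(a_1,\ldots,a_n)$ precisely because $a_j\in C_{a_k}$ for $j>k$ by the commutativity hypothesis. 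This is, however, just a re-proof of the right-ideal property in the case at hand, so invoking Proposition \ref{idealeono} (via Remark \ref{zeritrasmessi}) is the cleaner route and is the one I would adopt.
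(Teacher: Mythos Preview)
Your proof is correct and follows essentially the same route as the paper: the corollary is meant as an immediate application of Remark \ref{zeritrasmessi} (equivalently, of Proposition \ref{idealeono}), using that $P^s=P*P^c$ with $P$ in the right ideal $E_{(a_1,\ldots,a_n)}$. Your alternative hands-on argument via Propositions \ref{zeri} and \ref{mlineare} is also valid but, as you note, merely re-derives the right-ideal closure in this special case.
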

	This is not necessarily true if  $a_la_m \neq a_ma_l$ for some $l,m \in \{1,\ldots,n\}$. 
	As an example, consider
	$P(q_1,q_2)=q_1q_2-k$, whose symmetrization is $P^s(q_1,q_2)=q_1^2q_2^2+1$. We have $P(i,j)=0$, while $P^s(i,j)=2.$

\subsection{The two variables case}

Using the properties of regular polynomials in one quaternionic
variable, we are able to prove several results concerning the
vanishing sets of regular polynomials in two  quaternionic variables which enlighten
different interesting phenomena passing from one to several quaternionic
variables.

We begin by assigning the value of the first variable.
The next proposition reminds the geometric origin of spherical zeros for slice regular functions in one quaternionic variable.

  \begin{proposition}
    Let $a\in\mathbb{H}\setminus\mathbb{R}$.
If $P(q_1,q_2)$ vanishes on $\{a\}\times C_a$ and also at $(a,b)$, with $b\notin C_a$, then $P$ vanishes 
on $\{a\}\times \mathbb{S}_b$.
  
  \end{proposition}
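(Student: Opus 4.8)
The plan is to fix the slice $\mathbb{C}_{J_a}$ containing $a$ and reduce everything to the one–variable theory applied to the regular polynomial in the single variable $q_2$ obtained by freezing $q_1=a$. Concretely, I would first apply Proposition \ref{mlineare} with $m=1$, $n=2$: since $P$ vanishes on $\{a\}\times C_a=\{a\}\times\mathbb{C}_{J_a}$, there exists $P_2\in\mathbb{H}[q_1,q_2]$ with $P(q_1,q_2)=(q_1-a)*P_2(q_1,q_2)$. Evaluating at $q_1=a$ kills this factor in the naive pointwise sense only when the value $(q_1-a)$ is zero, which it is, so $P(a,q_2)\equiv 0$ as a polynomial in $q_2$ — but wait, that would already give vanishing on all of $\{a\}\times\mathbb{H}$, contradicting $b\notin C_a$ being singled out. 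So the correct reading is that Proposition \ref{mlineare} requires vanishing on $\{a\}\times C_a$ \emph{as a set identity}, and from $P=(q_1-a)*P_2$ one does \emph{not} get $P(a,q_2)\equiv 0$ because of the twisting in the $*$-product formula $P*Q(q)=P(q)\cdot Q(P(q)^{-1}q P(q))$. Thus the honest approach is: consider the one–variable regular polynomial $p(q_2):=P(a,q_2)\in\mathbb{H}[q_2]$, obtained by genuine evaluation of the first variable at $a$.

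The key observation is that $p$ is a regular polynomial in $q_2$ which, by hypothesis, vanishes on $C_a=\mathbb{C}_{J_a}$ — that is, on an entire slice — and also at the point $b$. I would then invoke the one–variable structure theorem (Theorem \cite[Theorem 3.13]{libroGSS}, quoted in the excerpt): the zero set of a nonzero regular polynomial in one variable consists of isolated points and isolated $2$-spheres $x+y\mathbb{S}$. Since $p$ vanishes on the whole of $\mathbb{C}_{J_a}$, which is certainly not a union of isolated points and isolated $2$-spheres (it contains a real line's worth of points on each such sphere it meets), the only possibility is $p\equiv 0$ — but that again contradicts the special role of $b$. So I must be more careful: the hypothesis is that $P$ vanishes on $\{a\}\times C_a$ but the problem explicitly allows $b\notin C_a$ to be an \emph{additional} zero, so the content is genuinely that freezing $q_1=a$ does \emph{not} make $P$ vanish identically. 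The resolution: evaluation $q_1\mapsto a$ does not commute with the $*$-product, so ``$P$ vanishes on $\{a\}\times C_a$'' does \emph{not} say $P(a,\cdot)\equiv 0$ on $C_a$ in the pointwise sense used to build $p$ — rather one must use that the second-slot values $b$ ranging over $C_a=\mathbb{C}_{J_a}$ are exactly the ones fixed by conjugation by $P_2(a,b)$-type elements, so the twisting is trivial there. Hence $p$ vanishes on $C_a$ pointwise, and additionally $p(b)=0$.

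Now the real engine: write $p(q_2)=P(a,q_2)$. Vanishing of $p$ on the slice $\mathbb{C}_{J_a}=C_a$ means, by the one–variable factorization (Proposition \cite[Proposition 3.18]{libroGSS}, also quoted), that every $\lambda\in\mathbb{C}_{J_a}$ is a zero; but for a one–variable regular polynomial the zero set on a \emph{single} slice that is all of that slice forces $p\equiv 0$ \emph{unless} the degree in $q_2$ is zero. The only way the statement is non-vacuous is that the hypothesis ``vanishes on $\{a\}\times C_a$'' is weaker than I am reading it. Re-reading: $C_a=\mathbb{C}_{J_a}$ and the claim to prove is vanishing on $\{a\}\times\mathbb{S}_b$, a $2$-sphere. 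The genuinely correct mechanism is the one in the Remark about $P*Q$: each zero of $p=P(a,\cdot)$ in a sphere $\mathbb{S}_{q_2}$ is either a true zero or the conjugate (within that sphere) of a zero of a cofactor. So if $p$ vanishes on $C_a\cap\mathbb{S}_b=\{$two conjugate points$\}$ and also at $b\in\mathbb{S}_b\setminus C_a$, that is \emph{three} distinct points on the $2$-sphere $\mathbb{S}_b$; a one–variable regular polynomial having three zeros on one sphere must vanish on the whole sphere (since on each sphere it either has at most one zero, or vanishes identically on it — the ``either isolated point or whole sphere'' dichotomy per sphere). Therefore $p$ vanishes on all of $\mathbb{S}_b$, i.e.\ $P$ vanishes on $\{a\}\times\mathbb{S}_b$, which is the claim.

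So the proof I would write is: (i) set $p(q_2):=P(a,q_2)$; (ii) note $C_a=\mathbb{C}_{J_a}$ meets $\mathbb{S}_b$ in exactly the two conjugate points $\mathrm{Re}(b)\pm J_a|\mathrm{Im}(b)|$, both distinct from $b$ since $b\notin C_a$ and $b\notin\mathbb{R}$ (as $b\notin C_a\ni\mathbb{R}$); (iii) so $p$ has at least three distinct zeros on the single $2$-sphere $\mathbb{S}_b$; (iv) invoke the per-sphere dichotomy for zeros of one–variable regular polynomials — on any sphere $x+y\mathbb{S}$, a nonzero regular polynomial has at most one zero unless it vanishes on the whole sphere (this follows from Theorem \cite[Theorem 3.13]{libroGSS} together with the factorization of the companion polynomial of that sphere, $q_2^2-2xq_2+(x^2+y^2)$, which is slice-preserving); (v) conclude $p\equiv 0$ on $\mathbb{S}_b$, i.e.\ $P$ vanishes on $\{a\}\times\mathbb{S}_b$. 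The main obstacle is step (iv): one must cite or re-derive the sharp statement ``a one–variable regular polynomial vanishing at two distinct points of a $2$-sphere vanishes on the entire sphere.'' This is standard (it is the reason the companion polynomial $q^2-2\mathrm{Re}(b)q+|b|^2$ divides $p$ on the left as soon as $p$ has two zeros on $\mathbb{S}_b$), and I would present it as a short lemma or a direct citation to \cite{libroGSS}, after which the rest is immediate.
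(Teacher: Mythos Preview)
Your final argument (steps (i)--(v)) has a genuine gap at the very first move. The function $p(q_2):=P(a,q_2)$ is \emph{not} a slice regular polynomial in $q_2$. Writing $P(q_1,q_2)=\sum_{n,m}q_1^nq_2^m a_{n,m}$, you get
\[
p(q_2)=\sum_{n,m}a^{\,n}q_2^{\,m}a_{n,m},
\]
and the powers $a^n$ sit to the \emph{left} of $q_2^m$; since $a\notin\mathbb{R}$ and left multiplication by a nonreal quaternion does not preserve slice regularity, $p$ is not of the form $\sum_m q_2^m c_m$. (This is exactly the asymmetry the paper exploits elsewhere: fixing the \emph{last} variable gives a regular polynomial in the remaining ones, cf.\ Proposition~\ref{timesU}, but fixing the \emph{first} variable does not.) Consequently you cannot invoke the one-variable zero structure theorem, and in fact the per-sphere dichotomy you rely on in step~(iv) is false for $p$: take $P(q_1,q_2)=(q_1-a)*q_2=q_1q_2-q_2a$, so that $p(q_2)=P(a,q_2)=aq_2-q_2a$. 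On any sphere $\mathbb{S}_b$ with $b\notin C_a$, this $p$ vanishes exactly at the two points $\mathbb{S}_b\cap\mathbb{C}_{J_a}$ and nowhere else---two zeros on the sphere, not one and not the whole sphere.

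The paper's proof does not try to make $p$ regular. Instead it uses Proposition~\ref{mlineare} to factor $P=(q_1-a)*Q$, then computes $P(a,q_2)$ explicitly and shows it has the form
\[
P(a,q_2)=a\,T\cdot\Big(\sum_{n,m}\overline{a^{\,n-1}}\,\beta_m\,a_{n,m}\Big),
\]
where $q_2=x+Ky$, $q_2^m=\alpha_m+K\beta_m$, and $T=aK-Ka$. The bracketed factor depends only on $x=\mathrm{Re}(q_2)$ and $y=|\mathrm{Im}(q_2)|$, not on the imaginary unit $K$; all the $K$-dependence is carried by the prefactor $aT$. So if $P(a,b)=0$ with $b\notin C_a$ (hence $T\neq 0$), the bracketed factor vanishes, and since it is constant along $\mathbb{S}_b$, one gets $P(a,q_2)=0$ for every $q_2\in\mathbb{S}_b$. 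The structural point you missed is precisely this splitting of $P(a,\cdot)$ into a $K$-dependent scalar times a spherically invariant factor; no one-variable regularity is needed or available.
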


\begin{proof}
 Thanks to Proposition \ref{mlineare},

  \[P(q_1,q_2)=(q_1-a)*Q(q_1,q_2)\]
  with $Q(q_1,q_2)=\sum\limits_{ \substack{n=0,\ldots,N  \\ m=0,\ldots,M} } q_1^nq_2^m a_{n m}$.
 Notice that 

  \[(q_1-a)*Q(q_1,q_2)=q_1 Q(q_1,q_2)-aQ(a^{-1}q_1a,a^{-1}q_2a);\]
hence

\begin{eqnarray*}P(a,q_2)  &=&
  a\sum\limits_{ \substack{n=0,\ldots,N  \\ m=0,\ldots,M} } a^nq_2^m a_{n m}-a\sum\limits_{ \substack{n=0,\ldots,N  \\ m=0,\ldots,M} } a^{n-1}q_2^ma a_{n m}\\
  &=& a\sum\limits_{ \substack{n=0,\ldots,N  \\ m=0,\ldots,M} } a^{n-1}[aq_2^m -q_2^ma] a_{n m}.
  \end{eqnarray*}
If $q_2=x+Ky$, put $q_2^m:=\alpha_m+K\beta_m$, with $x,\ y,\ \alpha_m, \beta_m$ real numbers and $K\in\mathbb{S}$.
Similarly, write $a^n:=u_n+Jv_n$ if $a=a_o+Ja_1$ with $a_0,\ a_1,\ u_n,\ v_n$ real numbers and $J\in\mathbb{S}$.
Hence
\begin{eqnarray*}P(a,q_2)  &=&
  a\sum\limits_{ \substack{n=0,\ldots,N  \\ m=0,\ldots,M} } a^{n-1}\underbrace{[aK-Ka]}_{:=T} \beta_ma_{n m}\\
  &=&  a\sum\limits_{ \substack{n=0,\ldots,N  \\ m=0,\ldots,M} } (u_{n-1}+Jv_{n-1}) T \beta_ma_{n m}.
  \end{eqnarray*}
Now
\[T=aK-Ka=(a_o+Ja_1)K-K(a_o+Ja_1)=(JK-KJ)a_1=2J\wedge K a_1;\]
furthermore, since $J\perp T$, then $JT=-TJ$.
Therefore 
\begin{eqnarray}\label{form1}
  P(a,q_2)  &=& a T\sum\limits_{ \substack{n=0,\ldots,N  \\ m=0,\ldots,M} } (u_{n-1}-Jv_{n-1}) \beta_ma_{n m} \nonumber \\
 &=& a T\sum\limits_{ \substack{n=0,\ldots,N  \\ m=0,\ldots,M} } \overline{a^{n-1}} \beta_m a_{n m}.
\end{eqnarray}
 If $P$ vanishes at $(a,b)$ with $b\notin C_a$, then $T\neq 0$ and hence 
$P$ vanishes on $\{a\}\times\mathbb{S}_b$. Indeed (\ref{form1}) is
a function of $\beta_m$ which only depends on $x={\rm Re}(q_2)$ and $y=|{\rm Im}(q_2)|$ and not on the imaginary unit of $q_2$.
\end{proof}

As expected from the Identity Principle, a  regular polynomial in two variables which  vanishes on $\{a\}\times \mathbb{H}$, with $a$ not real, is in fact a regular polynomial in one variable.  
\begin{proposition}
Let   $a\in\mathbb{H}\setminus\mathbb{R}$;
a regular polynomial $P\in\mathbb{H}[q_1,q_2]$
vanishes on $\{a\}\times\mathbb{H}$  if and only if there exists a regular polynomial $ Q\in\mathbb{H}[q_1]$
such that $P(q_1,q_2)=(q_1-a)*Q(q_1)$.

\end{proposition}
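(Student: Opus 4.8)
The plan is to prove both implications. The ``if'' direction is immediate: if $P(q_1,q_2)=(q_1-a)*Q(q_1)$ with $Q\in\mathbb{H}[q_1]$, then for any fixed $q_1=a$ the factor $q_1-a$ vanishes, and since the $*$-product satisfies $P*Q(q)=0$ whenever the left factor vanishes (recalled in the excerpt after the definition of the $*$-product, and here $q_1-a$ depends only on $q_1$), we get $P(a,q_2)=0$ for all $q_2\in\mathbb{H}$; one can also see this directly from the computation $(q_1-a)*Q(q_1)=q_1Q(q_1)-aQ(a^{-1}q_1a)$ which at $q_1=a$ gives $aQ(a)-aQ(a)=0$. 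So $P$ vanishes on $\{a\}\times\mathbb{H}$.

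For the converse, suppose $P\in\mathbb{H}[q_1,q_2]$ vanishes on $\{a\}\times\mathbb{H}$. In particular $P$ vanishes on $\{a\}\times C_a=\{a\}\times\mathbb{C}_{J_a}$, so by Proposition \ref{mlineare} (with $n=2$, $m=1$) there exists $Q\in\mathbb{H}[q_1,q_2]$ with
\[
P(q_1,q_2)=(q_1-a)*Q(q_1,q_2).
\]
Write $Q(q_1,q_2)=\sum_m Q_m(q_1)q_2^m$ grouping by powers of $q_2$, where each $Q_m\in\mathbb{H}[q_1]$. The goal is to show $Q_m\equiv 0$ for all $m\ge 1$, so that $Q(q_1,q_2)=Q_0(q_1)$ depends only on $q_1$; then $P(q_1,q_2)=(q_1-a)*Q_0(q_1)$ and we are done with $Q=Q_0$.

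To force $Q_m\equiv 0$ for $m\ge 1$, I would reuse the computation already carried out in the previous proposition in the excerpt. Since $P(a,q_2)=0$ identically in $q_2$, and writing $q_2=x+Ky$, formula \eqref{form1}-type reasoning shows
\[
P(a,q_2)=aT\!\!\sum_{\substack{n,m}}\overline{a^{n-1}}\,\beta_m\,a_{nm},
\]
where $T=aK-Ka\neq 0$ for $K\notin\mathbb{C}_{J_a}$, $a_{nm}$ are the coefficients of $Q$, and $\beta_m=\beta_m(x,y,K)$ is the ``imaginary part coefficient'' of $q_2^m$. Choosing $K\notin\mathbb{C}_{J_a}$ so that $T\neq 0$, and using that $a$ is invertible, the identity $P(a,q_2)=0$ forces $\sum_n\overline{a^{n-1}}\,a_{nm}=0$ after separating according to the real functions $\beta_m$; more cleanly, since this must hold for all $q_2$, it forces the one-variable regular polynomial $\sum_n q_1^{\,n-1}a_{nm}$ — which is $a^{-1}$ times the ``evaluation at $q_1=a$ from the left'' of $Q_m$ — to have $a$ as a zero for each $m$. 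The cleanest route, avoiding the coefficient bookkeeping, is: for each fixed $q_2=t\in\mathbb{R}$ we may regard $P(q_1,t)\in\mathbb{H}[q_1]$, it vanishes at $q_1=a$, hence by the one-variable theory $(q_1-a)$ left-$*$-divides it; combined with the Identity Principle \ref{Id} applied on $\mathbb{C}_{J_a}\times\mathbb{C}_J$ one propagates the conclusion.

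\medskip

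Actually the slickest argument, and the one I would write up, is the following. Apply Proposition \ref{mlineare} to get $P=(q_1-a)*Q$ with $Q\in\mathbb{H}[q_1,q_2]$. Perform the $*$-division of $Q$ by the monic (in $q_2$) polynomial $q_2-0=q_2$ repeatedly, or rather: it suffices to show $Q$ does not depend on $q_2$, equivalently $\partial Q/\partial q_2\equiv 0$ is not available, so instead write $Q=\sum_{m=0}^{M}q_2^m*Q_m$ is not the natural grouping — the natural one is $Q(q_1,q_2)=\sum_{m=0}^M Q_m(q_1)\,q_2^m$. From $(q_1-a)*Q(q_1,q_2)=q_1Q(q_1,q_2)-aQ(a^{-1}q_1a,\,a^{-1}q_2a)$ evaluated at $q_1=a$ we obtain, as in the previous proof, that $P(a,q_2)$ equals $a$ times a $\mathbb{H}$-linear-in-the-$\beta_m(q_2)$ expression with coefficient $T\sum_n\overline{a^{n}}\cdot(\text{coeff})$; since this vanishes for every $q_2$ and the functions $1,\beta_1,\beta_2,\dots$ (as functions on the slice $\mathbb{C}_J$, $J\neq\pm J_a$) are $\mathbb{R}$-linearly independent after also varying $x,y$, each coefficient vanishes, which says precisely that each $Q_m$ ($m\ge1$) is left-$*$-divisible by $q_1-a$ \emph{and} ... hmm, this still only kills a piece.

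Let me state the plan more honestly: \textbf{the main obstacle} is organizing the vanishing condition $P(a,q_2)\equiv 0$ so that it kills \emph{all} $q_2$-dependence of $Q$, not just its value at a single point. The clean fix is an induction on $\deg_{q_2}P$. If $\deg_{q_2}P=0$ this is exactly the one-variable statement (Proposition \ref{mlineare} with $n=1$). If $\deg_{q_2}P=M\ge1$, write $P(q_1,q_2)=\tilde P(q_1,q_2)+R(q_1)q_2^M$ where $R(q_1)\in\mathbb{H}[q_1]$ is obtained by collecting the top $q_2$-degree coefficients and $\deg_{q_2}\tilde P<M$; since $P(a,q_2)\equiv0$, comparing top $q_2$-degree terms (here one uses that the $*$-product in the two-variable setting multiplies $q_1$ and $q_2$ commutatively, so no cross terms in $q_2^M$ appear from lower-degree-in-$q_2$ pieces) gives $R(a)=0$ \emph{and} $\tilde P(a,q_2)\equiv 0$. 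By the one-variable theory $R(q_1)=(q_1-a)*R_1(q_1)$, so $R(q_1)q_2^M=(q_1-a)*(R_1(q_1)q_2^M)$ vanishes on $\{a\}\times\mathbb{H}$; hence $\tilde P$ also vanishes on $\{a\}\times\mathbb{H}$, and by induction $\tilde P(q_1,q_2)=(q_1-a)*\tilde Q(q_1)$ with $\tilde Q\in\mathbb{H}[q_1]$. Then
\[
P(q_1,q_2)=(q_1-a)*\tilde Q(q_1)+(q_1-a)*\bigl(R_1(q_1)q_2^M\bigr)=(q_1-a)*\bigl(\tilde Q(q_1)+R_1(q_1)q_2^M\bigr),
\]
which is \emph{not} yet of the required form since it still has a $q_2^M$. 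So the induction must instead be set up to peel off one $q_2$-power at a time and re-absorb: precisely, after this step $P-(q_1-a)*(R_1(q_1)q_2^M)=\tilde P$ has lower $q_2$-degree and still vanishes on $\{a\}\times\mathbb H$, and one iterates; at the end one collects $P=(q_1-a)*\sum_m R_{m}(q_1)q_2^{?}$...

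Hm, this reveals the statement is actually \emph{asserting} $Q\in\mathbb H[q_1]$, so the above shows $P=(q_1-a)*Q$ with $Q=\sum_m R_m(q_1)q_2^{m}$ possibly genuinely depending on $q_2$ — unless the Identity Principle forces more. The resolution: $P$ vanishes on \emph{all} of $\{a\}\times\mathbb H$, in particular on $\{a\}\times\mathbb{C}_J$ for a single $J\neq\pm J_a$; by Proposition \ref{mlineare} applied on the two-variable domain, and then the Identity Principle \ref{Id}, the remainder in the $q_2$-division (by $q_2-t$ for varying real $t$) must vanish identically, which does pin $Q$ down to depend only on $q_1$.

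\medskip

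\textbf{Summary of the plan I would write.} (i) ``If'' direction: one line via the vanishing of $*$-products with vanishing left factor. (ii) ``Only if'': first invoke Proposition \ref{mlineare} to get $P=(q_1-a)*Q$, $Q\in\mathbb{H}[q_1,q_2]$; then show $Q$ is independent of $q_2$ by observing that $P$ vanishes on $\{a\}\times\mathbb{H}$ together with the explicit formula $P(a,q_2)=aT\sum_{n,m}\overline{a^{n-1}}\beta_m(q_2)a_{nm}$ from the proof of the previous proposition, where $T=aK-Ka\neq0$ for $K\notin\mathbb{C}_{J_a}$; since $a$ and $T$ are invertible and $q_2\mapsto(\beta_0(q_2),\beta_1(q_2),\dots)$ takes $\mathbb{R}$-affinely-independent values as $q_2$ ranges over a slice $\mathbb{C}_J$ with $J\neq\pm J_a$, the identity $\sum_{n,m}\overline{a^{n-1}}\beta_m a_{nm}=0$ forces $\sum_n\overline{a^{n-1}}a_{nm}=0$ for every $m$; for $m=0$ this is automatic (it is the content of $P$ vanishing on $\{a\}\times C_a$), and the point is that this says each ``partial'' $\sum_n q_1^{n}a_{nm}$ is left-divisible by $q_1-a$... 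I will instead carry out the honest induction on $\deg_{q_2}P$ described above: peel off the top $q_2$-degree coefficient $R(q_1)$, show $R(a)=0$ by the Identity Principle, factor $R=(q_1-a)*R_1$ from the one-variable theory, subtract off $(q_1-a)*R_1(q_1)q_2^{\deg_{q_2}}$... and conclude by the Identity Principle \ref{Id} that all $q_2$-dependence cancels. \textbf{The main obstacle} is exactly this bookkeeping: ensuring that ``peeling off $q_2$-powers'' leaves the remaining polynomial still vanishing on all of $\{a\}\times\mathbb H$ (not merely at finitely many points), for which the Identity Principle in two variables (Theorem \ref{Id}) is the essential tool.
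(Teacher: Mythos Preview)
Your central plan for the ``only if'' direction is to invoke Proposition~\ref{mlineare} to factor $P=(q_1-a)*Q$ with $Q\in\mathbb{H}[q_1,q_2]$, and then to argue that this $Q$ must actually lie in $\mathbb{H}[q_1]$. That last step cannot be completed, because it is simply false. Take any $a\in\mathbb{H}\setminus\mathbb{R}$ and set
\[
P(q_1,q_2)=(q_1^2-2{\rm Re}(a)\,q_1+|a|^2)\,q_2\in\mathbb{H}[q_1,q_2].
\]
Since $a^2-2{\rm Re}(a)a+|a|^2=0$, we have $P(a,q_2)=0$ for every $q_2\in\mathbb{H}$, so $P$ vanishes on $\{a\}\times\mathbb{H}$. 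The (unique) Euclidean quotient of $P$ by $q_1-a$ is $Q(q_1,q_2)=(q_1-\bar a)*q_2=q_1q_2-q_2\bar a$, which genuinely depends on $q_2$. Hence the various routes you sketch (the $\beta_m$-expansion from the previous proposition, or the induction on $\deg_{q_2}$) are attempting to establish something that does not hold; your induction, for instance, at the end only produces $P=(q_1-a)*\bigl(\sum_m R_m(q_1)*q_2^{m}\bigr)$, and there is no mechanism to make the $q_2$-dependence disappear.

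More to the point, this same $P$ is a counterexample to the proposition itself: it vanishes on $\{a\}\times\mathbb{H}$, yet $P\notin\mathbb{H}[q_1]$, so no factorization $P=(q_1-a)*Q(q_1)$ with $Q\in\mathbb{H}[q_1]$ exists. The paper's own argument breaks at the inference ``since $P(q_1,q_2)-q_1P_1(q_2)-\cdots-q_1^NP_N(q_2)$ is a regular polynomial, it follows that $-a[P_1(q_2)+aP_2(q_2)+\cdots]$ is constant.'' In the example above (with $N=2$) one has $P_1(q_2)+aP_2(q_2)=-2{\rm Re}(a)q_2+aq_2=-\bar a\,q_2$, and $-a\cdot(-\bar a\,q_2)=|a|^2q_2$, which is a non-constant regular polynomial. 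The implicit principle ``left multiplication by a non-real $a$ cannot turn a non-constant function into a regular polynomial'' fails exactly when higher-order terms contribute an $\bar a$ that cancels the $a$. So neither your approach nor the paper's can be repaired without changing the statement.
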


{
\begin{proof}
  {Notice that $P$, as a regular polynomial in $\mathbb{H}[q_1,q_2]$, can be written as }
  \begin{equation*}
  P(q_1,q_2)=\sum_{n=0}^{N} q_1^nP_n(q_2)
    \end{equation*}
with $P_n$ regular polynomials in the variable $q_2$.
Since  $P$
vanishes on $\{a\}\times\mathbb{H}$, it turns out that
\begin{equation}\label{f1}
  0\equiv P(a,q_2)=\sum_{n=0}^{N} a^nP_n(q_2)
  \;\ \mathrm{for\ any}\ q_2\in\mathbb{H}.
\end{equation}
If $\deg_{q_1}(P)=1$, then (\ref{f1}) becomes $P_0(q_2)=-aP_1(q_2)\  \mathrm{for\ any}\ q_2\in\mathbb{H}$ , so that
\[P(q_1,q_2)=-aP_1(q_2)+q_1P_1(q_2)\  \mathrm{for\ any}\ q_2\in\mathbb{H}\] 
and since $P(q_1,q_2)-q_1P_1(q_2)\in\mathbb{H}[q_1,q_2]$ is a regular polynomial, it follows that $P_1$ and  hence  $P_0$ are constant.
Furthermore $P(q_1,q_2)=(q_1-a)\cdot C=(q_1-a)*C$ with $C\in\mathbb{H}$.
If $\deg_{q_1}(P)=N>1$, then, as before,
$P_0(q_2)=-aP_1(q_2)-a^2P_2(q_2)-\ldots- a^{N}P_{N}(q_2)$  $\mathrm{for\ any}\ q_2\in\mathbb{H}$, so that
\[P(q_1,q_2)=
-aP_1(q_2)-a^2P_2(q_2)-\ldots- a^{N}P_{N}(q_2)+
q_1P_1(q_2)+q_1^2P_2(q_2)+\ldots +q_1^{N}P_{N}(q_2).\]
Since $P(q_1,q_2)-q_1P_1(q_2)-q_1^2P_2(q_2)-\ldots -q_1^NP_N(q_2)\in\mathbb{H}[q_1,q_2]$ is a regular polynomial, it follows that
\[-a[P_1(q_2)-aP_2(q_2)-\ldots- a^{N-1}P_{N}(q_2)]\]
is constant, or $P_1(q_2)-aP_2(q_2)-\ldots- a^{N-1}P_{N}(q_2)\equiv C_1 \; { \rm for\ any }\ q_2\in\mathbb{H},$ hence
\[C_1-P_1(q_2)\equiv -a[P_2(q_2)+\ldots+ a^{N-2}P_{N}(q_2)] \;\ \mathrm{for\ any  }\ q_2\in\mathbb{H}.\]
Since $C_1-P_1(q_2)$ is a regular polynomial,
 it follows that
\[P_2(q_2)+aP_3(q_2)-\ldots- a^{N-2}P_{N}(q_2)\]
is constant and so $P_1$ is constant as well.
After iterating the same procedure, we eventually get that $P_2$, $P_3, \ldots ,\ P_N$ are constant  regular polynomials. 
Therefore $P(q_1,q_2)\in\mathbb{H}[q_1]$ is a regular polynomial in the variable $q_1$ which vanishes at $q_1=a$, so, recalling \cite[Proposition 3.18]{libroGSS}, that
there exists $Q\in\mathbb{H}[q_1]$ such that
\[P(q_1,q_2)=(q_1-a)* Q(q_1).\]
\end{proof}

Let us now consider zeros of a regular polynomial with assigned second
component

\begin{proposition}\label{timesU}
	Let $P\in \mathbb{H}[q_1,q_2]$. If there exist $J\in \mathbb
        S$, a subset $U\subset \mathbb{C}_J$ with an accumulation
        point in $\mathbb{C}_J$ and $b\in \mathbb{H}$ such that $P$
        vanishes on $U\times\{b\}$, then $P$ vanishes on
        $\mathbb{H}\times\{b\}$.
\end{proposition}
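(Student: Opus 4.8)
The plan is to freeze the second variable and reduce everything to a one–variable statement. Write $P$ as $P(q_1,q_2)=\sum_{n=0}^{N}q_1^{n}P_n(q_2)$, where the $P_n$ are regular polynomials in $q_2$, and set $g(q_1):=P(q_1,b)=\sum_{n=0}^{N}q_1^{n}c_n$ with $c_n:=P_n(b)\in\mathbb{H}$. Then $g$ is a slice regular polynomial in the single variable $q_1$, and the hypothesis says exactly that $g$ vanishes on the set $U\subset\mathbb{C}_J$, which has an accumulation point in $\mathbb{C}_J$. The conclusion we want, $P$ vanishing on $\mathbb{H}\times\{b\}$, is precisely the assertion that $g\equiv 0$ on $\mathbb{H}$, so no two–variable Identity Principle is needed beyond this reduction.

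The core step is therefore to show that a one–variable slice regular polynomial vanishing on a subset of a single slice having an accumulation point there must vanish identically. I would do this by invoking the structure of zero sets recalled in \cite[Theorem 3.13]{libroGSS}: if $g$ did not vanish identically, its zero set would consist of isolated points and isolated $2$–spheres $x+y\mathbb{S}$, and since each such sphere meets the slice $\mathbb{C}_J$ in the two points $x\pm yJ$, the set $Z(g)\cap\mathbb{C}_J$ would be discrete in $\mathbb{C}_J$. This contradicts the existence of an accumulation point of $U\subseteq Z(g)\cap\mathbb{C}_J$ in $\mathbb{C}_J$, forcing $g\equiv 0$. Equivalently, and avoiding that theorem, one may pick $L\in\mathbb{S}$ with $L\perp J$, split $c_n=\alpha_n+\beta_n L$ with $\alpha_n,\beta_n\in\mathbb{C}_J$, and observe that for $q_1\in\mathbb{C}_J$ one has $g(q_1)=f(q_1)+h(q_1)L$ with $f(q_1)=\sum_n q_1^n\alpha_n$ and $h(q_1)=\sum_n q_1^n\beta_n$ genuine holomorphic polynomials $\mathbb{C}_J\to\mathbb{C}_J$; from the directness of $\mathbb{H}=\mathbb{C}_J\oplus\mathbb{C}_J L$ we get $f|_U=h|_U=0$, and the classical identity theorem for holomorphic functions of one complex variable gives $f\equiv h\equiv 0$ on $\mathbb{C}_J$, hence $\alpha_n=\beta_n=0$, hence $c_n=0$ for all $n$, hence $g\equiv 0$.

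Either way one concludes $P(q_1,b)=g(q_1)=0$ for all $q_1\in\mathbb{H}$, which is the claim. I do not expect a genuine obstacle here: the only point demanding care is the passage from ``vanishing on a subset of one slice with an accumulation point'' to ``identically zero'', and this is settled by either of the two arguments above.
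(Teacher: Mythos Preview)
Your argument is correct and follows essentially the same route as the paper: freeze $q_2=b$ to get a one-variable slice regular polynomial $g(q_1)=P(q_1,b)$ and then apply the one-variable Identity Principle. The only difference is cosmetic---the paper simply cites \cite[Theorem 1.12]{libroGSS} for the Identity Principle, whereas you additionally sketch two proofs of it (via the zero-set structure theorem and via the splitting lemma).
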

\begin{proof}
  Let $P(q_1,q_2)=\sum\limits_{ \substack{n=0,\ldots,N
      \\ m=0,\ldots,M} }{q_1}^n{q_2}^ma_{n,m}$.  Then
  \[Q(q_1):=P(q_1,b)=\sum_{ \substack{n=0,\ldots,N  \\ m=0,\ldots,M} }{q_1}^n{b}^ma_{n,m}\]
  is a regular polynomial in one quaternionic variable vanishing on a
  subset $U\subset \mathbb{C}_J$ with an accumulation point in
  $\mathbb{C}_J$. Thanks to the Identity Principle for slice regular
  functions in one quaternionic variable \cite[Theorem 1.12]{libroGSS}, $Q$ is identically zero on
  $\mathbb{H}$, and hence $P$ vanishes on $\mathbb{H}\times\{b\}$.
\end{proof}
Similarly, using the Strong Identity Principle for slice regular
functions in one quaternionic  variable \cite[Corollary 3.14]{libroGSS}, we can also prove the following stronger statement.
\begin{proposition}
  Let $P\in \mathbb{H}[q_1,q_2]$.  If there exist a two dimensional
  sphere $S=x+y\mathbb S\subset \mathbb{H}$ and $b\in \mathbb H$ such
  that $P$ vanishes on $U \times \{b\}$ where $U\subset \mathbb{H}$ is
  such that $U\cap S=\varnothing$ but $U$ has an accumulation point on
  $S$,
then $P$ vanishes on $\mathbb{H}\times\{b\}$.
\end{proposition}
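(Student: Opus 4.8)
The statement is the exact analogue of Proposition \ref{timesU}, with the ordinary Identity Principle in one variable replaced by the Strong Identity Principle (\cite[Corollary 3.14]{libroGSS}). So the plan is to mimic the proof of Proposition \ref{timesU} verbatim, reducing the two-variable statement to a one-variable one by freezing the second variable. First I would write $P(q_1,q_2)=\sum_{\substack{n=0,\ldots,N\\m=0,\ldots,M}}q_1^nq_2^ma_{n,m}$ and set
\[
Q(q_1):=P(q_1,b)=\sum_{\substack{n=0,\ldots,N\\m=0,\ldots,M}}q_1^nb^ma_{n,m},
\]
which is a slice regular polynomial in the single quaternionic variable $q_1$. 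The hypothesis says precisely that $Q$ vanishes on the set $U\subset\mathbb{H}$, where $U\cap S=\varnothing$ but $U$ has an accumulation point on the $2$-sphere $S=x+y\mathbb{S}$.

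Next I would invoke the Strong Identity Principle for slice regular functions in one quaternionic variable: a slice regular function whose zero set has an accumulation point on a sphere $S=x+y\mathbb{S}$, with the accumulating zeros lying outside $S$, must vanish identically. Applying this to $Q$ gives $Q\equiv 0$ on $\mathbb{H}$, i.e. $P(q_1,b)=0$ for every $q_1\in\mathbb{H}$, which is exactly the assertion that $P$ vanishes on $\mathbb{H}\times\{b\}$. That completes the argument.

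The only point that requires a word of care — and the one I would regard as the ``main obstacle,'' though it is minor — is checking that the hypotheses of \cite[Corollary 3.14]{libroGSS} are met by $Q$ rather than by $P$ itself: the accumulation set $U$ is prescribed for $P$ restricted to $q_2=b$, so one must make sure that restricting to the slice $q_2=b$ does not destroy the accumulation structure. It does not, because the set $U\times\{b\}\subseteq\mathbb{H}^2$ on which $P$ vanishes corresponds, under the identification $q_1\mapsto(q_1,b)$, to the set $U\subseteq\mathbb{H}$ on which $Q$ vanishes, and this identification is a homeomorphism onto its image, so accumulation points on $S$ are preserved. Everything else is a direct transcription of the proof of Proposition \ref{timesU}, now with the stronger one-variable input.
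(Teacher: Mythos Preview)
Your proposal is correct and is exactly the approach the paper indicates: the authors do not spell out a proof, but state that the result follows ``similarly'' to Proposition \ref{timesU} by replacing the ordinary Identity Principle with the Strong Identity Principle \cite[Corollary 3.14]{libroGSS}, which is precisely the reduction $Q(q_1):=P(q_1,b)$ you carry out.
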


\noindent

%

We want to describe ideals contained in the set  $E_{(a,b)}$ of  regular polynomials vanishing at a point $(a,b) \in \mathbb{H}^2$.
First we need the following
\begin{proposition}\label{Sa}
	A regular polynomial in two variables $P\in
	\mathbb{H}[q_1,q_2]$ vanishes on $\mathbb{S}_a\times \{b\}$ if and only
	if there exist $P_1\in \mathbb{H}[q_1]$ and $P_2\in \mathbb{H}[q_1,q_2]$ such that
	\begin{align*}
	P(q_1,q_2)&=(q_1^2-2{\rm Re}(a)q_1+|a|^2)*P_1(q_1)+(q_2-b)*P_2(q_1,q_2) \\	
\end{align*}	
	\end{proposition}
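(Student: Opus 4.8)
The plan is to peel off the $q_2$-dependence by the $*$-division of Proposition~\ref{euclidean}, and then to reduce to the one-variable theory of zeros applied to the resulting polynomial in $q_1$ alone. The organising observation is that $q_1^2-2\mathrm{Re}(a)q_1+|a|^2$ is exactly the symmetrization $(q_1-a)^s=(q_1-a)*(q_1-\overline{a})$ of the linear factor $q_1-a$: it has real coefficients, so that $(q_1^2-2\mathrm{Re}(a)q_1+|a|^2)*F=(q_1^2-2\mathrm{Re}(a)q_1+|a|^2)\cdot F$ as regular functions for every $F$, and, since $a\notin\mathbb{R}$, its zero set is precisely $\mathbb{S}_a$. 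For the ``if'' implication, suppose $P$ has the stated form and take $(u,b)$ with $u\in\mathbb{S}_a$: the first summand evaluated there equals $(u^2-2\mathrm{Re}(a)u+|a|^2)\cdot P_1(u)=0$ since $u\in\mathbb{S}_a$ annihilates the slice preserving quadratic factor, while the second summand vanishes on the whole slice $\mathbb{H}\times\{b\}$ by Proposition~\ref{mlineare} applied with $n=m=2$; hence $P(u,b)=0$ for every $u\in\mathbb{S}_a$.

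For the ``only if'' implication, assume $P$ vanishes on $\mathbb{S}_a\times\{b\}$. Since $q_2-b$ is monic of degree $1$ in $q_2$, Proposition~\ref{euclidean} gives a decomposition $P(q_1,q_2)=(q_2-b)*P_2(q_1,q_2)+R(q_1)$ with $P_2\in\mathbb{H}[q_1,q_2]$ and $R\in\mathbb{H}[q_1]$ (the remainder has degree $<1$ in $q_2$, hence does not involve $q_2$). Evaluating at $(u,b)$ with $u\in\mathbb{S}_a$, and using once more that $(q_2-b)*P_2$ vanishes on $\mathbb{H}\times\{b\}$ by Proposition~\ref{mlineare}, we obtain $R(u)=P(u,b)=0$ for all $u\in\mathbb{S}_a$. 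Thus the one-variable regular polynomial $R$ vanishes identically on the $2$-sphere $\mathbb{S}_a$, and the task is reduced to showing that such an $R$ is left $*$-divisible by $(q_1-a)^s=q_1^2-2\mathrm{Re}(a)q_1+|a|^2$.

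This last step is the heart of the argument and the point where the one-variable theory must really be used. If $R\equiv 0$ one takes $P_1\equiv 0$; otherwise, since $a\in\mathbb{S}_a$ we may write $R=(q_1-a)*\widetilde{R}$ by \cite[Proposition 3.18]{libroGSS}, and the relation between $*$-product and pointwise product recalled in Section~\ref{sec1} (see \cite[Theorem 3.4]{libroGSS}) shows that $\widetilde{R}$ vanishes at every point $(q_1-a)^{-1}q_1(q_1-a)\in\mathbb{S}_a$ with $q_1\in\mathbb{S}_a\setminus\{a\}$, hence at infinitely many points of $\mathbb{S}_a$; by the structure of zero sets \cite[Theorem 3.13]{libroGSS} this forces $\widetilde{R}$ to vanish on all of $\mathbb{S}_a$, in particular $\widetilde{R}(\overline{a})=0$, so $\widetilde{R}=(q_1-\overline{a})*P_1$ and $R=(q_1-a)*(q_1-\overline{a})*P_1=(q_1^2-2\mathrm{Re}(a)q_1+|a|^2)*P_1$. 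Substituting this into the decomposition of $P$ yields the claimed identity. The main obstacle is precisely this passage from a spherical zero of $R$ to divisibility by the symmetrized linear factor; once that is in place, everything else is a direct application of Propositions~\ref{euclidean} and \ref{mlineare} together with the slice preserving behaviour of $(q_1-a)^s$.
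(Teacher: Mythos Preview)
Your proof follows the same strategy as the paper's: $*$-divide $P$ by $q_2-b$ to obtain a one-variable remainder $R(q_1)$ vanishing on $\mathbb{S}_a$, and then appeal to the one-variable theory to factor out the symmetrized quadratic. The paper simply cites \cite[Proposition~3.18]{libroGSS} for this last step, whereas you try to spell it out.

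There is, however, an inaccuracy in your spelled-out argument. You claim that the points $(q_1-a)^{-1}q_1(q_1-a)$, for $q_1\in\mathbb{S}_a\setminus\{a\}$, are \emph{infinitely many} points of $\mathbb{S}_a$, and then invoke \cite[Theorem~3.13]{libroGSS} to force $\widetilde{R}$ to vanish on the whole sphere. In fact all these conjugates coincide with $\overline{a}$: for $q_1\in\mathbb{S}_a$ one has $q_1^2=2\,\mathrm{Re}(a)\,q_1-|a|^2$, hence
\[
q_1(q_1-a)=q_1^2-q_1a=q_1(2\,\mathrm{Re}(a)-a)-|a|^2=q_1\overline{a}-|a|^2=(q_1-a)\overline{a},
\]
so $(q_1-a)^{-1}q_1(q_1-a)=\overline{a}$ for every $q_1\in\mathbb{S}_a\setminus\{a\}$. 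Thus the appeal to the structure theorem on zero sets is unjustified (you only have one point, not infinitely many), but also unnecessary: that single point is precisely $\overline{a}$, which gives $\widetilde{R}(\overline{a})=0$ directly and hence $\widetilde{R}=(q_1-\overline{a})*P_1$ as you want. With this correction your argument is complete and matches the paper's.
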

\begin{proof}
Dividing $P$ by $(q_2-b)$ as in the proof of Proposition \ref{zeri},
we can write  \[P(q_1,q_2)=(q_2-b)*P_2(q_1,q_2)+R(q_1),\] 
and hence  
$R(q_1)=P(q_1,q_2)-(q_2-b)*P_2(q_1,q_2)$ vanishes on $\mathbb{S}_a\times \{b\}$. Thus, recalling the one-variable theory (see \cite[Proposition 3.18]{libroGSS}),
$$R(q_1)=(q_1^2-2{\rm Re}(a)q_1+|a|^2)*P_1(q_1).$$ 
\end{proof}
Let us denote by $E_{\mathbb{S}_a\times \{b\}}$ the set of regular polynomials vanishing on $\mathbb{S}_a\times \{b\}$, namely
\[E_{\mathbb{S}_a\times \{b\}}=\{(q_1^2-2{\rm Re}(a)q_1+|a|^2)*P_1(q_1)+(q_2-b)*P_2(q_1,q_2) \ : \ P_1\in \mathbb{H}[q_1], P_2 \in \mathbb{H}[q_1,q_2]\}.\]
\begin{proposition}\label{Said}
The set $E_{\mathbb{S}_a\times \{b\}}$ is an ideal contained in $E_{(a,b)}$.
\end{proposition}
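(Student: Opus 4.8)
The plan is to verify directly that $E_{\mathbb{S}_a\times\{b\}}$ is a right ideal and then observe that it is contained in $E_{(a,b)}$. Recall that, by Proposition \ref{Sa}, the set $E_{\mathbb{S}_a\times\{b\}}$ consists exactly of the regular polynomials vanishing on the set $\mathbb{S}_a\times\{b\}$. So the first step is to check closure under addition: if $P$ and $\widetilde P$ both vanish on $\mathbb{S}_a\times\{b\}$, then so does $P+\widetilde P$, which is immediate. The second step is closure under $*$-multiplication on the right by an arbitrary $Q\in\mathbb{H}[q_1,q_2]$. Here I would invoke Corollary \ref{zerisym} / Remark \ref{zeritrasmessi}: since every point $(c,b)$ with $c\in\mathbb{S}_a$ has commuting components (indeed $cb=bc$ when $b$ and $c$ share the same slice — but this is not automatic, so care is needed). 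In fact the cleanest route is \emph{not} to argue pointwise but to use the explicit description from Proposition \ref{Sa}: write $P=(q_1^2-2{\rm Re}(a)q_1+|a|^2)*P_1(q_1)+(q_2-b)*P_2(q_1,q_2)$ and form $P*Q$. By associativity of the $*$-product,
\[
P*Q=(q_1^2-2{\rm Re}(a)q_1+|a|^2)*\big(P_1*Q\big)+(q_2-b)*\big(P_2*Q\big),
\]
and since $q_1^2-2{\rm Re}(a)q_1+|a|^2$ has real coefficients it is slice preserving, so $(q_1^2-2{\rm Re}(a)q_1+|a|^2)*(P_1*Q)$ is again of the required form with $P_1*Q$ playing the role of the first factor (after possibly re-dividing $P_1*Q$ by $q_2-b$ to absorb its $q_2$-dependence, which only changes $P_2*Q$). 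Thus $P*Q$ lies in $E_{\mathbb{S}_a\times\{b\}}$ by the characterization of Proposition \ref{Sa}. The main technical point to handle carefully is that in the expression for $P$ the ``first'' summand was a polynomial purely in $q_1$, whereas after multiplying by $Q$ it need not be; one resolves this by performing the $*$-division of $(q_1^2-2{\rm Re}(a)q_1+|a|^2)*(P_1*Q)$ by $(q_2-b)$ as in Proposition \ref{euclidean} and collecting the remainder — which is a one-variable polynomial in $q_1$ vanishing on $\mathbb{S}_a$ — back into the first summand via the one-variable theory \cite[Proposition 3.18]{libroGSS}.

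For the inclusion $E_{\mathbb{S}_a\times\{b\}}\subseteq E_{(a,b)}$: every $P\in E_{\mathbb{S}_a\times\{b\}}$ vanishes on all of $\mathbb{S}_a\times\{b\}$, and since $a\in\mathbb{S}_a$, in particular $P(a,b)=0$, i.e.\ $P\in E_{(a,b)}$. (If $a\in\mathbb{R}$, then $\mathbb{S}_a=\{a\}$ and the statement is trivial; the quadratic factor degenerates to $(q_1-a)^2$ but the argument is unaffected.)

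I expect the only real obstacle to be the bookkeeping in the right-multiplication step: one must be sure that after forming $P*Q$ and redistributing terms, one genuinely lands back in the shape prescribed by Proposition \ref{Sa}, rather than merely producing a polynomial that vanishes on $\mathbb{S}_a\times\{b\}$ (which, thanks to Proposition \ref{Sa} itself, is actually the same thing — so invoking that proposition short-circuits the difficulty entirely). In other words, the conceptually honest proof is: $E_{\mathbb{S}_a\times\{b\}}$ equals the set of regular polynomials vanishing on $\mathbb{S}_a\times\{b\}$; this set is closed under addition trivially; it is closed under right $*$-multiplication because $\mathbb{S}_a\times\{b\}$ is $q_1$-symmetric and every point of it has a commuting $q_1$-component with itself — more precisely, if $P$ vanishes on $\mathbb{S}_a\times\{b\}$ then for each fixed $q_2=b$ the one-variable polynomial $q_1\mapsto P(q_1,b)$ vanishes on the sphere $\mathbb{S}_a$, hence so does $q_1\mapsto (P*Q)(q_1,b)$ by the one-variable factorization theory (a spherical zero of $P(\cdot,b)$ is transmitted to any right $*$-multiple), giving $(P*Q)(c,b)=0$ for all $c\in\mathbb{S}_a$. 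This last observation, combined with closure under addition, shows $E_{\mathbb{S}_a\times\{b\}}$ is a right ideal, and the containment in $E_{(a,b)}$ follows from $a\in\mathbb{S}_a$.
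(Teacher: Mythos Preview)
Your first, algebraic route is correct and matches the paper's. From $P=(q_1^2-2{\rm Re}(a)q_1+|a|^2)*P_1+(q_2-b)*P_2$ and associativity you get $P*Q=(q_1^2-2{\rm Re}(a)q_1+|a|^2)*(P_1*Q)+(q_2-b)*(P_2*Q)$. The paper simply observes that, since the quadratic has real coefficients, the first summand equals the \emph{pointwise} product $(q_1^2-2{\rm Re}(a)q_1+|a|^2)\cdot(P_1*Q)$ and hence vanishes on $\mathbb{S}_a\times\mathbb{H}$, while the second summand vanishes on $\mathbb{H}\times\{b\}$ by Proposition~\ref{mlineare}; thus $P*Q$ vanishes on $\mathbb{S}_a\times\{b\}$ and lies in $E_{\mathbb{S}_a\times\{b\}}$ by Proposition~\ref{Sa}. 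Your redivision-by-$(q_2-b)$ bookkeeping is correct but unnecessary once you make this pointwise-product observation. The inclusion $E_{\mathbb{S}_a\times\{b\}}\subseteq E_{(a,b)}$ via $a\in\mathbb{S}_a$ is exactly what the paper does.

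Your alternative ``conceptually honest'' argument, however, has a real gap. The step ``$q_1\mapsto(P*Q)(q_1,b)$ inherits the spherical zero of $q_1\mapsto P(q_1,b)$ by one-variable factorization theory'' tacitly assumes that $(P*Q)(\cdot,b)$ is a one-variable right $*$-multiple of $P(\cdot,b)$. It is not: evaluation at $q_2=b$ is \emph{not} a $*$-homomorphism from $\mathbb{H}[q_1,q_2]$ to $\mathbb{H}[q_1]$. Concretely, for $P=\sum q_1^nq_2^m a_{n,m}$ and $Q=\sum q_1^rq_2^s b_{r,s}$ one has
\[
(P*Q)(q_1,b)=\sum q_1^{n+r}b^{m+s}a_{n,m}b_{r,s},\qquad
P(q_1,b)*_{q_1}Q(q_1,b)=\sum q_1^{n+r}b^{m}a_{n,m}b^{s}b_{r,s},
\]
and these disagree unless each $a_{n,m}$ commutes with $b$. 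So the one-variable transmission-of-spherical-zeros argument does not apply. The conclusion $(P*Q)|_{\mathbb{S}_a\times\{b\}}=0$ \emph{is} true, but the only way to see it is the real-coefficient/pointwise-product observation in your first approach (and in the paper), not this shortcut.
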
	
\begin{proof}
	The fact that $(q_1^2-2{\rm Re}(a)q_1+|a|^2)$ is a regular polynomial only in the first variable, with real coefficients, vanishing identically on $\mathbb{S}_a$, guarantees that 
$$(q_1^2-2{\rm Re}(a)q_1+|a|^2)*Q=(q_1^2-2{\rm Re}(a)q_1+|a|^2)\cdot Q$$ vanishes on $\mathbb{S}_a\times \mathbb{H}$ for any $Q\in \mathbb H[q_1,q_2]$. 
Hence 
any regular polynomial of the form $$(q_1^2-2{\rm Re}(a)q_1+|a|^2)*Q_1+(q_2-b)*Q_2,$$ with $Q_1,Q_2 \in \mathbb H[q_1,q_2]$, vanishes on $\mathbb{S}_a\times \{b\}$. Therefore $E_{\mathbb{S}_a\times \{b\}}$ is an ideal. Since $a\in \mathbb{S}_a$, $E_{\mathbb{S}_a\times \{b\}}\subset E_{(a,b)}$.
\end{proof}
Let us now show the following
\begin{proposition}\label{idealisfere}
	Let $a,b\in \mathbb H$ be such that $ab\neq ba$ and let
 $I$ be an ideal of $\mathbb{H}[q_1,q_2]$ contained in $E_{(a,b)}$. Then  $I\subseteq E_{\mathbb{S}_a\times \{b\}}$.
\end{proposition}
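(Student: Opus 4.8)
The plan is to take an arbitrary $P \in I$ and show it lies in $E_{\mathbb{S}_a\times\{b\}}$, i.e. that it admits the factorization described in Proposition \ref{Sa}. Since $P$ vanishes at $(a,b)$, Proposition \ref{zeri} (in two variables) gives $P(q_1,q_2) = (q_1-a)*A(q_1) + (q_2-b)*B(q_1,q_2)$ for some $A \in \mathbb{H}[q_1]$, $B \in \mathbb{H}[q_1,q_2]$. By Proposition \ref{Sa} it suffices to prove that the one-variable part — which, after $*$-dividing $P$ by $(q_2-b)$ as in that proposition, is a well-defined regular polynomial $R(q_1) \in \mathbb{H}[q_1]$ vanishing at $q_1=a$ — actually vanishes on the whole sphere $\mathbb{S}_a$; equivalently, that $R$ is left-divisible by the real polynomial $q_1^2 - 2\operatorname{Re}(a)q_1 + |a|^2$.

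The idea for extracting this is to exploit that $I$ is a (right) \emph{ideal}, not just a set containing $P$, together with the hypothesis $ab \neq ba$. First I would observe that $R(q_1) = P(q_1,q_2) - (q_2-b)*P_2(q_1,q_2)$ still lies in $I$ provided $(q_2-b)*P_2 \in I$; but this is not automatic, so instead I would work directly with the remainder. The cleaner route: since $P \in I \subseteq E_{(a,b)}$ and $I$ is a right ideal, $P*(q_1-c) \in I \subseteq E_{(a,b)}$ for every $c \in \mathbb{H}$; more usefully, by the one-variable theory the regular conjugate and symmetrization machinery applies to $R$, and $R^s(q_1) = R*R^c$ has real coefficients and vanishes at $a$, hence on all of $\mathbb{S}_a$ — but I must be careful because $R^s \in I$ is again not guaranteed. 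The key point to pin down is therefore: a right ideal $I$ contained in $E_{(a,b)}$ with $ab\neq ba$ must in fact annihilate the sphere in the first slot. The mechanism is that $E_{(a,b)}$ is \emph{not} an ideal precisely because of the noncommutativity (Proposition \ref{idealeono}), so the requirement that $I$ be closed under right $*$-multiplication forces extra vanishing: for $P \in I$ and any $Q$, $P*Q \in I \subseteq E_{(a,b)}$, so $P*Q$ vanishes at $(a,b)$ for \emph{all} $Q$, and by Remark \ref{zeritrasmessi}/\ref{isomorph} this rigidity is exactly what is equivalent to $P$ vanishing on a larger commuting-envelope set, which in this two-variable situation is $\mathbb{S}_a\times\{b\}$.

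Concretely I would argue as follows. Reduce $P$ modulo $(q_2-b)$ to $R(q_1) \in \mathbb{H}[q_1]$ as above. The condition $P*Q \in E_{(a,b)}$ for all $Q \in \mathbb{H}[q_1,q_2]$ then translates, after the same reduction, to: $R*\tilde Q$ vanishes at $q_1 = a$ for all $\tilde Q \in \mathbb{H}[q_1]$ (taking $Q$ independent of $q_2$). Now invoke the relation between $*$-product and pointwise product recalled after the Definition of $*$-product: $(R*\tilde Q)(a) = R(a)\cdot \tilde Q\big(R(a)^{-1} a\, R(a)\big)$ when $R(a) \neq 0$. If $R(a) = 0$ we are in the commuting case to be excluded; if $R$ vanished only at $a$ but not on $\mathbb{S}_a$, then since $R(a)=0$ forces... — here I need the one-variable fact that a regular polynomial vanishing on the sphere $\mathbb{S}_a$ is exactly one divisible by $q_1^2 - 2\operatorname{Re}(a)q_1 + |a|^2$, while one vanishing only at an isolated non-real point $a$ has a zero that is \emph{not} transmitted under right $*$-multiplication. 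Choosing $\tilde Q$ cleverly (e.g. $\tilde Q(q_1) = q_1 - a'$ where $a' = R(a)^{-1}aR(a) \in \mathbb{S}_a\setminus\{a\}$ when $R$ has an isolated zero at $a$) produces an element $R*\tilde Q \in I$ that does \emph{not} vanish at $(a,b)$, contradicting $I \subseteq E_{(a,b)}$. Hence $R$ must vanish on all of $\mathbb{S}_a$, giving $R(q_1) = (q_1^2 - 2\operatorname{Re}(a)q_1 + |a|^2)*P_1(q_1)$ and therefore $P \in E_{\mathbb{S}_a\times\{b\}}$ by Proposition \ref{Sa}.

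The main obstacle I anticipate is the bookkeeping in the reduction step: ensuring that "$P \in I$, $I$ a right ideal" really does pass to "$R \in$ (the image of $I$ under reduction mod $(q_2-b)$), and that image is still a right ideal of $\mathbb{H}[q_1]$." One must check that $*$-dividing by $(q_2-b)$ is compatible with right $*$-multiplication by polynomials in $q_1$ alone (it is, since $q_1$ and $q_2-b$ $*$-commute up to lower-order $q_2$-terms, and $(q_2-b)*\mathbb{H}[q_1,q_2]$ is preserved). Once that is in place, the heart of the argument is the one-variable dichotomy — isolated zero versus spherical zero, and the failure of zero-transmission in the former case — which is precisely the content recalled in Section \ref{sec1} via the $P*Q$ evaluation formula, so no new hard input is needed.
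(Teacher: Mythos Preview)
Your overall strategy---use the right-ideal property to force the one-variable remainder $R(q_1)=P(q_1,b)$ to vanish on all of $\mathbb{S}_a$, then invoke Proposition~\ref{Sa}---is the right one, and it is exactly how the paper proceeds. But your concrete execution has a genuine gap.

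You restrict to test polynomials $Q=\tilde Q(q_1)\in\mathbb{H}[q_1]$ and try to extract new information from $(R*\tilde Q)(a)=0$. This yields nothing: since $R(a)=0$ (which \emph{always} holds here, because $R(q_1)=P(q_1,b)$ and $P(a,b)=0$), the evaluation formula gives $(R*\tilde Q)(a)=R(a)\cdot(\cdots)=0$ automatically for every $\tilde Q$. Zeros of the \emph{left} factor are always transmitted to the $*$-product; it is only zeros of the right factor that may fail to transmit. Consequently your proposed choice $\tilde Q(q_1)=q_1-a'$ with $a'=R(a)^{-1}aR(a)$ is not even well-defined, and the claimed contradiction never materialises. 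The sentence ``if $R(a)=0$ we are in the commuting case to be excluded'' is also off: $R(a)=0$ is not the commuting hypothesis, it is simply the starting point.

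The missing idea is to multiply by something that genuinely interacts with the second variable. The paper takes $Q=q_2$: since $P*q_2=q_2*P$ and $I$ is a right ideal, $(P*q_2)(a,b)=0$; a direct computation (or the evaluation formula applied with $q_2$ as left factor) gives
\[
0=(q_2*P)(a,b)=b\,P(b^{-1}ab,\,b).
\]
Because $ab\neq ba$, the point $b^{-1}ab$ lies in $\mathbb{S}_a\setminus\{a\}$, so $P(\cdot,b)$ has two distinct zeros on $\mathbb{S}_a$ and hence vanishes on the whole sphere by the one-variable theory. This replaces your entire ``reduction mod $(q_2-b)$ plus clever $\tilde Q$'' argument with a single line, and avoids the bookkeeping obstacle you flagged at the end.
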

\begin{proof}
	Let $P\in I$. Then $P(a,b)=0$. Consider now $P*q_2$ which still belongs to $I$. Hence
	\[0=P(q_1,q_2)*{q_2}_{|_{(a,b)}}=q_2*P(q_1,q_2)_{|_{(a,b)}}=q_2P(q_2^{-1}q_1q_2,q_2)_{|_{(a,b)}}=bP(b^{-1}ab,b).\]
	Thus the regular  polynomial $P(\cdot,b)\in \mathbb H[q_1]$ vanishes at $q_1=a$ and at $q_1=b^{-1}ab$ which are two different points on the two sphere $\mathbb{S}_a$. By the one-variable theory (see \cite[Theorem 3.1]{libroGSS}), $P(\cdot,b)$ vanishes on the entire sphere $\mathbb{S}_a$. Hence $P \in E_{\mathbb{S}_a\times \{b\}}$.
\end{proof}
}Propositions \ref{Sa} and \ref{Said}, and \ref{idealisfere} will be
used in the next section in order to study the vanishing set of regular
polynomials in $\mathbb{H}[q_1,q_2];$ the first two results can be
generalised easily to regular polynomials in $n$ variables vanishing on sets
of the form $\mathbb S_{a_1}\times \{a_2\}\times \cdots
\times\{a_n\}$, where $a_la_m=a_ma_l$ for all $l,m$ such that $2\leq
l,m\leq n$.



\section{Nullstellensatz type theorems for regular polynomials}\label{WeNu}

In this Section, we further investigate the relations between zero
sets of regular polynomials and ideals in $\mathbb{H}[q_1,\ldots,q_n]$.  In
the complex setting such correspondence is established by the Hilbert
Nullstellensatz. This result admits two equivalent formulations. The
first one, known as ``Weak Nullstellensatz", states that $I$ is a
proper ideal of $\mathbb{C}[z_1,\ldots,z_n]$ if and only if there
exists a common zero of all  regular polynomials in $I$. 
The second one, the ``Strong Nullstellensatz", is more abstract and
involves the notion of {\em radical} of an ideal. The equivalence of
the two statements deeply relies on the fact that point-evaluation is
a homomorphism (see, e.g., \cite{lang}). As already pointed out in
Remark \ref{valutazione}, this is not the case in the quaternionic
setting. Using a proper strategy, Alon and Paran in
\cite{israeliani-1} prove a Weak and a Strong Nullstellensatz for
quaternionic polynomials with central variables. Thanks to the isomorphism $\varphi$  introduced in \eqref{iso}  it is immediate to rephrase in our setting the Weak Nullstellensatz proven by Alon and Paran \cite[Theorem 1.1]{israeliani-1}.

{

\begin{theorem}[Weak Nullstellensatz for regular polynomials]
	\label{WN}
	Let $I$ be a proper right ideal of
	$\mathbb{H}[q_1,\ldots,q_n]$. Then there exists a point $(a_1,\ldots,a_n)\in
	(\mathbb{C}_J)^n$, for some $J \in \mathbb{S}$, such that every regular polynomial
	in $I$ vanishes at $(a_1,\ldots,a_n)$.
	
\end{theorem}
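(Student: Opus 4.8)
The plan is to transport the statement through the ring isomorphism $\varphi$ of \eqref{iso} and reduce it to the Weak Nullstellensatz of Alon and Paran \cite[Theorem 1.1]{israeliani-1} for $\mathbb{H}[x_1,\ldots,x_n]$, and then translate the resulting common zero back into $\mathbb{H}[q_1,\ldots,q_n]$ using the evaluation identity \eqref{valori}. Since the paper itself already observes that this rephrasing is ``immediate'', the work is essentially bookkeeping with $\varphi$.

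First I would observe that, as $\varphi$ is an additive bijection reversing the order of multiplication, it carries the proper right ideal $I$ onto a proper \emph{left} ideal $\varphi(I)$ of $\mathbb{H}[x_1,\ldots,x_n]$ (properness survives because $\varphi$ is onto). Applying \cite[Theorem 1.1]{israeliani-1} to $\varphi(I)$ yields a point $(b_1,\ldots,b_n)$ at which every polynomial of $\varphi(I)$ vanishes; since elements of $\mathbb{H}[x_1,\ldots,x_n]$ can only be evaluated on $\bigcup_{J\in\mathbb{S}}(\mathbb{C}_J)^n$ (Remark \ref{isomorph}), this common zero lies in $(\mathbb{C}_J)^n$ for some $J\in\mathbb{S}$, and in particular has pairwise commuting components.

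Next I would set $a_\ell:=\overline{b_\ell}$ for $\ell=1,\ldots,n$. Because conjugation is an anti-automorphism of $\mathbb{H}$ that fixes each slice $\mathbb{C}_J$ setwise, the tuple $(a_1,\ldots,a_n)$ again lies in $(\mathbb{C}_J)^n$ and still has pairwise commuting components. Then, for an arbitrary $P\in I$, Formula \eqref{valori} gives
\[
\varphi\bigl(P(a_1,\ldots,a_n)\bigr)=\bigl(\varphi(P)\bigr)(\overline{a_1},\ldots,\overline{a_n})=\bigl(\varphi(P)\bigr)(b_1,\ldots,b_n)=0,
\]
the last equality because $\varphi(P)\in\varphi(I)$ and $(b_1,\ldots,b_n)$ is a common zero of $\varphi(I)$. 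Since $\varphi$ restricts to the injective map $a\mapsto\overline a$ on constants, we conclude $P(a_1,\ldots,a_n)=0$, so every regular polynomial in $I$ vanishes at the same point $(a_1,\ldots,a_n)\in(\mathbb{C}_J)^n$, which is what we want.

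I do not expect a serious obstacle here; the one point that needs care is the interplay with $\varphi$, namely that it reverses the product (so right ideals in $\mathbb{H}[q_1,\ldots,q_n]$ correspond to left ideals in $\mathbb{H}[x_1,\ldots,x_n]$, in accordance with the conventions of \cite{israeliani-1}) and that the evaluation compatibility \eqref{valori} is available only at tuples with commuting components — which is precisely what the Alon--Paran common zero, living in a single slice, provides.
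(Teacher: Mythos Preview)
Your argument is correct and is exactly the ``immediate rephrasing via $\varphi$'' that the paper invokes: transport the proper right ideal to a proper left ideal of $\mathbb{H}[x_1,\ldots,x_n]$, apply \cite[Theorem~1.1]{israeliani-1}, and pull the common zero back through \eqref{valori}. The paper gives no further details, so your write-up simply makes explicit what the authors leave to the reader.
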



%
%

}
To discuss a Strong version of the Nullstellensatz for regular polynomials
we need to introduce some additional notation.
\begin{definition}
	
	\noindent  Given a right ideal $I$ in $\mathbb{H}[q_1,\ldots,q_n]$,
	we define $\mathcal{V}(I)$ to be the set of common zeros of $P\in I$, i.e.,
	if $Z_P\subset \mathbb{H}^n$ represents the zero set of a regular  polynomial $P\in I$, then 
	\[ \mathcal{V}(I):=\bigcap_{P\in I} Z_P.\]
	
	\noindent Furthermore, we set
	
	\[ \mathcal{V}_c(I):=\mathcal{V}(I)\cap\bigcup_{J\in \mathbb{S}}(\mathbb{C}_J)^n.\]

\end{definition}
Notice that $\mathcal V_c(I)$  is contained in $\mathcal{V}(I)$ and  coincides with the set $Z(I)$ introduced in \cite{israeliani-1}.
Recalling the notation introduced in Section \ref{333}, we
        have that a generic point $(a_1,\ldots,a_n)\in \mathbb{H}^n$ belongs to $\mathcal{ V }(I)$ if
        and only if $I\subseteq E_{(a_1,\ldots,a_n)}$.


Let us now investigate some properties of the two sets $\mathcal{V}(I)$ and $\mathcal{V}_c(I)$ when $I$ is a principal ideal.

\begin{proposition}
Let
	$\langle P\rangle$ be the principal right ideal
	generated by $P\in\mathbb{H}[q_1,\ldots,q_n]$. Then	
	\[\mathcal{ V }_c(\langle P \rangle)=Z_P \cap\bigcup_{J\in \mathbb{S}} (\mathbb{C}_J)^n.\]
	
\end{proposition}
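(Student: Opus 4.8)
The plan is to prove the two inclusions separately. The inclusion $\mathcal{V}_c(\langle P\rangle)\subseteq Z_P\cap\bigcup_{J\in\mathbb{S}}(\mathbb{C}_J)^n$ is essentially immediate: since $P\in\langle P\rangle$, every point of $\mathcal{V}(\langle P\rangle)$ lies in $Z_P$, and by definition $\mathcal{V}_c(\langle P\rangle)=\mathcal{V}(\langle P\rangle)\cap\bigcup_{J\in\mathbb{S}}(\mathbb{C}_J)^n$, so every point of $\mathcal{V}_c(\langle P\rangle)$ lies in $Z_P$ and in some $(\mathbb{C}_J)^n$. The content of the statement is therefore the reverse inclusion: if $(a_1,\ldots,a_n)\in(\mathbb{C}_J)^n$ for some $J\in\mathbb{S}$ and $P(a_1,\ldots,a_n)=0$, then every element $P*Q$ of the principal right ideal $\langle P\rangle$ also vanishes at $(a_1,\ldots,a_n)$.

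First I would observe that a point $(a_1,\ldots,a_n)\in(\mathbb{C}_J)^n$ has pairwise commuting components, since all the $a_\ell$ lie in the commutative slice $\mathbb{C}_J$. This is exactly the hypothesis needed to invoke Remark~\ref{zeritrasmessi} (equivalently, the discussion around Proposition~\ref{idealeono}): if a regular polynomial $P$ vanishes at a point with commuting components, then $P*Q$ vanishes there too, for every $Q\in\mathbb{H}[q_1,\ldots,q_n]$. Concretely, by Proposition~\ref{idealemassimale1}, $P$ vanishing at $(a_1,\ldots,a_n)$ with commuting components is equivalent to $P\in\mathcal{I}_{(a_1,\ldots,a_n)}$; since $\mathcal{I}_{(a_1,\ldots,a_n)}$ is a right ideal, $P*Q\in\mathcal{I}_{(a_1,\ldots,a_n)}$ for all $Q$, and then Proposition~\ref{idealemassimale1} again gives $(P*Q)(a_1,\ldots,a_n)=0$. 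Hence $(a_1,\ldots,a_n)\in Z_{P*Q}$ for every generator-multiple $P*Q$, i.e.\ $(a_1,\ldots,a_n)\in\bigcap_{R\in\langle P\rangle}Z_R=\mathcal{V}(\langle P\rangle)$; combined with $(a_1,\ldots,a_n)\in(\mathbb{C}_J)^n$ this yields membership in $\mathcal{V}_c(\langle P\rangle)$.

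Putting the two inclusions together gives the claimed equality. I do not expect a genuine obstacle here: the only subtlety is making sure one uses the commutativity of the components, which is guaranteed precisely because we intersect with $\bigcup_{J\in\mathbb{S}}(\mathbb{C}_J)^n$ — and indeed this is the reason the statement is phrased for $\mathcal{V}_c$ rather than $\mathcal{V}$, since Remark~\ref{zeritrasmessi} and the example following Corollary~\ref{zerisym} show that $P*Q$ need not vanish at a zero of $P$ with non-commuting components. One could also phrase the whole argument via the isomorphism $\varphi$ and formula~\eqref{valori}, transporting the problem to $\mathbb{H}[x_1,\ldots,x_n]$ where point-evaluation at commuting tuples behaves well, but the direct argument above is shorter and self-contained.
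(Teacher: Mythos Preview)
Your proof is correct and follows essentially the same approach as the paper: one inclusion is immediate from $P\in\langle P\rangle$ and the definition of $\mathcal{V}_c$, and the other is obtained by invoking Remark~\ref{zeritrasmessi} (zeros with commuting components are inherited by $P*Q$), which you have additionally unpacked via Proposition~\ref{idealemassimale1}. The paper's proof is more terse but the argument is the same.
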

\begin{proof}
	By definition $\mathcal{ V }_c(\langle P \rangle)\subseteq \bigcup_{J\in \mathbb{S}} (\mathbb{C}_J)^n $ and $\mathcal{ V }_c(\langle P \rangle) \subseteq \mathcal{ V }(\langle P \rangle) \subseteq Z_P$. 
	On the other hand, recalling Remark \ref{zeritrasmessi}, $Z_P \cap\bigcup_{J\in \mathbb{S}} (\mathbb{C}_J)^n \subseteq \mathcal{ V }_c(\langle P \rangle)$ and hence they coincide.
	\end{proof}

\noindent Theorem \ref{WN} guarantees that every regular polynomial has at least a zero with commuting components, i.e, $\mathcal{ V }_c(\langle P \rangle)\neq \varnothing$ for any $P\in \mathbb{H}[q_1,\ldots,q_n]$.
%
%
%

\noindent We give an example of a regular polynomial in $\mathbb{H}[q_1,q_2]$ with all the zeros with commuting components, that is such that $\mathcal{ V }_c(\langle P \rangle)=\mathcal{ V }(\langle P \rangle)$.
\begin{example}
	The regular polynomial
	\[P(q_1,q_2)=q_1q_2-1\]
	is such that, if $J\neq K$, then $Z_P\cap (\mathbb{C}_J\times
	\mathbb{C}_K)=\varnothing;$ indeed $Z_P=\{ (q, q^{-1})
	\ :\ q\in\mathbb{H}\setminus \{0\}\}$, which implies that if
	$P(a,b)=0$ there exists $J\in\mathbb{S}$ such that
	$(a,b)\in\mathbb{C}_J\times \mathbb{C}_J.$

\end{example}
\noindent However, not all regular polynomials have all the zeros with commuting components, not even if they have real coefficients 
\begin{example} The regular polynomial
$Q(q_1,q_2)=q_1^2+q_2^2+2$ vanishes at $(i,j)$ (and actually at any pair $(J_1,J_2)$ with $J_1,J_2\in\mathbb{S}$), which implies that $Z_Q\supsetneq \mathcal{V}_c(\langle Q \rangle).$ Moreover $\mathcal{V}(\langle Q \rangle)=Z_Q$, in fact $$Q(q_1,q_2)*q_1^nq_2^m a=q_1^n Q(q_1,q_2)q_2^m a$$ for any monomial $q_1^nq_2^m a$.
\end{example}
\noindent This gives us an example in which $\mathcal{V}(I)\supsetneq \mathcal{V}_c(I).$\\
\\
\noindent  Since the set of regular polynomials vanishing on a given subset $Z$ of $\mathbb{H}^n$ is not in general an ideal (even if $Z$ consists of a single point, as noted in Proposition \ref{idealeono}), it becomes natural to associate two different ideals with $Z$.
\begin{definition}\label{idealinuovi}
  Let $Z$ be a non-empty subset of $\mathbb{H}^n$.

  \noindent We denote by
                $\mathcal{J}(Z)$  the right ideal generated in $\mathbb{H}[q_1,\ldots,q_n]$
                by regular polynomials which vanish on $Z$,
	\[\mathcal{J}(Z):=\left\{\sum_{k=1}^N P_k*Q_k\ :\ P_k,Q_k\in\mathbb{H}[q_1,\ldots,q_n]\ {\rm with}\ {P_k}_{|_Z}\equiv 0\right\}.\] 
	We denote by  $\mathcal{I}(Z)$ the right ideal 
	\[\mathcal{I}({Z}):=\bigcap_{(a_1,\ldots,a_n)\in Z} \mathcal{I}_{(a_1,\ldots,a_n)}.\]

\end{definition}
Recalling Proposition \ref{idealeono}, note that, in general, neither 
$\mathcal{J}(Z)$ nor $\mathcal{I}({Z})$  coincide with the set of regular polynomials vanishing on $Z$. Let us give an example in two variables.

\begin{example}
Consider the case $Z=\{(i,j)\}$, from Propositions \ref{idealeproprio} 
it follows that 
\[\mathcal{I}(Z)=\mathcal
I_{(i,j)}=\mathbb H[q_1,q_2]\neq E_{(i,j)}.\] 
Moreover, since $q_1-i, q_2-j \in \mathcal{J}(Z)$, we get that $\mathcal{J}(Z)=I_{(i,j)}$ as well.

\end{example}
{
Again by Proposition \ref{idealeono},
given a right ideal $I$ in
$\mathbb{H}[q_1,\ldots,q_n]$, both the ideals $\mathcal{J}(\mathcal{V}_c(I))$
and $\mathcal{I}(\mathcal{V}_c(I))$ coincide with the set of regular
polynomials vanishing on $\mathcal{V}_c(I)$. In particular this yields that we always have the equality
\[\mathcal{J}(\mathcal{V}_c(I))=\mathcal{I}(\mathcal{V}_c(I)).\] 
%
Observe that
\begin{equation}\label{centralvsnon}\mathcal{I}(\mathcal{V}_c(I))
=\bigcap_{(a_1,\ldots,a_n)\in \mathcal{V}_c(I)} \mathcal{I}_{(a_1,\ldots,a_n)}=\bigcap_{(a_1,\ldots,a_n)\in \mathcal{V}(I)} \mathcal{I}_{(a_1,\ldots,a_n)}=\mathcal{I}(\mathcal{V}(I)).
\end{equation}
Moreover,
since $\mathcal{V}_c(I)\subseteq \mathcal{V}(I)$, it turns out that

\begin{equation}\label{inclusioni}
\mathcal{J}(\mathcal{V}(I))\subseteq \mathcal{J}(\mathcal{V}_c(I))= \mathcal{I}(\mathcal{V}_c(I))=\mathcal{I}(\mathcal{V}(I)).
\end{equation}


}

{

\noindent 
To state a version of the Strong Nullstellensatz for regular polynomials, first we recall the following  counterpart of prime ideals in the
noncommutative setting (see \cite{reyes}).
\begin{definition}
	A right ideal $I$ in $\mathbb{H}[q_1,\ldots,q_n]$ is {\em completely prime} if for any $P,Q \in \mathbb{H}[q_1,\ldots,q_n]$ such that $P*Q \in I$ and $P*I\subseteq I$ we have that $P\in I$ or $Q\in I$. 
\end{definition}
The notion of radical of an ideal introduced in \cite{israeliani-1} can be defined also in the setting of regular polynomials.
\begin{definition}
	Let $I$ be a right ideal in $\mathbb{H}[q_1,\ldots,q_n]$. The {\em right radical} $\sqrt{I}$ of $I$ is the intersection of all completely prime right ideals that contain $I$.
\end{definition}
From the isomorphism $\varphi$ introduced in \eqref{iso} and equality \eqref{centralvsnon}, Theorem 4.6 in \cite{israeliani-1},  directly leads to the following version of the Strong Nullstellensatz
for regular polynomials 
\begin{theorem} 
Let $I$ be a right ideal in $\mathbb{H}[q_1,\ldots,q_n]$. Then
\[\mathcal{I}(\mathcal{V}(I))=\sqrt{I}.\] 
\end{theorem}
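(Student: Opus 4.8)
The plan is to deduce the statement from the central-variable Strong Nullstellensatz of Alon and Paran, \cite[Theorem 4.6]{israeliani-1}, by transporting everything through the isomorphism $\varphi$ of \eqref{iso}. The first step is to observe that the left-hand side only ``sees'' the zeros with commuting components: by \eqref{centralvsnon} we have $\mathcal{I}(\mathcal{V}(I))=\mathcal{I}(\mathcal{V}_c(I))$, the reason being that, by Proposition \ref{idealeproprio}(2), $\mathcal{I}_{(a_1,\ldots,a_n)}=\mathbb{H}[q_1,\ldots,q_n]$ at every tuple whose components do not all commute, so such points contribute the whole ring to the intersection of Definition \ref{idealinuovi} and may be discarded. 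Hence it is enough to prove $\mathcal{I}(\mathcal{V}_c(I))=\sqrt{I}$.

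Next I would set up the dictionary across $\varphi$. Since $\varphi$ reverses the order of the $*$-product, it carries the right ideal $I$ to a left ideal $\varphi(I)$ of $\mathbb{H}[x_1,\ldots,x_n]$, it carries right ideals to left ideals and preserves inclusions, and, being a bijection of sets, it commutes with arbitrary intersections. By Remark \ref{idealicorrispondenti}, $\varphi(\mathcal{I}_{(a_1,\ldots,a_n)})=I_{(\overline{a_1},\ldots,\overline{a_n})}$. Using \eqref{valori} (and recalling that $\varphi$ acts as quaternionic conjugation on scalars), for a tuple with commuting components and any $P\in I$ one has $P(a_1,\ldots,a_n)=0$ if and only if $\varphi(P)(\overline{a_1},\ldots,\overline{a_n})=0$; since componentwise conjugation is an involution of $\mathbb{H}^n$ preserving commuting tuples, it carries $\mathcal{V}_c(I)$ onto the common zero set $Z(\varphi(I))$ of $\varphi(I)$ considered in \cite{israeliani-1}. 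Applying $\varphi$ to $\mathcal{I}(\mathcal{V}_c(I))=\bigcap_{(a_1,\ldots,a_n)\in\mathcal{V}_c(I)}\mathcal{I}_{(a_1,\ldots,a_n)}$ therefore gives $\varphi\big(\mathcal{I}(\mathcal{V}_c(I))\big)=\bigcap_{b\in Z(\varphi(I))}I_b$, i.e.\ exactly the ideal appearing on the left of \cite[Theorem 4.6]{israeliani-1}.

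It remains to match the two notions of radical, namely $\varphi(\sqrt{I})=\sqrt{\varphi(I)}$. A direct inspection of the definitions shows that, because $\varphi$ reverses products, a right ideal $\mathfrak{p}\subseteq\mathbb{H}[q_1,\ldots,q_n]$ is completely prime if and only if the left ideal $\varphi(\mathfrak{p})$ is completely prime in the sense of \cite{israeliani-1}; since $\varphi$ is an inclusion-preserving bijection between right and left ideals, it restricts to a bijection between the completely prime right ideals containing $I$ and the completely prime left ideals containing $\varphi(I)$, hence maps the intersection of the former onto the intersection of the latter. Putting these together,
\[
\varphi\big(\mathcal{I}(\mathcal{V}(I))\big)=\varphi\big(\mathcal{I}(\mathcal{V}_c(I))\big)=\bigcap_{b\in Z(\varphi(I))}I_b=\sqrt{\varphi(I)}=\varphi\big(\sqrt{I}\big),
\]
where the third equality is \cite[Theorem 4.6]{israeliani-1}; since $\varphi$ is injective, $\mathcal{I}(\mathcal{V}(I))=\sqrt{I}$.

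I expect the main obstacle to be purely one of matching conventions: checking that ``completely prime'' is genuinely preserved by the order-reversing isomorphism $\varphi$, and that the radical of \cite{israeliani-1} is taken over precisely the family of left ideals that $\varphi$ produces from completely prime right ideals (equivalently, that the left/right swap and the conjugation twist built into $\varphi$ do not alter any of the relevant definitions). Once this is in place, the rest is the bookkeeping assembled from \eqref{centralvsnon}, Remark \ref{idealicorrispondenti} and \eqref{valori}.
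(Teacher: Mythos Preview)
Your proposal is correct and follows exactly the route the paper indicates: reduce to $\mathcal{I}(\mathcal{V}_c(I))$ via \eqref{centralvsnon}, transport through the order-reversing isomorphism $\varphi$ of \eqref{iso} using Remark \ref{idealicorrispondenti} and \eqref{valori}, and invoke \cite[Theorem 4.6]{israeliani-1}. The paper merely asserts that these ingredients yield the result, so your write-up is in fact more detailed than the paper's own treatment; the convention-matching you flag (that $\varphi$ carries completely prime right ideals to completely prime left ideals and hence $\varphi(\sqrt{I})=\sqrt{\varphi(I)}$) is indeed the only point requiring a moment's care, and it goes through.
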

\noindent In the two-variable case, thanks to the better understanding of the
correspondence between zeros of regular polynomials and ideals, we are able to
give a more concrete version of the Strong Nullstellensatz, stated in
terms of the ideal operator $\mathcal J$ instead of $\mathcal I$.
\begin{definition}
	A subset $D\subseteq \mathbb{H}^2$ is called {\em
          $q_1$-symmetric} if for any $(a,b)\in \mathcal D$, the set
        $\mathbb S_a \times \{b\}$ is contained in $D$.
\end{definition}
\begin{proposition}\label{sym3}
Let $I\in \mathbb{H}[q_1,q_2]$ be an ideal such that $\mathcal V(I)$ is $q_1$-symmetric. Then $\mathcal J(\mathcal V_c(I))$ coincides with the ideal of regular polynomials vanishing on  $\mathcal{V}(I)$.
\end{proposition}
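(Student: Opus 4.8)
\textbf{Proof strategy for Proposition \ref{sym3}.}

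The plan is to prove the two inclusions between $\mathcal J(\mathcal V_c(I))$ and the set $E$ of regular polynomials vanishing on $\mathcal V(I)$, exploiting the $q_1$-symmetry hypothesis to transfer vanishing information from $\mathcal V_c(I)$ to all of $\mathcal V(I)$. Recall from \eqref{inclusioni} that $\mathcal J(\mathcal V(I)) \subseteq \mathcal J(\mathcal V_c(I))$, and that $\mathcal J(\mathcal V_c(I)) = \mathcal I(\mathcal V_c(I))$ is precisely the set of regular polynomials vanishing on $\mathcal V_c(I)$. So one inclusion, namely that every polynomial vanishing on $\mathcal V(I)$ lies in $\mathcal J(\mathcal V_c(I))$, is immediate since $\mathcal V_c(I)\subseteq\mathcal V(I)$: such a polynomial vanishes on $\mathcal V_c(I)$, hence lies in $\mathcal J(\mathcal V_c(I))$. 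The real content is the reverse inclusion: every regular polynomial in $\mathcal J(\mathcal V_c(I))$ vanishes on all of $\mathcal V(I)$, not merely on $\mathcal V_c(I)$.

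First I would take an arbitrary $P \in \mathcal J(\mathcal V_c(I)) = \mathcal I(\mathcal V_c(I))$, so $P$ vanishes at every point of $\mathcal V_c(I)$, and I would fix an arbitrary point $(a,b) \in \mathcal V(I)$; the goal is to show $P(a,b)=0$. If $(a,b)$ already has commuting components, then $(a,b)\in \mathcal V_c(I)$ and there is nothing to prove. Otherwise $ab \neq ba$, and here the $q_1$-symmetry of $\mathcal V(I)$ enters: the whole set $\mathbb S_a \times \{b\}$ is contained in $\mathcal V(I)$. The key observation is that this sphere $\mathbb S_a \times \{b\}$ must intersect $\mathcal V_c(I)$: indeed, picking any imaginary unit $J$ with $b \in \mathbb C_J$ (such $J$ exists whether $b$ is real or not), the point $(a',b)$ with $a' := \mathrm{Re}(a) + J|\mathrm{Im}(a)| \in \mathbb S_a \cap \mathbb C_J$ has both components in $\mathbb C_J$, hence commuting, so $(a',b) \in \mathcal V(I) \cap \bigcup_{K\in\mathbb S}(\mathbb C_K)^2 = \mathcal V_c(I)$. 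Therefore $P(a',b) = 0$.

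Now I need to promote $P(a',b)=0$ to $P(a,b)=0$ for every $a \in \mathbb S_a$. The natural route is to reduce to the one-variable theory: the polynomial $Q(q_1) := P(q_1,b) \in \mathbb H[q_1]$ vanishes at the point $a' \in \mathbb S_a$. But a single zero of a one-variable regular polynomial on a sphere does not force the polynomial to vanish on the entire sphere — I need a second zero on $\mathbb S_a$. To obtain it, I would vary $J$: since $\mathbb S_a \times \{b\}\subseteq \mathcal V(I)$ and $\mathcal V(I)$ is $q_1$-symmetric, I can run the argument of the previous paragraph with a different slice. Concretely, pick $J_1 \in \mathbb S$ with $b\in\mathbb C_{J_1}$; then $(\mathrm{Re}(a)+J_1|\mathrm{Im}(a)|, b)\in \mathcal V_c(I)$ so $Q$ vanishes there too. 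If $b\notin\mathbb R$ this only gives the two antipodal points $\pm J_a$-type choices within the one slice $\mathbb C_{J_b}$, which are two \emph{distinct} points of $\mathbb S_a$ (namely $\mathrm{Re}(a)\pm J_b|\mathrm{Im}(a)|$), both lying in $\mathcal V_c(I)$; if $b\in\mathbb R$ then \emph{every} slice works and we trivially get the whole sphere. Either way we obtain two distinct zeros of $Q$ on $\mathbb S_a$, and by the one-variable structure theorem for zero sets \cite[Theorem 3.1]{libroGSS} the polynomial $Q$ vanishes on all of $\mathbb S_a$; in particular $P(a,b) = Q(a) = 0$. Since $(a,b)\in\mathcal V(I)$ was arbitrary, $P$ vanishes on $\mathcal V(I)$, which proves the reverse inclusion and hence the equality.

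\textbf{Main obstacle.} The delicate point is precisely the passage from one zero of $Q$ on the sphere $\mathbb S_a$ to the vanishing of $Q$ on the whole sphere: a priori $\mathcal V_c(I)\cap(\mathbb S_a\times\{b\})$ might consist of a single point, which would be insufficient. The resolution above — using that $\mathrm{Re}(a)+J_b|\mathrm{Im}(a)|$ and $\mathrm{Re}(a)-J_b|\mathrm{Im}(a)|$ are two distinct commuting-component points whenever $a\notin\mathbb R$, and that the real case is trivial — is what makes the argument go through. One should also double-check the edge case $a\in\mathbb R$ separately (then $ab=ba$ automatically, so $(a,b)$ is already in $\mathcal V_c(I)$ and the issue does not arise), and confirm that the hypothesis that $\mathcal V(I)$ is $q_1$-symmetric is genuinely needed here and not automatically satisfied — it is the bridge that guarantees the relevant commuting-component points belong to $\mathcal V(I)$ in the first place.
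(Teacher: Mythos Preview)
Your proposal is correct and follows essentially the same approach as the paper: both arguments use the $q_1$-symmetry of $\mathcal V(I)$ to locate, for each $(a,b)\in\mathcal V(I)\setminus\mathcal V_c(I)$, the conjugate pair $(\mathrm{Re}(a)\pm J_b|\mathrm{Im}(a)|,\,b)\in\mathcal V_c(I)$, and then invoke the one-variable structure theorem to conclude that $P(\cdot,b)$ vanishes on all of $\mathbb S_a$. Your write-up is in fact slightly more detailed than the paper's, as you make the easy inclusion and the edge cases $a\in\mathbb R$, $b\in\mathbb R$ explicit.
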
 
\begin{proof}Let $(a,b)\in \mathcal V(I)\setminus \mathcal V_c(I)$. Since $\mathcal V(I)$ is $q_1$-symmetric, the set $\mathcal V_c(I)$ contains two
  points $(a_1,b)$ and $(\bar{a_1},b)$ in $\mathbb{S}_a \times \{b\}$
  with $a_1b=ba_1$.  Let $P\in \mathcal J(\mathcal V_c(I))$. Then $P$
  vanishes on $\mathcal V_c(I)$ and hence
  $P(a_1,b)=P(\bar{a_1},b)=0$. Since $P(\cdot,b)$ is a regular
  polynomial in $q_1$, thanks to the one-variable theory we get that
  $P(\cdot,b)$ vanishes on the entire $\mathbb S_a$. Therefore
  $P(q_1,q_2)$ vanishes on $\mathbb{S}_a\times \{b\}$, thus on
  $(a,b)$. Since $(a,b)$ is a generic point in $\mathcal{ V }(I)$, we
  have that $P$ vanishes on $\mathcal{ V }(I).$ 
\end{proof}
\begin{corollary}\label{sym1}
Let $I\in \mathbb{H}[q_1,q_2]$ be an ideal such that $\mathcal V(I)$ is $q_1$-symmetric. Then $\mathcal J(\mathcal V(I))=\mathcal J(\mathcal V_c(I))$.
\end{corollary}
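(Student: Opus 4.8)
The plan is to deduce Corollary~\ref{sym1} directly from Proposition~\ref{sym3} together with the chain of inclusions \eqref{inclusioni}. The point is that both $\mathcal J(\mathcal V(I))$ and $\mathcal J(\mathcal V_c(I))$ will be shown to equal the ideal $E$ of all regular polynomials vanishing on $\mathcal V(I)$, hence they coincide with each other.

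First I would recall that, by definition, $\mathcal J(\mathcal V(I))$ is the \emph{ideal generated} by the regular polynomials vanishing on $\mathcal V(I)$, so in particular $\mathcal J(\mathcal V(I))\subseteq E$, where $E$ denotes the ideal of all regular polynomials vanishing on $\mathcal V(I)$ (note $E$ is an ideal here only because we have taken the ideal generated; more precisely one should observe that a polynomial vanishing on $\mathcal V(I)$ lies in $\mathcal J(\mathcal V(I))$ trivially, being one of its generators, so actually $\mathcal J(\mathcal V(I))=E$ follows once we know $\mathcal J(\mathcal V(I))$ consists only of polynomials vanishing on $\mathcal V(I)$). The nontrivial containment is that every element of $\mathcal J(\mathcal V(I))$ vanishes on all of $\mathcal V(I)$: by $q_1$-symmetry of $\mathcal V(I)$, every point of $\mathcal V(I)$ lies on a sphere $\mathbb S_a\times\{b\}\subseteq \mathcal V(I)$, and a generator $P$ of $\mathcal J(\mathcal V(I))$ vanishes there; since the sphere $\mathbb S_a\times\{b\}$ carries two points with commuting components, the argument of Proposition~\ref{sym3} shows that any $*$-multiple $P*Q$ still vanishes on $\mathbb S_a\times\{b\}$ (the spherical zero propagates through the $*$-product, cf.\ Remark~\ref{zeritrasmessi} applied on each such sphere). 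Hence every element of $\mathcal J(\mathcal V(I))$ vanishes on $\mathcal V(I)$, giving $\mathcal J(\mathcal V(I))=E$.

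Next, Proposition~\ref{sym3} gives directly that $\mathcal J(\mathcal V_c(I))=E$ as well, since under the $q_1$-symmetry hypothesis it states precisely that $\mathcal J(\mathcal V_c(I))$ coincides with the ideal of regular polynomials vanishing on $\mathcal V(I)$. Combining the two identities yields
\[
\mathcal J(\mathcal V(I))=E=\mathcal J(\mathcal V_c(I)),
\]
which is the claim. Alternatively, and perhaps more cleanly for the write-up, one can bypass the explicit description of $E$: from \eqref{inclusioni} we already have $\mathcal J(\mathcal V(I))\subseteq \mathcal J(\mathcal V_c(I))$, so only the reverse inclusion is needed. For this, take any $P\in\mathcal J(\mathcal V_c(I))$; Proposition~\ref{sym3} tells us $P$ vanishes on $\mathcal V(I)$, hence $P$ is one of the generators of $\mathcal J(\mathcal V(I))$, so $P\in\mathcal J(\mathcal V(I))$. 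This gives $\mathcal J(\mathcal V_c(I))\subseteq \mathcal J(\mathcal V(I))$ and completes the proof.

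The only subtle point — and the one I would be careful about — is the implicit claim that a polynomial vanishing on $\mathcal V(I)$ automatically belongs to $\mathcal J(\mathcal V(I))$: this is true by construction because $\mathcal J(Z)$ is generated by \emph{all} polynomials vanishing on $Z$, so such a polynomial is literally a generator. Thus no genuine obstacle arises here; the whole corollary is a short formal consequence of Proposition~\ref{sym3} and \eqref{inclusioni}, and the main work has already been done in proving that proposition. I would therefore keep the proof to the two or three lines indicated above.
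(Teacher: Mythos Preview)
Your proposal is correct, and your second (cleaner) argument is exactly the intended one: the paper states Corollary~\ref{sym1} without proof precisely because it is an immediate consequence of Proposition~\ref{sym3} together with the inclusion $\mathcal J(\mathcal V(I))\subseteq \mathcal J(\mathcal V_c(I))$ from \eqref{inclusioni}. Your first approach also works, but the step ``$P*Q$ still vanishes on $\mathbb S_a\times\{b\}$'' is most cleanly justified by citing Proposition~\ref{Said} (which says $E_{\mathbb S_a\times\{b\}}$ is an ideal) rather than Remark~\ref{zeritrasmessi}; in any case you rightly identify the two-line version as the one to keep.
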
 
\begin{proposition} \label{sym2}
	Given an ideal $I\subseteq \mathbb{H}[q_1,q_2]$, either $\mathcal V(I)=\mathcal V_c(I)$, or $\mathcal V(I)$ is $q_1$-symmetric. 
\end{proposition}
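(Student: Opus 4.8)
The plan is to suppose $\mathcal V(I)\neq \mathcal V_c(I)$ and show that then $\mathcal V(I)$ must be $q_1$-symmetric. So let $(a,b)\in \mathcal V(I)\setminus \mathcal V_c(I)$; since $(a,b)$ does not have commuting components, $ab\neq ba$, and in particular $a\notin\mathbb R$. We must show that for \emph{every} $(a',b)\in \mathbb S_a\times\{b\}$ we have $(a',b)\in \mathcal V(I)$, i.e. that every $P\in I$ vanishes at $(a',b)$. The natural tool is Proposition \ref{idealisfere}: since $(a,b)\in\mathcal V(I)$ means precisely $I\subseteq E_{(a,b)}$, and $ab\neq ba$, that proposition gives $I\subseteq E_{\mathbb S_a\times\{b\}}$. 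But $E_{\mathbb S_a\times\{b\}}$ is exactly the set of regular polynomials vanishing on all of $\mathbb S_a\times\{b\}$, so every $P\in I$ vanishes on $\mathbb S_a\times\{b\}$, which is the statement that $\mathbb S_a\times\{b\}\subseteq \mathcal V(I)$.

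To finish, I would argue that this conclusion, applied to a single ``bad'' point, actually forces $q_1$-symmetry globally. Indeed, take any $(c,d)\in\mathcal V(I)$. If $cd=dc$ there is nothing to prove for that point unless $c\notin\mathbb R$ — but wait, even when $cd=dc$ with $c\notin\mathbb R$ we need $\mathbb S_c\times\{d\}\subseteq\mathcal V(I)$. Here is where the hypothesis $\mathcal V(I)\neq\mathcal V_c(I)$ re-enters: once we know $I\subseteq E_{\mathbb S_a\times\{b\}}$ for the specific non-commuting pair $(a,b)$, the ideal $I$ is ``small enough'' — more precisely, Proposition \ref{idealisfere} should be invoked again, or rather we observe that the same mechanism (multiplying a vanishing polynomial by $q_2$ and evaluating, as in the proof of Proposition \ref{idealisfere}) shows that for any $(c,d)\in\mathcal V(I)$ with $c\notin\mathbb R$, the one-variable polynomial $P(\cdot,d)$ vanishes at both $c$ and $d^{-1}cd$. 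When $cd\neq dc$ these are two distinct points of $\mathbb S_c$, so $P(\cdot,d)$ vanishes on all of $\mathbb S_c$ by the one-variable theory (\cite[Theorem 3.1]{libroGSS}), giving $\mathbb S_c\times\{d\}\subseteq\mathcal V(I)$. When $cd=dc$ but $c\notin\mathbb R$, this trick yields nothing directly, so I would instead use the existence of the non-commuting point $(a,b)$ to rule out this configuration, or show it still lies in a $q_1$-symmetric set.

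The main obstacle I anticipate is precisely handling points $(c,d)\in\mathcal V(I)$ with $cd=dc$ but $c$ non-real, in the situation where $\mathcal V(I)$ also contains a non-commuting pair. One clean way around this: note that $q_1$-symmetry of $\mathcal V(I)$ is equivalent to saying $\mathcal V(I)=\bigcup_{(c,d)\in\mathcal V(I)}\mathbb S_c\times\{d\}$. So I would show: if there is \emph{one} pair in $\mathcal V(I)$ with $cd\neq dc$, then by Proposition \ref{idealisfere} $I$ is contained in an ideal $E_{\mathbb S_a\times\{b\}}$ all of whose elements are slice preserving in $q_1$ up to the factor $(q_1^2-2\mathrm{Re}(a)q_1+|a|^2)$ (via Proposition \ref{Sa}); then a direct computation on the generator form $P(q_1,q_2)=(q_1^2-2\mathrm{Re}(a)q_1+|a|^2)*P_1(q_1)+(q_2-b)*P_2(q_1,q_2)$ shows that the first summand vanishes on $\mathbb S_a\times\mathbb H$ and the zero set of any such $P$ restricted to $\{q_2=d\}$ is a union of spheres in $q_1$, hence $q_1$-symmetric. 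Combining these, every point of $\mathcal V(I)$ lies on a sphere $\mathbb S_c\times\{d\}\subseteq\mathcal V(I)$, so $\mathcal V(I)$ is $q_1$-symmetric, completing the proof.
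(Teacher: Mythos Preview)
Your first paragraph \emph{is} the paper's entire proof: take $(a,b)\in\mathcal V(I)\setminus\mathcal V_c(I)$, note that $I\subseteq E_{(a,b)}$ with $ab\neq ba$, apply Proposition~\ref{idealisfere} to obtain $I\subseteq E_{\mathbb S_a\times\{b\}}$, conclude $\mathbb S_a\times\{b\}\subseteq\mathcal V(I)$, and stop.

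Your subsequent worry---that the literal definition of $q_1$-symmetric also demands $\mathbb S_c\times\{d\}\subseteq\mathcal V(I)$ for \emph{commuting} pairs $(c,d)\in\mathcal V(I)$ with $c\notin\mathbb R$---is legitimate, and the paper does not address it either. In fact that stronger claim is false, so your attempts to fill the gap were bound to fail. Take $I=\mathcal I_{(i,2i)}\cap E_{\mathbb S\times\{j\}}$, an intersection of right ideals. Then $(i,2i)\in\mathcal V(I)$ and $(k,j)\in\mathcal V(I)\setminus\mathcal V_c(I)$, but the polynomial
\[
P=(q_2-j)*\bigl(q_1+\tfrac{1}{5}(-3i+4j)\bigr)
\]
lies in $I$ (it vanishes on $\mathbb H\times\{j\}$ by Proposition~\ref{mlineare}, and a direct check gives $P(i,2i)=0$, so $P\in\mathcal I_{(i,2i)}$ by Proposition~\ref{idealemassimale1}), whereas $P(-i,2i)=4+2k\neq 0$. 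Hence $(-i,2i)\notin\mathcal V(I)$ and $\mathcal V(I)$ is not $q_1$-symmetric in the full sense. This same $P$ also refutes your final claim that for polynomials in the form of Proposition~\ref{Sa} the zero set of $P(\cdot,d)$ is a union of spheres in $q_1$. What the paper's proof actually establishes---and all that the downstream Proposition~\ref{sym3} and Theorem~\ref{radJ} need---is the weaker assertion that $\mathbb S_a\times\{b\}\subseteq\mathcal V(I)$ for every \emph{non-commuting} $(a,b)\in\mathcal V(I)$; your first paragraph proves exactly this, and you should end the argument there.
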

\begin{proof} Suppose there exists a point $(a,b)\in \mathcal V(I)\setminus\mathcal V_c(I)$. Since $I\subseteq E_{(a,b)}$ and $ab\neq ba$, thanks to Proposition
\ref{idealisfere}, we get $I\subseteq E_{\mathbb{S}_a\times \{b\}}$ and thus $\mathbb{S}_a\times \{b\}\subseteq \mathcal{V}(I)$.  
\end{proof}

\noindent   The case  $\mathcal V_c(I)=\mathcal V(I)$,
for example, occurs when $I=\langle P\rangle $, with $P=q_1q_2-1$. Since
$Z_P\cap (\mathbb{C}_J\times \mathbb{C}_K)=\varnothing$ for imaginary units $J\neq K$, then every regular 
polynomial in $\langle P \rangle$ vanishes on $Z_P$. Hence
$\mathcal{ V }(\langle P \rangle )=Z_P=\mathcal{ V }_c(\langle P
\rangle )$.

\noindent Combining Proposition \ref{sym3}, Corollary \ref{sym1} and  Proposition \ref{sym2}, we obtain 
\begin{theorem}\label{radJ}
	Let $I$ be  an ideal in $\mathbb{H}[q_1,q_2]$. Then $ \mathcal J(\mathcal V(I))= \mathcal J(\mathcal V_c(I))$; thus $ \mathcal J(\mathcal V(I))=\{P\in \mathbb{H}[q_1,q_2] \ :\ P(q_1,q_2)=0,\ (q_1,q_2)\in \mathcal{V}(I)\}$.

\end{theorem}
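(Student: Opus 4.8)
The plan is to deduce the theorem directly from the dichotomy of Proposition \ref{sym2}, combined with Corollary \ref{sym1} and Proposition \ref{sym3}, by treating the two alternatives separately.

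First I would invoke Proposition \ref{sym2}: for the given ideal $I\subseteq\mathbb{H}[q_1,q_2]$, either $\mathcal V(I)=\mathcal V_c(I)$ or $\mathcal V(I)$ is $q_1$-symmetric. In the former case the equality $\mathcal J(\mathcal V(I))=\mathcal J(\mathcal V_c(I))$ is trivially true; in the latter case it is exactly the statement of Corollary \ref{sym1}. This settles the first assertion in both cases, and it remains only to identify this common ideal with the set of regular polynomials vanishing on $\mathcal V(I)$.

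For the second assertion I would split along the same dichotomy. If $\mathcal V(I)=\mathcal V_c(I)$, then, as already observed (a consequence of Proposition \ref{idealeono}), the ideal $\mathcal J(\mathcal V_c(I))$ coincides with the set of regular polynomials vanishing on $\mathcal V_c(I)=\mathcal V(I)$, and we are done. If instead $\mathcal V(I)$ is $q_1$-symmetric, then Proposition \ref{sym3} asserts precisely that $\mathcal J(\mathcal V_c(I))$ is the ideal of regular polynomials vanishing on $\mathcal V(I)$; together with the first assertion this yields $\mathcal J(\mathcal V(I))=\{P\in\mathbb{H}[q_1,q_2]\ :\ P(q_1,q_2)=0,\ (q_1,q_2)\in\mathcal V(I)\}$.

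I do not expect a genuine obstacle here, since the theorem is essentially an assembly of the preceding results; the only points requiring care are verifying that the two alternatives of Proposition \ref{sym2} are exhaustive and that in each one has the correct description of $\mathcal J(\mathcal V_c(I))$. The truly geometric input — that $q_1$-symmetry of $\mathcal V(I)$ forces a regular polynomial vanishing on the ``central'' part $\mathcal V_c(I)$ to vanish on all of $\mathcal V(I)$, via the one-variable theory on the sphere $\mathbb S_a$ — has already been isolated in Proposition \ref{sym3}, so nothing new needs to be proved.
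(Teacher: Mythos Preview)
Your proposal is correct and follows exactly the paper's approach: the paper simply states that the theorem is obtained by combining Proposition \ref{sym3}, Corollary \ref{sym1} and Proposition \ref{sym2}, and your case split along the dichotomy of Proposition \ref{sym2} spells this out precisely.
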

\noindent Recalling Equation \eqref{inclusioni} we then have the following 
\begin{theorem}[Strong Nullstellensatz in $\mathbb{H}^2$]\label{SN}
Let $I$ be a right ideal in $\mathbb{H}[q_1,q_2]$. Then
\[\mathcal{J}(\mathcal{V}(I))=\sqrt{I}.\] 	
\end{theorem}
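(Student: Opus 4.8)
The plan is to deduce Theorem \ref{SN} by combining the two facts already assembled just before its statement: the abstract Strong Nullstellensatz in $n$ variables (which for $n=2$ reads $\mathcal I(\mathcal V(I))=\sqrt I$) and the two-variable identity $\mathcal J(\mathcal V(I))=\mathcal J(\mathcal V_c(I))$ from Theorem \ref{radJ}. So the whole content of the proof is to show that in $\mathbb{H}[q_1,q_2]$ the ideal $\mathcal J(\mathcal V(I))$ agrees with $\mathcal I(\mathcal V(I))$, after which the equality $\mathcal J(\mathcal V(I))=\sqrt I$ is immediate.

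First I would invoke the chain \eqref{inclusioni}, which gives for free the inclusion $\mathcal J(\mathcal V(I))\subseteq \mathcal J(\mathcal V_c(I))=\mathcal I(\mathcal V_c(I))=\mathcal I(\mathcal V(I))$, valid in any number of variables. Thus only the reverse inclusion $\mathcal I(\mathcal V(I))\subseteq \mathcal J(\mathcal V(I))$ needs work, and this is exactly where the two-variable geometry enters. I would split according to Proposition \ref{sym2}: either $\mathcal V(I)=\mathcal V_c(I)$ or $\mathcal V(I)$ is $q_1$-symmetric. In the first case $\mathcal I(\mathcal V(I))=\mathcal I(\mathcal V_c(I))=\mathcal J(\mathcal V_c(I))=\mathcal J(\mathcal V(I))$ trivially, using the ``always true'' equality recalled for $\mathcal V_c$. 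In the second case, by Proposition \ref{sym3} the ideal $\mathcal J(\mathcal V_c(I))$ equals the set of regular polynomials vanishing on all of $\mathcal V(I)$; but that set obviously contains every generator $q_j-a_j$ of every $\mathcal I_{(a_1,a_2)}$ with $(a_1,a_2)\in\mathcal V(I)$, hence contains the intersection ideal $\mathcal I(\mathcal V(I))$, and by Corollary \ref{sym1} it equals $\mathcal J(\mathcal V(I))$. Either way $\mathcal I(\mathcal V(I))\subseteq \mathcal J(\mathcal V(I))$, closing the loop.

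With $\mathcal J(\mathcal V(I))=\mathcal I(\mathcal V(I))$ in hand, I would simply quote the $n=2$ instance of the abstract Strong Nullstellensatz (the theorem just above stating $\mathcal I(\mathcal V(I))=\sqrt I$, itself obtained from Theorem 4.6 of \cite{israeliani-1} via $\varphi$ and \eqref{centralvsnon}) to conclude $\mathcal J(\mathcal V(I))=\sqrt I$.

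The only real subtlety — and the step I expect to be the main obstacle if one tried to reprove things from scratch — is the identification $\mathcal J(\mathcal V_c(I))=\{P: P|_{\mathcal V(I)}\equiv 0\}$ in the $q_1$-symmetric case, i.e. Proposition \ref{sym3}: one must pass from vanishing at the two commuting-component points on a sphere $\mathbb S_a\times\{b\}$ to vanishing on the whole sphere, using that $P(\cdot,b)$ is a one-variable regular polynomial and the one-variable spherical-zero theorem. But since Proposition \ref{sym3}, Corollary \ref{sym1} and Proposition \ref{sym2} are all already available, the proof of Theorem \ref{SN} itself is a short assembly: apply Proposition \ref{sym2} to dichotomize, use Proposition \ref{sym3}/Corollary \ref{sym1} (or triviality) to get $\mathcal J(\mathcal V(I))=\mathcal I(\mathcal V(I))$, combine with \eqref{inclusioni}, and finish with the abstract Strong Nullstellensatz $\mathcal I(\mathcal V(I))=\sqrt I$.
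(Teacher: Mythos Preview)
Your overall architecture is exactly the paper's: use Theorem \ref{radJ} (equivalently the dichotomy of Proposition \ref{sym2} together with Proposition \ref{sym3} and Corollary \ref{sym1}) to upgrade the inclusion in \eqref{inclusioni} to an equality $\mathcal J(\mathcal V(I))=\mathcal I(\mathcal V(I))$, then apply the abstract Strong Nullstellensatz $\mathcal I(\mathcal V(I))=\sqrt I$. The paper in fact states Theorem \ref{SN} as an immediate consequence of Theorem \ref{radJ} and \eqref{inclusioni}, so your re-derivation of Theorem \ref{radJ} via the case split is harmless but redundant.

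There is, however, one sentence that is genuinely wrong and should be deleted. In the $q_1$-symmetric case you write that the set of polynomials vanishing on $\mathcal V(I)$ ``obviously contains every generator $q_j-a_j$ of every $\mathcal I_{(a_1,a_2)}$ with $(a_1,a_2)\in\mathcal V(I)$, hence contains the intersection ideal $\mathcal I(\mathcal V(I))$''. Neither clause is correct: the polynomial $q_1-a_1$ does \emph{not} vanish on all of $\mathcal V(I)$ (it only vanishes where the first coordinate equals $a_1$), and even if a set did contain generators of each $\mathcal I_{(a_1,a_2)}$, that would say nothing about containing their intersection. Fortunately this step is superfluous: once Corollary \ref{sym1} gives $\mathcal J(\mathcal V(I))=\mathcal J(\mathcal V_c(I))$, the equalities in \eqref{inclusioni} already yield $\mathcal J(\mathcal V(I))=\mathcal I(\mathcal V_c(I))=\mathcal I(\mathcal V(I))$ directly, with no further argument needed. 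Remove that sentence and your proof is correct and matches the paper's.
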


This formulation of the Strong Nullstellensatz has a relevant geometric interpretation since, combining Proposition \ref{sym3} with Theorem  \ref{radJ}, we obtain that $\sqrt I$ coincides with the ideal of regular polynomials vanishing on $\mathcal V(I)$.
}

\medskip
\noindent 
There are several examples suggesting that the equality $ \mathcal J(\mathcal V(I))= \mathcal J(\mathcal V_c(I))$, 
holds also in $\mathbb{H}[q_1,\ldots,q_n]$, with $n>2$. 
This is the key ingredient to give a more geometric interpretation of a version of the Strong Nullstellensatz in several quaternionic variables.

\begin{example}
We here collect examples
in which $\mathcal J(\mathcal V(I))= \mathcal J(\mathcal V_c(I)).$
\begin{enumerate}
\item
	Any ideal  $I$  in $\mathbb{H}[q_1,\ldots q_n]$ such that
	$\mathcal{V}_c(I)=\mathcal{V}(I)$.
	
\item  $I=\langle q_1-a_1\rangle$;
\noindent indeed, $P\in\langle q_1-a_1\rangle$ if and only if $P$
vanishes on $\{a_1\}\times( C_{a_1})^{n-1}=\mathcal{V}_c(\langle q_1-a_1\rangle)$.
Hence $\langle q_1-a_1\rangle=\mathcal{J}(\mathcal{V}_c(\langle
q_1-a_1\rangle))\supseteq\mathcal{J}(\mathcal{V}(\langle
q_1-a_1\rangle))\supseteq \langle q_1-a_1\rangle$.

%
%
%
%

\item $I=\mathcal I_{(a_1,\ldots,a_n)}$:

\noindent  if $a_la_m\neq a_ma_l$ for some $l ,m \in \{1,\ldots, n\}$, then $\mathcal I_{(a_1,\ldots,a_n)}=\mathbb{H}[q_1,\ldots,q_n]$ so $\mathcal{V}(\mathcal I_{(a_1,\ldots,a_n)})=\mathcal{V}_c(\mathcal I_{(a_1,\ldots,a_n)})=\varnothing$.

\noindent If $a_la_m=a_ma_l$ for any $l,m=1,\ldots,n$, then $P\in \mathcal I_{(a_1,\ldots,a_n)}$ if and only if $P$ vanishes at $(a_1,\ldots,a_n)$. Therefore

$\mathcal I_{(a_1,\ldots,a_n)}\supseteq \mathcal{J}(\mathcal{V}_c(\mathcal I_{(a_1,\ldots,a_n)}))\supseteq \mathcal{J}(\mathcal{V}(\mathcal I_{(a_1,\ldots,a_n)}))\supseteq
\mathcal I_{(a_1,\ldots,a_n)}.$

%

  \end{enumerate}
\end{example}

\noindent The authors have in mind to investigate the general statement which might be behind these examples in a forthcoming paper.

\section*{Declarations}
\noindent The authors have no competing interests to declare that are relevant to the content of this article.\\

\end{document}